\documentclass[12pt,letterpaper]{article}
\usepackage{amssymb,amsfonts,color,graphicx, amsmath,mathrsfs,mathrsfs,amsthm,theoremref,natbib}
\usepackage[english]{babel}
\usepackage{hyperref}

\usepackage[abbrev, nobysame]{amsrefs}
\newtheorem{theorem}{Theorem}[section]
\newtheorem{rem}{Remark}[section]

\newtheorem{cor}{Corollary}[section]
\newtheorem{prop}{Proposition}[section]

\numberwithin{equation}{section}

\newcommand{\R}{\mathbb{R}}

\newcommand{\C}{\mathbb{C}}

\def \p {\partial}

\voffset=-3cm
\hoffset=-2.25cm
\textheight=24cm
\textwidth=17.25cm

\title{Partial data inverse problems for magnetic Schr\"odinger operators with potentials of low regularity}

%and advection--diffusion 

\author{Salem Selim \footnote{Department of Mathematics, University of California, Irvine, CA 92697-3875, USA, {\sf selimsa@uci.edu}}}
\date{}

%\author{SALEM SELIM}
%\date{\today}
\begin{document}
\bibliographystyle{amsplain}
\maketitle

\begin{abstract}
We establish a global uniqueness result for an inverse boundary problem with partial data for the magnetic Schr\"odinger operator with a magnetic potential of class $W^{1,n}\cap L^\infty$, and an electric potential of class $L^n$. Our result is an extension, in terms of the regularity of the potentials, of the results \cite{ferreira2006determining} and \cite{KnudsenSalo}. As a consequence, we also show global uniqueness for a partial data inverse boundary problem for the advection--diffusion operator with the advection term of class $W^{1,n}\cap L^\infty$.  
\end{abstract}
\tableofcontents

\section{Introduction and main results}

The purpose of this paper is to study inverse boundary problems with partial data for magnetic Schr\"odinger operators, as well as advection--diffusion operators, with potentials of low regularity.  

Let us start by introducing the problem under consideration in the geometric setting of compact Riemannian manifolds with boundary. To that end, let $(M,g)$ be a smooth compact Riemannian manifold of dimension $n\geq 3$ with smooth boundary $\partial M$. Let $d:C^\infty(M) \to C^\infty(M,T^*M)$ be the de Rham differential, and let $A \in C^\infty(M,T^*M)$ be a $1$--form with complex valued $C^\infty$ coefficients. Then we introduce 
\[
d_A := d +i A : C^\infty(M) \to C^\infty(M, T^*M).
\]
The formal  $L^2$--adjoint $d_A^* : C^\infty(M,T^*M) \to C^\infty(M)$ of $d_A$ is given by 
\[
(d_A u , v)_{L^2(M,T^*M)} = (u,d_A^*v)_{L^2(M)}, \hspace{2mm} u\in C_0^\infty(M^0), \hspace{2mm} v\in C_0^\infty(M^0, T^*M^0),
\]
where $M^0=M\setminus \p M$ stands for the interior of $M$, $(\cdot,\cdot)_{L^2(M)}$ is the $L^2$--scalar product of functions on $M$, and $(\cdot , \cdot)_{L^2(M,T^*M)}$ is the $L^2$--scalar product in the space of $1$--forms on $M$, given by 
\[
(\alpha , \beta)_{L^2(M,T^*M)}=\int_M \langle \alpha , \overline{\beta}\rangle_g dV_g, \quad \alpha,\beta\in C^\infty(M,T^*M). 
\]
Here $\langle \cdot , \cdot \rangle_g$ is the pointwise scalar product in the space of $1$--forms induced by the Riemannian metric $g$, and $dV_g$ is the Riemannian volume element on $M$. In local coordinates $(x^1,\dots, x^n)$ in which $\alpha=\alpha_j dx^j$, $\beta=\beta_j dx^j$ and $(g^{jk})$ is the matrix inverse of 
$(g_{jk})$, $g=g_{jk}dx^jdx^k$, we have
\[
\langle \alpha, \beta\rangle_g=g^{jk}\alpha_j \beta_k. 
\]
Here and in what follows we use Einstein's summation convention. We shall denote by $d^*$ the formal  $L^2$--adjoint of $d$, which in local coordinates is given by 
\begin{equation}
\label{eq_int_1}
d^*v=-|g|^{-\frac{1}{2}}\p_{x_j}(|g|^{\frac{1}{2}}g^{jk}v_k),
\end{equation}
where $|g|=\det(g_{jk})$ and $v=v_jdx^j$.  We shall also need the following expression for $d_A^*$ 
\begin{equation}
\label{eq_int_2}
d_A^*=d^*-i \langle\overline{A}, \cdot\rangle_g.
\end{equation}
From now on let us assume that $A \in (W^{1,n}\cap L^\infty)(M^0,T^*M^0)$ and let $q \in L^n(M,\mathbb{C})$. 
We define the magnetic Schr\"odinger operator by
\begin{equation}
    \label{1.1}
\begin{aligned}
  L_{A,q} u= (d^*_{\overline{A}} d_A+q)u&= -\Delta_g u + id^*(Au) -i \langle A,du \rangle_g + (\langle A,A\rangle_g +q)u\\
  &=-\Delta_g u + i(d^*A)u -2i \langle A,du \rangle_g + (\langle A,A\rangle_g +q)u,
\end{aligned}
\end{equation}
where $u \in H^1(M^0)$. Note that in the second equality in \eqref{1.1} we used \eqref{eq_int_2} while in the last equality we used that 
\begin{equation}
\label{eq_1.1-useful_mag}
d^*(Au)=(d^*A)u -\langle A,du \rangle_g. 
\end{equation}
Let $u \in H^1(M^0)$ be such that 
\begin{equation}
\label{1.2_main}
L_{A,q} u = 0\quad \text{in}\quad \mathcal{D}'(M^0).
\end{equation}
It follows from \eqref{1.1} that $\Delta_g u\in L^2(M)$, and thus the boundary trace $\p_\nu u|_{\p M}\in H^{-\frac{1}{2}}(\p M)$ is well defined, see \cite{Bukhgeim_Uhlmann_2002}. Here and in what follows $\nu$ is the unit outer normal to the boundary of $M$. Proposition \ref{prop_trace_magnetic} in Section \ref{sec_recovery_magnetic} gives that $(\langle A,\nu\rangle_g u)|_{\p M}\in H^{-\frac{1}{2}}(\p M)$. 

Let $\Gamma\subset \p M$ be open non-empty. Associated to $\Gamma$, let us introduce the set of partial Cauchy data for solutions of the magnetic Schr\"odinger equation \eqref{1.2_main},  
\begin{equation}
    \label{1.3}
\begin{aligned}
    C_{A,q}^\Gamma := \{ (u|_{\partial M}, (\p_\nu u+i \langle A,\nu\rangle_g u)|_\Gamma) : u \in H^1(M^0)  \text{ satisfies } L_{A,q}u=0 \text{ in } \mathcal{D}'(M^0) \}\\
    \subset H^{\frac{1}{2}}(\p M)\times H^{-\frac{1}{2}}(\Gamma).
\end{aligned}
\end{equation}

The inverse problem that we are interested in concerns the recovery of the magnetic field $dA$ and the electric potential $q$ in $M$ from the knowledge of the set $C_{A,q}^\Gamma$.  To state our results, we need to introduce some assumptions on the manifold $M$ and the open set $\Gamma$. 

To this end, following \cite{DKSaU_2009}, we assume that the manifold $M$ is admissible,  in the sense that there exists an $(n-1)$-dimensional compact simple manifold $(M_0,g_0)$ such that $M \subset \mathbb{R}\times M_0^0$, with $g = c(e \oplus g_0)$, where $e$ is the Euclidean metric on $\mathbb{R}$ and $0 < c\in C^\infty(M)$. Here the manifold $(M_0,g_0)$ is simple if for any $p \in M_0$, the exponential map $\exp_p$ with its maximal domain of definition in $T_p M_0$ is a diffeomorphism onto $M_0$, and if $\partial M_0$ is strictly convex in the sense that the second fundamental form of $\partial M_0 \hookrightarrow M_0$ is positive definite.

Let $\varphi\in C^\infty(\R\times M_0)$ be given by $\varphi(x) = x_1$, $x=(x_1,x')\in \R\times M_0$. The significance of this function is that it is a limiting Carleman weight on $M$ in the sense of \cite{Kenig_Sjostrand_Uhlmann_2007}, \cite{ferreira2006determining}, \cite{DKSaU_2009}.  We introduce the front side of $\partial M$ as follows,  
\begin{equation}
\label{eq_int_p_M-}
F = \{x \in \partial M : \partial_\nu \varphi(x) \le 0 \}. 
\end{equation}
Our main result is as follows. 
\begin{theorem}
\label{thm1.1}
Let $(M,g)$ be an admissible simply connected manifold of dimension $n\geq 3$ with connected boundary. Let $A_1,A_2 \in (W^{1,n}\cap L^\infty)(M^0,T^*M^0)$ be complex valued 1-forms such that $A_1|_{\p M} = A_2|_{\p M}$ and $q_1,q_2 \in L^n(M,\mathbb{C})$.  Let $\Gamma \subset \partial M$ be an open neighborhood of $F$. If  $C_{A_1,q_1}^{\Gamma}= C_{A_2,q_2}^{\Gamma}$ then  $dA_1 = dA_2$ and $q_1 = q_2$ in $M$. 
\end{theorem}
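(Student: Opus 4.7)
The plan is to adapt the Carleman--CGO framework for admissible manifolds, developed in \cite{ferreira2006determining,DKSaU_2009,KnudsenSalo}, to the low-regularity setting $A\in W^{1,n}\cap L^\infty$, $q\in L^n$. The argument proceeds in three stages: derive a weighted integral identity from the equality of partial Cauchy data, construct complex geometric optics (CGO) solutions of $L_{A_j,q_j}u=0$ with quantitative control as the Carleman parameter $\tau\to\infty$, and then substitute the CGOs to extract ray-transform information on $(M_0,g_0)$ whose inversion, followed by a gauge reduction, yields the conclusion.

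For the integral identity, given $u_1\in H^1(M^0)$ solving $L_{A_1,q_1}u_1=0$, the hypothesis $C^{\Gamma}_{A_1,q_1}=C^{\Gamma}_{A_2,q_2}$ produces $u_2\in H^1(M^0)$ with $L_{A_2,q_2}u_2=0$, matching Dirichlet trace, and matching $(\p_\nu+i\langle A_j,\nu\rangle_g)u_j$ on $\Gamma$. Pairing $u_1-u_2$ against a solution $v$ of the formal adjoint $L_{\overline{A_2},\overline{q_2}}v=0$ and using Green's formula together with \eqref{1.1} produces
\[
\int_M\bigl(-2i\langle A_1-A_2, du_1\rangle_g + i(d^*(A_1-A_2))u_1 + (\langle A_1,A_1\rangle_g - \langle A_2,A_2\rangle_g + q_1-q_2)u_1\bigr)\overline{v}\,dV_g = \mathcal{B},
\]
where the boundary term $\mathcal{B}$ is supported on $\p M\setminus\Gamma\subset\{\p_\nu\varphi>0\}$. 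Following the Bukhgeim--Uhlmann partial data scheme \cite{Kenig_Sjostrand_Uhlmann_2007}, one then takes $u_1,v$ as CGOs with conjugate weights $e^{\mp\tau\varphi}$ so that the boundary Carleman estimate forces $\mathcal{B}\to 0$ as $\tau\to\infty$, leaving only the interior integral.

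The CGO construction is the main obstacle. I would seek $u=e^{-\tau(\varphi+i\psi)}(a+r_\tau)$, where $\varphi(x)=x_1$, $\psi$ is a conjugate weight built from a unit-speed geodesic in $M_0$ as in \cite{DKSaU_2009}, and $a\in C^\infty$ solves the transport equation dictated by the first-order part of $e^{\tau(\varphi+i\psi)}L_{A_j,q_j}e^{-\tau(\varphi+i\psi)}$. The remainder $r_\tau$ must solve an equation in which first-order terms carry $L^\infty$ coefficients and zeroth-order terms carry $L^n$ coefficients, with no gain available from smoothness of $q$. I would handle it by means of the shifted Carleman estimate with gain of \cite{DKSaU_2009}, combined with H\"older's inequality and the Sobolev embedding $W^{1,n}\hookrightarrow L^p$ for all $p<\infty$ (which is precisely where the endpoint regularity $W^{1,n}$ enters), to secure $\|r_\tau\|_{L^2(M)}+\tau^{-1}\|dr_\tau\|_{L^2(M)}=o(1)$. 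Approximating $A,q$ by smooth coefficients, deriving estimates uniformly, and passing to the limit should also be required to give a rigorous meaning to $d^*A$ in \eqref{1.1} under the rough hypothesis.

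Inserting the CGOs into the interior identity and keeping the leading order in $\tau$ produces, for each non-tangential geodesic $\gamma\subset M_0$ and each $\lambda\in\R$, an identity of the form $\int_\R\int_\gamma e^{-i\lambda x_1}F\,ds\,dx_1=0$ with $F$ linear in the tangential components of $A_1-A_2$. Fourier inverting in $x_1$ reduces this to the vanishing of the (attenuated) geodesic ray transform of $F(\cdot,x')$ on the simple manifold $M_0$; its injectivity, together with a Helmholtz-type decomposition and the boundary condition $A_1|_{\p M}=A_2|_{\p M}$, yields $d(A_1-A_2)=0$. Since $M$ is simply connected with connected boundary, there exists $\Psi$ with $\Psi|_{\p M}=0$ and $d\Psi=A_1-A_2$; gauge invariance of $L_{A,q}$ under $A\mapsto A+d\Psi$, $u\mapsto e^{-i\Psi}u$, reduces the problem to $A_1=A_2$, after which a subleading expansion of the same identity combined with injectivity of the unattenuated geodesic ray transform on $M_0$ yields $q_1=q_2$. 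The most delicate point throughout is keeping the errors generated by the rough coefficients strictly below the scale at which the ray-transform data sits.
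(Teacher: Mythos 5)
Your overall architecture --- integral identity from the equality of partial Cauchy data, boundary Carleman estimates to kill the terms on $\partial M\setminus\Gamma$, CGO solutions with the linear weight $\varphi=x_1$, injectivity of the attenuated geodesic ray transform on the transversal simple manifold, and a gauge reduction using simple connectedness and $A_1|_{\partial M}=A_2|_{\partial M}$ to pass from $dA_1=dA_2$ to $q_1=q_2$ --- coincides with the paper's. The gap sits in the CGO construction, which is exactly where the low regularity bites. You posit an amplitude $a\in C^\infty$ solving the transport equation dictated by the first-order part of the conjugated operator; but that equation is a $\overline{\partial}$-equation whose right-hand side is built from the components of $A$ itself, so its solution $\Phi$, and hence $a$, is no smoother than $A$. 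For $A\in(W^{1,n}\cap L^\infty)\setminus C^1$ one then has no pointwise control of $\nabla_g a$ or $\Delta_g a$, which is needed both to estimate the source term in the remainder equation and to feed the CGO traces into the boundary Carleman estimate. Your remedy (``approximate $A,q$ by smooth coefficients \dots and pass to the limit''), aimed at making sense of $d^*A$ (which is already a perfectly good $L^n$ function), misses the actual mechanism: the paper solves a \emph{regularized} transport equation with a mollified $A_\tau$, keeps the mismatch $2ih\langle A-A_\tau,d\rho\rangle_g a$ inside the remainder equation, exploits the quantitative rates $\|A-A_\tau\|_{L^n}=o(\tau)$ and $\|\nabla_g^2A_\tau^\sharp\|_{L^n}=o(\tau^{-1})$ --- which use the full derivative in $L^n$, not the embedding $W^{1,n}\hookrightarrow L^p$ --- and then balances $\tau=\sqrt{h}$ so that the total source is $o(h^{3/2})$ in $L^2$. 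None of this is present in your proposal, and without it the errors generated by the rough magnetic potential are not ``kept below the scale of the ray-transform data.''

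The balancing also governs the boundary terms, and here your quantitative claim is too weak. A remainder bound $\|r\|_{L^2}+\tau^{-1}\|dr\|_{L^2}=o(1)$, i.e.\ $o(1)$ in $H^1_{\mathrm{scl}}$, suffices for the interior identity but not for the partial-data boundary estimate: the contribution of $\partial M\setminus\Gamma$ is controlled through traces of $r$ in $H^{1/2}_{\mathrm{scl}}(\partial M)$ and then $L^{\frac{2(n-1)}{n-2}}(\partial M)$, each step costing a factor $h^{-1/2}$, and with only $o(1)$ in the interior the quantity that must tend to zero comes out as $o(h^{-1/2})$. The paper's remainder bound $\|r\|_{H^1_{\mathrm{scl}}}=o(h^{1/2})$, produced precisely by the choice $\tau=\sqrt{h}$, supplies the extra half power that the trace estimates consume. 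A secondary omission: since the Cauchy data carry $(\partial_\nu+i\langle A_j,\nu\rangle_g)u$ rather than $\partial_\nu u$, the identity contains an additional term $i\int_{\partial M\setminus\Gamma}\langle A_1-A_2,\nu\rangle_g u_1u_2\,dS_g$, whose control requires trace estimates for $W^{1,n}$ one-forms (namely $\langle A,\nu\rangle_g|_{\partial M}\in L^p(\partial M)$ for all finite $p$); your proposal does not account for it. These points are fixable along the lines of the paper, but as written the construction and the decay rates at the core of the argument are not established.
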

\noindent In Theorem \ref{thm1.1} we have $A_j|_{\p M}\in L^p(\p M)$ for all $1\le p<\infty$, $j=1,2$, see Proposition \ref{prop_trace_magnetic_2} in Section \ref{sec_recovery_magnetic} below.

As a consequence of Theorem \ref{thm1.1}, we obtain a global uniqueness result for the partial data inverse problem for the magnetic Schr\"odinger operator on a domain in the Euclidean space. To state the result, let $\Omega\subset \R^n$, $n\ge 3$, be open bounded with $\p \Omega\in C^\infty$. When $A\in (W^{1,n}\cap L^\infty)(\Omega, \C^n)$ and $q\in L^n(\Omega,\C)$, we consider the magnetic Schr\"odinger operator on $\Omega$ given by 
\begin{equation}
    \label{1.1_eclid}
L_{A,q}u = \sum_{j=1}^n(D_{x_j}+A_j)^2u+qu= -\Delta u+ A\cdot Du + D\cdot (Au) + (A^2+q)u,
\end{equation}
where $D = -i \nabla$, and $u\in H^1(\Omega)$. Let $x_0\in \R^n\setminus\overline{{\rm ch}(\Omega)}$, and let $\varphi(x)=\log|x-x_0|$. Here ${\rm ch}(\Omega)$ is the convex hull of $\Omega$. In analogy with \eqref{eq_int_p_M-}, we let 
\begin{equation}
\label{eq_int_p_M-_eclid}
F(x_0) = \{x \in \partial M : \partial_\nu \varphi(x) \le 0 \}.
\end{equation}

\begin{theorem}
\label{thm1.2}
Let $\Omega\subset \R^n$, $n\ge 3$, be open bounded simply connected with connected  $C^\infty$ boundary. Let 
$A_1, A_2\in (W^{1,n}\cap L^\infty)(\Omega, \C^n)$ such that $A_1|_{\p \Omega} = A_2|_{\p \Omega}$ and $q_1,q_2\in L^n(\Omega,\C)$. Let $\Gamma \subset \partial M$ be an open neighborhood of $F(x_0)$. Assume that $C_{A_1,q_1}^{\Gamma}= C_{A_2,q_2}^{\Gamma}$. We have $dA_1 = dA_2$ and $q_1 = q_2$ in $\Omega$. 
\end{theorem}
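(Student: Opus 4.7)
The plan is to derive Theorem \ref{thm1.2} from Theorem \ref{thm1.1} by realizing $(\Omega,e)$, where $e$ denotes the Euclidean metric, as an admissible Riemannian manifold via log-polar coordinates centered at $x_0$. Since $n\geq 3$ and $x_0\notin \overline{\mathrm{ch}(\Omega)}$, the map $\Phi(x)=(\log|x-x_0|,(x-x_0)/|x-x_0|)$ is a smooth diffeomorphism from $\R^n\setminus\{x_0\}$ onto $\R\times S^{n-1}$, and a direct calculation shows that the Euclidean metric pulls back to $e^{2t}(dt^2\oplus g_{S^{n-1}})$. A separating hyperplane argument gives that the angular image $\{(x-x_0)/|x-x_0|:x\in \overline\Omega\}$ is a compact subset of some open hemisphere of $S^{n-1}$, so I would pick $M_0$ to be a closed geodesic ball in $S^{n-1}$ of radius strictly less than $\pi/2$ whose interior $M_0^0$ contains this angular image. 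Such $M_0$ is compact simple (strictly convex since its principal curvatures equal $\cot r>0$, free of conjugate points since it lies inside an open hemisphere) with smooth boundary. Setting $M:=\overline{\Phi(\Omega)}\subset \R\times M_0^0$, $c(t,\omega):=e^{2t}$, $g_0:=g_{S^{n-1}}|_{M_0}$, and $g:=c(e\oplus g_0)$, one obtains an admissible $(M,g)$ isometric to $(\Omega,e)$ via $\Phi$; simple connectivity of $\Omega$ and connectedness of $\p\Omega$ transfer to $M$ since $\Phi|_\Omega$ is a diffeomorphism onto its image.

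Under this identification the limiting Carleman weight $\varphi(x)=\log|x-x_0|$ corresponds to $\tilde\varphi(t,\omega)=t$, the weight used in Theorem \ref{thm1.1}, and since $\Phi$ is an isometry the signs of $\p_\nu\tilde\varphi$ and $\p_\nu\varphi$ agree, giving $\Phi(F(x_0))=F$, so that $\Phi(\Gamma)$ is an open neighborhood of $F$ in $\p M$. I would then push the potentials forward as a $1$-form and a function, setting $\tilde A_j:=\Phi_*A_j$ and $\tilde q_j:=\Phi_*q_j$. Because $\Phi$ is smooth on a neighborhood of $\overline\Omega$ with a smooth, invertible Jacobian that is bounded above and below, the standard change-of-variable formula and the chain rule preserve the classes $(W^{1,n}\cap L^\infty)(M^0,T^*M^0)$ and $L^n(M,\C)$, and the boundary hypothesis $A_1|_{\p\Omega}=A_2|_{\p\Omega}$ transfers to $\tilde A_1|_{\p M}=\tilde A_2|_{\p M}$.

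Finally, since the magnetic Schr\"odinger operator $L_{A,q}=d_A^*d_A+q$ is intrinsic, the isometry $\Phi$ carries $L_{A_j,q_j}$ on $(\Omega,e)$ to $L_{\tilde A_j,\tilde q_j}$ on $(M,g)$ and carries the partial Cauchy data set $C_{A_j,q_j}^\Gamma$ bijectively onto $C_{\tilde A_j,\tilde q_j}^{\Phi(\Gamma)}$, so the hypothesis becomes $C_{\tilde A_1,\tilde q_1}^{\Phi(\Gamma)}=C_{\tilde A_2,\tilde q_2}^{\Phi(\Gamma)}$. Applying Theorem \ref{thm1.1} to $(M,g)$ then yields $d\tilde A_1=d\tilde A_2$ and $\tilde q_1=\tilde q_2$ in $M$, which after pulling back by $\Phi$ gives $dA_1=dA_2$ and $q_1=q_2$ in $\Omega$. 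The only mildly delicate point I anticipate is verifying that the pushforward $\Phi_*$ preserves the low-regularity classes $W^{1,n}\cap L^\infty$ and $L^n$, but this is immediate from the smoothness and uniform nondegeneracy of $\Phi$ on a neighborhood of $\overline\Omega$; the rest of the argument is essentially forced by the isometric identification.
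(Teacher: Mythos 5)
Your proposal is correct and follows essentially the same route as the paper: Section \ref{sec_Thm_1_2} performs exactly this log-polar change of variables $y_1=\log|x|$, $y'=x/|x|$ (after normalizing $x_0=0$ and $\overline{\Omega}\subset\{x_n>0\}$, which is your separating-hyperplane step), identifies $(\Omega,e)$ with an admissible manifold whose transversal factor is a spherical cap, and deduces the result from Theorem \ref{thm1.1}. Your write-up is somewhat more explicit about simplicity of the cap and preservation of the regularity classes under the pushforward, but the argument is the same.
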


\begin{rem}
Theorem  \ref{thm1.1} and Theorem  \ref{thm1.2} can be viewed as extensions of the partial data results of 
\cite{ferreira2006determining}, where the case of $A\in C^2$ and $q\in L^\infty$ was considered, and the result of \cite{KnudsenSalo}, where $A\in C^\varepsilon$, $\varepsilon>0$, and $q\in L^\infty$. 
\end{rem}

\begin{rem}
To the best of our knowledge, Theorem  \ref{thm1.1} and Theorem  \ref{thm1.2} are first partial data results for magnetic Schr\"odinger operators with possibly discontinuous magnetic potentials. 
\end{rem}

Let us next consider an application of Theorem \ref{thm1.1} to partial data inverse problems for  advection-diffusion equations. To this end, given a real vector field $X \in (W^{1,n}\cap L^\infty) (M^0,TM^0)$, we introduce the following Dirichlet problem 
\begin{equation}
\label{eq1.4}
    \begin{cases}
    L_X u := (-\Delta_g +X)u =0 \hspace{3mm} \text{ in } \mathcal{D}'(M^0),\\
    u|_{\partial M} = f \in H^{1/2}(\partial M)
    \end{cases}
\end{equation}
which by \cite{Aubin}*{Chapter 3, Section 8.2} has a unique solution $u \in H^1(M^0)$. Given an open neighborhood $\Gamma$ of $F$  introduced in \eqref{eq_int_p_M-}, we define the partial Dirichlet--to--Neumann map by
\begin{equation}
    \label{eq1.5}
    \Lambda_X^\Gamma : H^{1/2}(\partial M) \to H^{-1/2}(\Gamma), \hspace{5mm} f \to \partial_\nu u|_{\Gamma}.
\end{equation}
We have the following result. 
\begin{theorem}
    \label{thm1.3}
   Let $(M,g)$ be an admissible simply connected manifold of dimension $n\geq 3$ with connected boundary.   Let $X_1,X_2\in (W^{1,n}\cap L^\infty) (M^0, TM^0)$ be real vector fields such that $X_1|_{\p M} = X_2|_{\p M}$. Let $\Gamma \subset \partial M$ be an open neighborhood of $F$ defined in \eqref{eq_int_p_M-}. If  $\Lambda_{X_1}^\Gamma = \Lambda_{X_2}^\Gamma$ then  $X_1 = X_2$ in M.
\end{theorem}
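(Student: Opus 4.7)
The plan is to reduce Theorem \ref{thm1.3} to Theorem \ref{thm1.1} via a standard first--order gauge transformation, and then to recover the vector field from the pair $(dA,q)$ using that $M$ is simply connected with connected boundary. Given a real vector field $X \in (W^{1,n}\cap L^\infty)(M^0, TM^0)$ with metric dual $1$-form $X^\flat$, I set
\[
A_X := \tfrac{i}{2} X^\flat, \qquad q_X := \tfrac{1}{2} d^* X^\flat + \tfrac{1}{4} \langle X^\flat, X^\flat \rangle_g.
\]
Substituting into \eqref{1.1} and using that $-2i\langle A_X,du\rangle_g=Xu$, that $i(d^*A_X)u=-\tfrac{1}{2}(d^*X^\flat)u$, and that $\langle A_X,A_X\rangle_g=-\tfrac{1}{4}\langle X^\flat,X^\flat\rangle_g$, one verifies $L_X=L_{A_X,q_X}$. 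The regularity $X\in W^{1,n}\cap L^\infty$ gives $A_X\in W^{1,n}\cap L^\infty$ and $q_X\in L^n(M,\C)$, while the assumption $X_1|_{\p M}=X_2|_{\p M}$ yields $A_{X_1}|_{\p M}=A_{X_2}|_{\p M}$, so the hypotheses of Theorem \ref{thm1.1} are in place.

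The second step is to translate equality of the partial Dirichlet--to--Neumann maps into equality of partial Cauchy data sets. Under the chosen gauge, the magnetic Neumann datum appearing in \eqref{1.3} becomes
\[
(\p_\nu u + i\langle A_X,\nu\rangle_g u)|_{\p M} = (\p_\nu u - \tfrac{1}{2}\langle X,\nu\rangle_g u)|_{\p M},
\]
whose second summand, restricted to $\Gamma$, depends only on $X|_{\p M}$ and $u|_{\p M}$, and is therefore the same for $X_1$ and $X_2$. Using the well-posedness of \eqref{eq1.4}, each $C_{A_{X_j},q_{X_j}}^\Gamma$ can be identified with the graph of $\Lambda_{X_j}^\Gamma$ modified by this common boundary--determined term, so the hypothesis $\Lambda_{X_1}^\Gamma=\Lambda_{X_2}^\Gamma$ forces $C_{A_{X_1},q_{X_1}}^\Gamma=C_{A_{X_2},q_{X_2}}^\Gamma$. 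Theorem \ref{thm1.1} then yields $d(X_1^\flat-X_2^\flat)=0$ and
\[
\tfrac{1}{2}d^*(X_1^\flat-X_2^\flat)+\tfrac{1}{4}(|X_1|_g^2-|X_2|_g^2)=0 \quad \text{in } M.
\]

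In the final step I would extract $X_1=X_2$ from these two identities. Since $M$ is simply connected and $X_1^\flat-X_2^\flat$ is closed and of class $W^{1,n}\cap L^\infty$, a Poincar\'e lemma in the Sobolev scale produces $\psi\in W^{2,n}(M)$ with $d\psi=X_1^\flat-X_2^\flat$; the condition $X_1|_{\p M}=X_2|_{\p M}$ forces the tangential components of $d\psi$ to vanish on $\p M$, so $\psi|_{\p M}$ is constant (the boundary being connected). Substituting $X_1-X_2=\nabla_g\psi$ and $d^*d\psi=-\Delta_g\psi$ into the identity coming from $q_{X_1}=q_{X_2}$ yields
\[
-\Delta_g\psi+\tfrac{1}{2}(X_1+X_2)\psi=0 \quad \text{in } M,
\]
which, after subtracting the boundary constant, is precisely the Dirichlet problem \eqref{eq1.4} for $L_{\frac{1}{2}(X_1+X_2)}$ with zero data, whose unique solution is trivial; hence $\psi$ is constant and $X_1=X_2$. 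The main technical obstacle I anticipate is step two, namely a careful verification that the gauge equivalence identifies the full magnetic partial Cauchy data set with the twisted graph of $\Lambda_{X_j}^\Gamma$ over arbitrary $H^1(M^0)$ solutions; the remaining ingredients are the Poincar\'e lemma in the $W^{1,n}$ scale and the elliptic uniqueness already invoked in \eqref{eq1.4}.
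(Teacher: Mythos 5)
Your proposal is correct and follows essentially the same route as the paper: the gauge reduction $L_X=L_{A_X,q_X}$ with $A_X=\tfrac{i}{2}X^\flat$, $q_X=\tfrac12 d^*X^\flat+\tfrac14\langle X^\flat,X^\flat\rangle_g$, the identification of the partial Cauchy data sets, an application of Theorem \ref{thm1.1}, and then the Poincar\'e lemma plus uniqueness for the Dirichlet problem \eqref{eq1.4} to conclude $X_1=X_2$. The only cosmetic difference is that the paper also cites a boundary reconstruction result to get $X_1|_\Gamma=X_2|_\Gamma$, which, as you observe, is already supplied by the hypothesis $X_1|_{\p M}=X_2|_{\p M}$.
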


Specializing Theorem \ref{thm1.3} to the case of domains in $\R^n$, we get the following result. 
\begin{theorem}
    \label{thm1.4}
    Let $\Omega\subset \R^n$, $n\ge 3$, be open bounded simply connected with connected  $C^\infty$ boundary, and let 
$X_1, X_2\in (W^{1,n}\cap L^\infty)(\Omega, \R^n)$ such that $X_1|_{\p \Omega} = X_2|_{\p \Omega}$. Let $\varphi(x)=\log|x-x_0|$, where $x_0\in \R^n\setminus\overline{ch(\Omega)}$. Let $\Gamma \subset  \p \Omega$ be an open neighborhood of $F(x_0)$, given in \eqref{eq_int_p_M-_eclid}. If  $\Lambda_{X_1}^\Gamma = \Lambda_{X_2}^\Gamma$ then  $X_1 = X_2$ in $\Omega$.    
\end{theorem}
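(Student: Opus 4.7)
The plan is to reduce the advection--diffusion problem to the magnetic Schr\"odinger problem already settled by Theorem \ref{thm1.2}, by absorbing the first--order term into an imaginary magnetic potential. For $j=1,2$, I would set
\[
A_j := \tfrac{i}{2} X_j \in (W^{1,n}\cap L^\infty)(\Omega,\mathbb{C}^n), \qquad q_j := \tfrac{1}{4}|X_j|^2 - \tfrac{1}{2}\nabla\cdot X_j \in L^n(\Omega,\mathbb{C}).
\]
A direct expansion of \eqref{1.1_eclid} shows $L_{A_j,q_j}=L_{X_j}$. The regularity of $q_j$ follows because $X_j\in L^\infty$ gives $|X_j|^2\in L^\infty \subset L^n$ on the bounded domain $\Omega$, while $X_j\in W^{1,n}$ gives $\nabla\cdot X_j\in L^n$. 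The hypothesis $X_1|_{\p\Omega}=X_2|_{\p\Omega}$ immediately gives $A_1|_{\p\Omega}=A_2|_{\p\Omega}$.

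Next I would translate the hypothesis $\Lambda_{X_1}^\Gamma=\Lambda_{X_2}^\Gamma$ into equality of the partial magnetic Cauchy data sets. The relevant boundary operator is
$\p_\nu u+i\langle A_j,\nu\rangle u = \p_\nu u - \tfrac{1}{2}\langle X_j,\nu\rangle u$
on $\Gamma$. For any Dirichlet datum $f\in H^{1/2}(\p\Omega)$, the solutions $u_j$ of $L_{X_j}u_j=0$ with $u_j|_{\p\Omega}=f$ satisfy $\p_\nu u_1|_\Gamma=\p_\nu u_2|_\Gamma$ by hypothesis, and $\langle X_1,\nu\rangle u_1 = \langle X_2,\nu\rangle u_2$ on $\Gamma$ by the boundary trace condition, so $C_{A_1,q_1}^\Gamma = C_{A_2,q_2}^\Gamma$. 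Theorem \ref{thm1.2} then yields $d(X_1-X_2)=0$ as a $1$--form on $\Omega$, together with the pointwise identity $\tfrac{1}{4}(|X_1|^2-|X_2|^2)=\tfrac{1}{2}\nabla\cdot(X_1-X_2)$.

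Simple connectedness of $\Omega$ provides $\phi\in W^{2,n}(\Omega)$ with $X_1-X_2=\nabla\phi$. The trace condition $\nabla\phi|_{\p\Omega}=0$ together with the connectedness of $\p\Omega$ shows $\phi|_{\p\Omega}$ is a constant, which I normalize to zero. Substituting $X_1=X_2+\nabla\phi$ into the second identity produces
\[
\Delta\phi - X_2\cdot\nabla\phi = \tfrac{1}{2}|\nabla\phi|^2, \qquad \phi|_{\p\Omega}=0,
\]
which the Hopf--Cole substitution $w:=e^{-\phi/2}$ linearizes to $L_{X_2}w=0$ with $w|_{\p\Omega}=1$. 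Since $w\equiv 1$ is a solution of the same Dirichlet problem, uniqueness of solutions to \eqref{eq1.4} forces $w\equiv 1$, hence $\phi\equiv 0$ and $X_1=X_2$ in $\Omega$.

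The conceptual content sits entirely in Theorem \ref{thm1.2}, so the work is the reduction and the final linearization. The main point requiring care, given the low regularity, is the bookkeeping verifying that the magnetic Cauchy data sets really coincide as sets (rather than merely one inclusion) and that $\phi\in W^{2,n}\hookrightarrow C^1$ is sufficient to justify the Hopf--Cole computation and to place $w$ in the $H^1$ solution class of \eqref{eq1.4}; these are routine once Theorem \ref{thm1.2} is available.
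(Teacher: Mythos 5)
Your argument is correct, but it is organized differently from the paper's. The paper disposes of Theorem \ref{thm1.4} by the same reduction used for Theorem \ref{thm1.2}: the change of variables $y_1=\log|x-x_0|$, $y'=(x-x_0)/|x-x_0|$ turns $(\overline{\Omega},e)$ into an admissible manifold with the logarithmic weight becoming linear, after which Theorem \ref{thm1.3} applies verbatim. You instead stay in the Euclidean setting, perform the advection--to--magnetic reduction $A_j=\tfrac{i}{2}X_j$, $q_j=\tfrac14|X_j|^2-\tfrac12\nabla\cdot X_j$ there, invoke Theorem \ref{thm1.2} as a black box, and then run the gauge argument. The two routes commute (Theorem \ref{thm1.3} is itself proved by exactly this reduction on the manifold), so nothing is lost; your version has the mild advantage of not re-deriving the change of variables, at the cost of redoing the endgame of Section \ref{sec_Thm_1_3} in $\Omega$. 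Your endgame also differs in a pleasant way: where the paper writes the resulting equation as $\Delta_g\phi-V(\phi)=0$ with a $\phi$-dependent drift $V$ and quotes the maximum principle, you linearize exactly via the Hopf--Cole substitution $w=e^{-\phi/2}$ and conclude from uniqueness of the Dirichlet problem \eqref{eq1.4}; these are morally equivalent, since that uniqueness is itself a maximum-principle statement. Two small points of bookkeeping. First, the identification of the Cauchy data sets is indeed a two-sided equality because both sets are graphs over all of $H^{1/2}(\partial\Omega)$ by unique solvability of \eqref{eq1.4}, so your one-line argument suffices. Second, the parenthetical $W^{2,n}\hookrightarrow C^1$ is false as an embedding ($W^{2,n}$ only embeds into $W^{1,p}$ for $p<\infty$); it is harmless here because $\nabla\phi=X_1-X_2\in (W^{1,n}\cap L^\infty)(\Omega,\R^n)$ directly, which is what the chain-rule computation for $w$ and the membership $w\in H^1(\Omega)$ actually require. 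I would phrase that step via the hypothesis on $X_1-X_2$ rather than via a Sobolev embedding.
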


\begin{rem}
Theorem  \ref{thm1.3} and Theorem  \ref{thm1.4} can be viewed as extensions of the partial data result of \cite{KnudsenSalo}, where the advection term satisfies $X\in C^\varepsilon$, $\varepsilon>0$, and $\nabla\cdot X\in L^\infty$. 
\end{rem}

The study of inverse boundary problems for magnetic Schr\"odinger operators has a long tradition in inverse problems. Let us proceed to recall some of the fundamental contributions in dimension $n \geq 3$, first in the full data case, for domains in the Euclidean space. Following the fundamental paper \cite{Sylvester_Uhlmann_1987} when $A=0$, a global uniqueness result in the presence of a $C^\infty$ magnetic potential was established in \cite{Nakamura1995GlobalIF}, see also \cite{sun}. Regularity assumptions on the magnetic potential were subsequently weakened in the works  \cite{Tolmasky},  \cite{Panchenko_2002}, \cite{Salo2004InversePF}. The sharpest currently available result in terms of the regularity of the magnetic potential was obtained in \cite{UhlmannKatya2014} for $A\in L^\infty$. We refer to \cite{Haberman_2018} for an extension of this result in the three dimensional case, under a suitable smallness assumption, and to \cite{Chanillo_1990}, \cite{Lavine_Nachman} for the case when $A=0$, $q\in L^{\frac{n}{2}}$. Turning the attention to the setting of admissible manifolds as well as its generalization when the transversal manifold is no longer simple, still in the full data case, we refer to  the works \cite{DKSaU_2009}, \cite{unboundedPotential}, \cite{DSKurLassSalo_2016}, \cite{Cekic_2017}, \cite{2018}. In particular, the paper \cite{2018} showed a global uniqueness result for $A\in L^\infty$ in the setting of admissible manifolds. 

The study of partial data inverse problems for Schr\"odinger operators in dimension $n\geq 3$, for domains in the Euclidean space, was initiated in the pioneering works \cite{Bukhgeim_Uhlmann_2002}, \cite{Kenig_Sjostrand_Uhlmann_2007} when $A=0$ and $q\in L^\infty$. We refer to \cite{Chung_Tzou_2020}, \cite{Tzou_preprint} for extensions of these results to $q\in L^{\frac{n}{2}}$, while still $A=0$. The magnetic case was then treated in \cite{ferreira2006determining} for $A\in C^2$ and $q\in L^\infty$,  see \cite{PMRuizT_2022} for the corresponding stability result, and see also \cite{FrancisChung}, \cite{Chung_2014}.  In \cite{KnudsenSalo}, the regularity of magnetic potentials was relaxed to the H\"older continuity. In the context of admissible manifolds, partial data inverse problems were studied in \cite{Kenig_Salo_2013}, \cite{Bhattacharyya_2018}.  We refer to the survey papers \cite{Uhlmann_2014_review} and \cite{Kenig_Salo_2014_review} for a fuller account of the work done on partial data inverse problems and for additional references. 

Inverse boundary problems for advection--diffusion equations in the full data case were studied in \cite{Salo2004InversePF}, \cite{Cheng_Nakamura_Somersalo_2001}, \cite{Pohjola_2015}, for domains in the Euclidean space, and in  \cite{krupchykManifold} for admissible manifolds. An extension of the partial data result of \cite{ferreira2006determining} to the case of advection--diffusion equations was obtained in \cite{KnudsenSalo} for the advection term satisfying the conditions $X\in C^\varepsilon$, $\varepsilon>0$, and $\nabla\cdot X\in L^\infty$. 

Let us proceed to discuss the main ideas in the proofs of our results, starting with Theorem  \ref{thm1.1}. Following the tradition of works on partial data inverse boundary problems, going back to \cite{Bukhgeim_Uhlmann_2002}, \cite{Kenig_Sjostrand_Uhlmann_2007},  \cite{ferreira2006determining}, to establish Theorem  \ref{thm1.1}, we rely on complex geometric  optics (CGO) solutions to the magnetic Schr\"odinger equations, as well as on Carleman estimates with boundary terms for the magnetic Schr\"odinger operator.  To construct CGO solutions to the magnetic Schr\"odinger equation $L_{A,q}u=0$ with $A \in (W^{1,n}\cap L^\infty)(M^0,T^*M^0)$ and $q \in L^n(M,\mathbb{C})$, with sufficient decay of the remainder term, we make use of a smoothing argument, approximating $A$ by a sequence of smooth one forms $A_\tau$ such that 
\[
\|A-A_\tau\|_{L^n}=o(\tau), \quad \|\nabla_g^2A_\tau^\sharp\|_{L^n}=o(\tau^{-1}),
\]
as $\tau\to 0$. Here $A^\sharp_\tau=\sum_{j,k=1}^n g^{jk}(A_\tau)_k\p_{x_j}$ is the vector field, corresponding to the one form $A_\tau$. Working with the limiting Carleman weight $\varphi(x) = x_1$, the recovery of the magnetic field and electric potential is performed by making use of results related to the injectivity of the attenuated ray transform, as in the full data case discussed in the works \cite{DKSaU_2009}, \cite{unboundedPotential}, \cite{2018}. 

To establish the partial data result with logarithmic weights of Theorem \ref{thm1.2}, rather than relying on techniques of analytic microlocal analysis as in the works \cite{Kenig_Sjostrand_Uhlmann_2007}, \cite{ferreira2006determining}, \cite{KnudsenSalo}, we make use of a change of variables  as in \cite{Kenig_Salo_2013}, \cite{caseyRodriguez},  allowing us to view the domain $\Omega$ as an admissible manifold, with the logarithmic weight becoming a linear one in the new coordinates. Theorem \ref{thm1.2} becomes then a direct consequence of Theorem  \ref{thm1.1}. 

To be on par with the best available full data result, one would next like to try to establish a partial data result for magnetic potentials of class $L^\infty$ only. Let us point out one of the main challenges in dealing with such a problem. In our case, when working with $A\in (W^{1,n}\cap L^\infty)(M^0,T^*M^0)$ and $q\in L^n(M,\C)$, we have the following mapping property, 
\[
L_{A,q}-(-\Delta_g): H^1(M^0)\to L^2(M). 
\] 
On the other hand, assuming that $A$ is merely of class $L^\infty(M,T^*M)$, we have in general the much weaker mapping property
\[
L_{A,q}-(-\Delta_g): H^1(M^0)\to H^{-1}(M^0), 
\] 
leading to some difficulties when trying to apply the techniques of boundary Carleman estimates. 

The plan of the paper is as follows. Section \ref{sec_Carleman} is devoted to Carleman estimates for magnetic Schr\"odinger operators with boundary terms and its consequences. Section \ref{sec_CGO} presents a construction of CGO solutions for the magnetic Schr\"odinger equation with $A \in (W^{1,n}\cap L^\infty)(M^0,T^*M^0)$ and $q \in L^n(M,\mathbb{C})$. The proof of Theorem \ref{thm1.1} is contained in Sections \ref{sec_recovery_magnetic} and \ref{sec_recovery_electric}. Theorem  \ref{thm1.2} is established in Section \ref{sec_Thm_1_2}. Section \ref{sec_Thm_1_3} is devoted to the proof of Theorem \ref{thm1.3}. The proof of Theorem \ref{thm1.4} will be omitted as it follows along the same lines as that of Theorem  \ref{thm1.2}. Appendix \ref{sec_app} contains some regularization estimates for Sobolev functions needed in the construction of CGO solutions.

\section{Carleman estimates}

\label{sec_Carleman}

Let $(M,g)$ be a compact smooth Riemaniann manifold of dimension $n\ge 3$ with smooth boundary. Assume that $(M,g)$ is CTA so that 
\[
(M,g)\subset\subset (\R\times M_0^0, g),\quad g=c(e\oplus g_0).
\]
Here $(M_0,g_0)$ is a compact $(n-1)$--dimensional manifold with boundary, $e$ is the Euclidian metric on $\R$, and $0<c\in C^\infty(M)$. We also write, here and in what follows, $X^0:=X\setminus \p X$ for the interior of a compact  smooth manifold with boundary $X$.  We define the semiclassical Sobolev norm of a function $v$ by $$\|v\|_{H^1_{scl}(M^0)}^2 = \|h\nabla v\|^2+\|v\|^2,$$ where we abbreviate the $L^2(M)$ norm by $\|\cdot\|$, and the corresponding $L^2$ scalar product by $(\cdot,\cdot)$. 

\bigskip
The following result is an extension of \cite{ferreira2006determining}*{Proposition 2.3} from the Euclidean setting to that of Riemannian manifolds, see also \cite{KnudsenSalo}*{Proposition 4.1}.  We refer to \cite{Kenig_Salo_2013}*{Proposition 4.1} for closely related Carleman estimates with boundary terms on CTA manifolds.

\begin{prop}
\label{prop2.1}
    Let $\varphi(x)=\pm x_1$ and let $\Tilde{\varphi} = \varphi + \frac{h}{2\epsilon}\varphi^2$. Then for all $0 < h \ll \epsilon \ll 1$ we have
    \begin{equation}
    \label{2.1}
        \|e^{\Tilde{\varphi} /h} \circ (-h^2\Delta_g) \circ e^{-\Tilde{\varphi} /h} u \|^2 \geq \frac{1}{C}\frac{h^2}{\epsilon}\|u\|_{H^1_{scl}(M^0)}^2-2h^3\int_{\partial M}(\partial_\nu \Tilde{\varphi})|\partial_\nu u|^2 dS_g, \hspace{5mm} C>0,
    \end{equation}
    for all $u\in H^2(M^0)\cap H_0^1(M^0)$. 
\end{prop}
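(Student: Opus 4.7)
The plan is to adapt the standard conjugate-and-square approach to the CTA setting, with the term $\frac{h}{2\epsilon}\varphi^2$ playing the role of a H\"ormander convexification that produces positivity in the relevant commutator. First I would conjugate and set $P_{\tilde\varphi} := e^{\tilde\varphi/h}(-h^2\Delta_g)e^{-\tilde\varphi/h}$, so that a direct computation yields
$$P_{\tilde\varphi} = -h^2\Delta_g - |\nabla_g\tilde\varphi|_g^2 + 2h\langle\nabla_g\tilde\varphi,\nabla_g\,\cdot\,\rangle_g + h(\Delta_g\tilde\varphi).$$
I would then decompose $P_{\tilde\varphi}=A+iB$ into its formally self-adjoint real and imaginary parts
$$A = -h^2\Delta_g - |\nabla_g\tilde\varphi|_g^2, \qquad iB = 2h\langle\nabla_g\tilde\varphi,\nabla_g\,\cdot\,\rangle_g + h(\Delta_g\tilde\varphi),$$
using the identity $X+X^*=-\mathrm{div}\,X$ for a vector field $X$ to verify that $B$ is genuinely self-adjoint.

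Next, since the Dirichlet Laplacian $-h^2\Delta_g$ is self-adjoint on $H^2(M^0)\cap H_0^1(M^0)$, for any such $u$ I would expand
$$\|P_{\tilde\varphi}u\|^2 = \|Au\|^2 + \|Bu\|^2 + i([A,B]u,u).$$
The commutator $i[A,B]$ is then computed explicitly; its $L^2$-pairing against $u$ is rewritten as a quadratic form in $(u,h\nabla_g u)$ via a further integration by parts, which generates a boundary contribution. Using that $u|_{\partial M}=0$ forces $\nabla_g u|_{\partial M}=(\partial_\nu u)\nu$, every such boundary term reduces to an integral over $\partial M$ of $|\partial_\nu u|^2$ multiplied by a function of $\tilde\varphi$ and its derivatives; a careful accounting---along the lines of \cite{ferreira2006determining}*{Proposition 2.3} in the Euclidean case and \cite{Kenig_Salo_2013}*{Proposition 4.1} in the CTA case---should collapse this contribution into exactly $-2h^3\int_{\partial M}(\partial_\nu\tilde\varphi)|\partial_\nu u|^2\,dS_g$.

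The remaining step is the interior lower bound on the commutator. Because $\varphi=\pm x_1$ is a limiting Carleman weight on $(M,g)$, the Poisson bracket of the principal symbols of the unperturbed $A_\varphi$ and $B_\varphi$ vanishes on their joint characteristic set, so the unperturbed commutator alone only yields a degenerate estimate. The convexification $\tilde\varphi=\varphi+\frac{h}{2\epsilon}\varphi^2$ is added precisely to produce a strictly positive perturbation of size $h/\epsilon$: using that on a CTA manifold $|\nabla_g\varphi|_g^2$ and $\Delta_g\varphi$ are bounded uniformly in terms of $g_0$ and $c$, a direct computation should give
$$i([A,B]u,u) \;\geq\; \frac{h^3}{C\epsilon}\bigl(\|h\nabla_g u\|^2 + \|u\|^2\bigr) \;-\; 2h^3\int_{\partial M}(\partial_\nu\tilde\varphi)|\partial_\nu u|^2\,dS_g,$$
which combined with $\|Au\|^2+\|Bu\|^2\geq 0$ yields \eqref{2.1}. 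I expect the main obstacle to be the careful bookkeeping of the commutator expansion and its boundary integration by parts---in particular tracking the small parameter $h/\epsilon$ generated by the convexification through every term, and verifying that in the CTA setting all tangential contributions cancel so that only the normal component $\partial_\nu u$ survives at the boundary.
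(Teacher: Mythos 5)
Your overall strategy---conjugate, split $P_{\tilde\varphi}=A+iB$ into formally self-adjoint parts, and extract positivity from the commutator of the convexified weight---is the same as the paper's, but two steps in your assembly do not work as described. First, the identity $\|P_{\tilde\varphi}u\|^2=\|Au\|^2+\|Bu\|^2+i([A,B]u,u)$ is false for $u\in H^2(M^0)\cap H^1_0(M^0)$ with $\partial_\nu u\neq 0$: the cross term $(Bu,Au)$ requires Green's formula for the \emph{second-order} operator $A$, which leaves the boundary contribution $-h^2\int_{\partial M}(\partial_\nu\overline{u})(Bu)\,dS_g$; since $u|_{\partial M}=0$ forces $Bu|_{\partial M}=-2ih(\partial_\nu\tilde\varphi)(\partial_\nu u)$, this is exactly the term $-2h^3\int_{\partial M}(\partial_\nu\tilde\varphi)|\partial_\nu u|^2\,dS_g$ in \eqref{2.1}. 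It does \emph{not} arise from a ``further integration by parts'' of the commutator pairing: the only commutator term one integrates by parts is $h(B\beta Bu,u)=h(\beta Bu,Bu)$, and that produces no boundary term because $B$ is first order and $u|_{\partial M}=0$. Your bookkeeping lands on the right boundary integral, but the derivation you sketch would not produce it.

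Second, and more seriously, the commutator alone cannot deliver the full $H^1_{scl}$ norm. On a CTA manifold one has (see \cite{DKSaU_2009}) $i[A,B]=\frac{4h^2}{\epsilon}\bigl(1+\frac{h}{\epsilon}\varphi\bigr)^2+hB\beta B+h^2R$, where the genuinely positive part is zeroth order (it yields only $\frac{2h^2}{\epsilon}\|u\|^2$), the $B\beta B$ part controls only the $\partial_{x_1}$-direction, and $R$ is an indefinite first-order error. Hence your claimed bound $i([A,B]u,u)\geq\frac{h^3}{C\epsilon}\bigl(\|h\nabla_g u\|^2+\|u\|^2\bigr)-(\text{boundary})$ fails---and in any case the prefactor would have to be $h^2/\epsilon$, not $h^3/\epsilon$, to match \eqref{2.1}. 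The gradient term must instead be recovered from $\|Au\|^2$, via $\|h\nabla_g u\|^2=(Au,u)+(|\nabla_g\tilde\varphi|_g^2u,u)\le\|Au\|\,\|u\|+C\|u\|^2$, sacrificing an $O(h^2/\epsilon)$ fraction of $\|Au\|^2$; you cannot simply discard $\|Au\|^2+\|Bu\|^2$ as ``$\geq 0$''. With these two corrections your outline becomes the paper's proof.
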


\begin{proof}
 Defining 
 \[
  P_{\Tilde{\varphi}} := e^{\Tilde{\varphi} /h} \circ (-h^2\Delta_g) \circ e^{-\Tilde{\varphi} /h},
 \] 
 we get
 \[
   P_{\Tilde{\varphi}}   = -h^2 \Delta_g -|\nabla_g \Tilde{\varphi}|_g^2 +2 \langle \nabla_g\Tilde{\varphi},h \nabla_g \cdot\rangle_g + h (\Delta_g \Tilde{\varphi})= \Tilde{A}+i\Tilde{B}.
 \]
 Here $\Tilde{A}$ and $\Tilde{B}$ are formally self-adjoint operators on $L^2(M)$ given by
\begin{align*}
    \Tilde{A} &:= -h^2 \Delta_g - |\nabla_g \Tilde{\varphi}|_g^2\\
    \Tilde{B} &:= -2i \langle \nabla_g\Tilde{\varphi},h\nabla_g \cdot \rangle_g -ih (\Delta_g \Tilde{\varphi}).
\end{align*}
By density, it suffices to assume $u \in C^\infty(M^0)$ with $u|_{\partial M} =0$. Now note that
\begin{equation}
\label{2.2}
\begin{split}
    \|P_{\Tilde{\varphi}}u \|^2 &= (P_{\Tilde{\varphi}}u, P_{\Tilde{\varphi}}u)= ((\Tilde{A}+i\Tilde{B})u, (\Tilde{A}+i\Tilde{B})u)\\
    &= \|\Tilde{A}u\|^2 + \|\Tilde{B}u\|^2 - i(\Tilde{A}u,\Tilde{B}u) +i (\Tilde{B}u,\Tilde{A}u).
    \end{split}
\end{equation}
First we look at the term $(\Tilde{A}u,\Tilde{B}u)$. Since $\Tilde{B}$ is a first order differential operator and formally self-adjoint, we get by integration by parts and the fact that $u|_{\partial M} =0$, that 
\begin{equation}
\label{2.3}
    (\Tilde{A}u,\Tilde{B}u) = (\Tilde{B}\Tilde{A}u,u).
\end{equation}
Second we look at $(\Tilde{B}u,\Tilde{A}u)$. Since $\Tilde{A} = - h^2\Delta_g - |\nabla_g \Tilde{\varphi}|_g^2$, we use Green's formula to obtain
\begin{equation}
\label{2.4}
    (\Tilde{B}u,\Tilde{A}u) = (\Tilde{A}\Tilde{B}u,u) -h^2\int_{\partial M}(\partial_\nu \overline{u})(\Tilde{B}u)dS_g.
\end{equation}
Combining \eqref{2.3} and \eqref{2.4}, we can write \eqref{2.2} as
\begin{equation}
    \label{2.5}
    \|P_{\Tilde{\varphi}}u\|^2 = \|\Tilde{A}u\|^2+ \|\Tilde{B}u\|^2 +i([\Tilde{A},\Tilde{B}]u,u)-ih^2\int_{\partial M} (\partial_\nu \overline{u})(\Tilde{B}u)dS_g.
\end{equation}
Since $\Tilde{B}u|_{\partial M} = -2i \langle \nabla_g \Tilde{\varphi}, h\nabla_g u\rangle_g|_{\partial M} - \underbrace{ih(\Delta_g \Tilde{\varphi})u|_{\partial M}}_{=0 \text{ since } u|_{\partial M} =0}=-2ih\p_\nu \tilde \varphi\p_\nu u|_{\p M}$, we get 
\[
 -ih^2\int_{\partial M} (\partial_\nu \overline{u})(\Tilde{B}u)dS_g = -2h^3 \int_{\partial M}(\partial_\nu \Tilde{\varphi})|\partial_\nu u|^2dS_g.
\]
Thus \eqref{2.5} becomes
\begin{equation}
    \label{2.6}
    \|P_{\Tilde{\varphi}}u\|^2 = \|\Tilde{A}u\|^2+ \|\Tilde{B}u\|^2 +i([\Tilde{A},\Tilde{B}]u,u) -2h^3 \int_{\partial M}(\partial_\nu \Tilde{\varphi})|\partial_\nu u|^2dS_g.
\end{equation}
Recall from \cite{DKSaU_2009}*{page 143} that 
\[
i[\Tilde{A},\Tilde{B}] = 4\frac{h^2}{\epsilon}\bigg(1+\frac{h}{\epsilon}\varphi\bigg)^2+h\Tilde{B}\beta \Tilde{B}+h^2\Tilde{R},
\]
where $\beta = \frac{h}{\epsilon}(1+\frac{h}{\epsilon}\varphi)^{-2}$ and $\Tilde R$ is a semiclassical differential operator of order one, with coefficients uniformly bounded with respect to $h$ and $\epsilon$. Thus \eqref{2.6} becomes 
\begin{equation}
\label{2.9}
    \begin{split}
        \|P_{\Tilde{\varphi}}u\|^2 &= \|\Tilde{A}u\|^2 + \|\Tilde{B}u\|^2 + 4\frac{h^2}{\epsilon}\bigg(\bigg(1+\frac{h}{\epsilon}\varphi\bigg)^2u,u\bigg) +h (\Tilde{B}\beta \Tilde{B}u,u)\\
        &+h^2(\Tilde R u,u) - 2h^3 \int_{\partial M} (\partial_\nu \Tilde{\varphi})|\partial_\nu u|^2dS_g.
    \end{split}
\end{equation}
Here integrating by parts and using that  $u|_{\partial M} =0$, we get  
\[
(\Tilde{B} \beta \Tilde{B}u,u) = (\beta \Tilde{B}u,\Tilde{B}u).
\]
Noting also that $(1+\frac{h}{\epsilon}\varphi)^2 \geq \frac{1}{2}$ for $h$ sufficiently small, \eqref{2.9} implies that \begin{equation}
\label{2.10}
    \begin{split}
    \|P_{\Tilde{\varphi}}u\|^2 &\geq 2\frac{h^2}{\epsilon} \|u\|^2+\|\Tilde{A}u\|^2+\|\Tilde{B}u\|^2- \mathcal{O}(h)\|\Tilde{B}u\|^2\\
    &-\mathcal{O}(h^2)\|\Tilde{R}u\| \|u\|- 2h^3 \int_{\partial M} (\partial_\nu \Tilde{\varphi})|\partial_\nu u|^2dS_g.
    \end{split}
\end{equation}
Note that $\|\Tilde{B}u\|^2- \mathcal{O}(h)\|\Tilde{B}u\|^2\geq 0$ for $h$ sufficiently small. We also have
\[
\tilde R=\mathcal{O}(1): H^1_{scl}(M^0)\to L^2(M),
\]
and therefore, 
\[
\|\Tilde{R}u\| \|u\|\le \mathcal{O}(1) \|u\|_{H^1_{scl}(M^0)}^2.
\]
Thus we get from \eqref{2.10} that 
\begin{equation}
    \label{2.11}
    \|P_{\Tilde{\varphi}}u\|^2 \geq 2\frac{h^2}{\epsilon} \|u\|^2+\|\Tilde{A}u\|^2 -\mathcal{O}(h^2)\|u\|_{H^1_{scl}(M^0)}^2-2h^3\int_{\partial M} (\partial_\nu \Tilde{\varphi})|\partial_\nu u|^2dS_g.
\end{equation}
Now note that $(\Tilde{A}u,u) = (-h^2 \Delta_g u,u)-(|\nabla_g\Tilde{\varphi}|_g^2u,u)$. By integration by parts and the fact that $u|_{\partial M} =0$, we have $(-h^2 \Delta_g u,u) = \|h \nabla_g u\|^2$, and therefore, 
\[
\|h\nabla_g u\|^2 = (\Tilde{A}u,u) + (|\nabla_g\Tilde{\varphi}|_g^2u,u).
\]
 By the Cauchy-Schwarz inequality, we have $(\Tilde{A}u,u) \leq \|\tilde{A}u\|\|u\|$, and since $|\nabla_g \tilde{\varphi}|_g =\mathcal{O}(1)$, we get 
 \[
 \|h\nabla_g u\|^2  \leq \|\tilde{A}u\| \|u\| + \mathcal{O}(1)\|u\|^2 \leq C(\|\tilde{A}u\|^2+\|u\|^2),
 \]
 for some $C>1$.
Thus we obtain from  \eqref{2.11} that
\begin{equation}
    \label{2.12}
    \begin{split}
        \|P_{\Tilde{\varphi}}u\|^2 &\geq \bigg(1-\frac{h^2}{\epsilon}\bigg)\|\tilde{A}u\|^2 + \frac{h^2}{\epsilon} \bigg(\frac{1}{C}\|h\nabla_g u\|^2-\|u\|^2\bigg) +2\frac{h^2}{\epsilon}\|u\|^2\\
        &-\mathcal{O}(h^2)\|u\|_{H^1_{scl}(M^0)}^2-2h^3\int_{\partial M} (\partial_\nu \Tilde{\varphi})|\partial_\nu u|^2dS_g
    \end{split}
\end{equation}
Since $1-\frac{h^2}{\epsilon} \geq 0$ for $h$ sufficiently small, \eqref{2.12} implies that
\begin{equation}
    \label{2.13}
    \begin{split}
        \|P_{\Tilde{\varphi}}u\|^2 &\geq \frac{h^2}{\epsilon} \frac{1}{C}(\|h\nabla_g u\|^2+\|u\|^2) -2h^3\int_{\partial M} (\partial_\nu \Tilde{\varphi})|\partial_\nu u|^2dS_g\\
        &=  \frac{h^2}{\epsilon} \frac{1}{C} \|u\|_{H^1_{scl}(M^0)}^2-2h^3\int_{\partial M} (\partial_\nu \Tilde{\varphi})|\partial_\nu u|^2dS_g,
    \end{split}
\end{equation}
 for all  $0< h \ll \epsilon \ll 1$.
\end{proof}

\noindent  We shall now perturb the Carleman estimate of Proposition \ref{prop2.1} by lower order terms. The result below is an extension of \cite{KnudsenSalo}*{Proposition 4.1}  from the Euclidean setting to that of Riemannian manifolds. 
\begin{prop}
\label{prop2.2}
     Let $\varphi(x)=\pm x_1$ and let $\Tilde{\varphi} = \varphi + \frac{h}{2\epsilon}\varphi^2$. Let $A \in (W^{1,n} \cap L^\infty)(M^0,T^*M^0)$,  $q \in L^n(M,\mathbb{C})$. Then for $0 < h \ll \epsilon \ll 1$ we have
    \begin{equation}
    \label{2.15}
        \|e^{\Tilde{\varphi} /h} \circ (h^2 L_{A,q}) \circ e^{-\Tilde{\varphi} /h}u \|^2 \geq \frac{1}{C}\frac{h^2}{\epsilon}\|u\|_{H^1_{scl}(M^0)}^2-2h^3\int_{\partial M}(\partial_\nu \Tilde{\varphi})|\partial_\nu u|^2 dS_g, \hspace{5mm} C>0,
    \end{equation}
    for all $u\in H^2(M^0)\cap H_0^1(M^0)$. 
\end{prop}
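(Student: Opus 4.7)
The plan is to reduce Proposition \ref{prop2.2} to Proposition \ref{prop2.1} by treating the magnetic and electric parts of $L_{A,q}$ as a perturbation of the conjugated Laplacian. First I would decompose
\[
\mathcal{L}_{\tilde\varphi}u := e^{\tilde\varphi/h}\circ(h^2 L_{A,q})\circ e^{-\tilde\varphi/h}u = P_{\tilde\varphi}u + Ru,
\]
where $R$ collects the contributions of $A$ and $q$. Since conjugation by $e^{\tilde\varphi/h}$ commutes with multiplication operators, the zero order coefficients pass through unchanged, while the first order piece $-2ih^2\langle A,du\rangle_g$ produces an additional zero order contribution $2ih\langle A,d\tilde\varphi\rangle_g u$ after conjugation. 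Using the expression \eqref{1.1} for $L_{A,q}$, this gives the explicit formula
\[
Ru = ih^2(d^*A)u - 2ih^2\langle A,du\rangle_g + 2ih\langle A,d\tilde\varphi\rangle_g u + h^2(\langle A,A\rangle_g+q)u.
\]

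The heart of the argument is then to estimate $\|Ru\|_{L^2(M)}$ in terms of $\|u\|_{H^1_{scl}(M^0)}$. For the genuinely first order term the bound $|A|_g\le\|A\|_{L^\infty}$ gives $\|{-}2ih^2\langle A,du\rangle_g\|_{L^2}\le 2h\|A\|_{L^\infty}\|u\|_{H^1_{scl}(M^0)}$, and the zero order term $2ih\langle A,d\tilde\varphi\rangle_g u$ admits an analogous bound since $|d\tilde\varphi|_g=\mathcal{O}(1)$. The more delicate ingredients are the coefficients $d^*A$ and $q$, which lie only in $L^n$; for these I would combine H\"older's inequality with the semiclassical Sobolev embedding
\[
\|v\|_{L^{2n/(n-2)}(M)}\le Ch^{-1}\|v\|_{H^1_{scl}(M^0)}, \qquad 0<h\le 1,
\]
obtaining for example $\|h^2 qu\|_{L^2}\le Ch\|q\|_{L^n}\|u\|_{H^1_{scl}(M^0)}$, and analogously for the $h^2(d^*A)u$ and $h^2\langle A,A\rangle_g u$ pieces. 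Altogether this produces
\[
\|Ru\|_{L^2}^2 \le Ch^2 K^2\|u\|_{H^1_{scl}(M^0)}^2,
\]
with $K$ depending only on $\|A\|_{W^{1,n}\cap L^\infty}$ and $\|q\|_{L^n}$.

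To conclude, I would feed this into the elementary inequality $\|P_{\tilde\varphi}u+Ru\|^2\ge\tfrac{1}{2}\|P_{\tilde\varphi}u\|^2-\|Ru\|^2$, apply Proposition \ref{prop2.1}, and arrive at
\[
\|\mathcal{L}_{\tilde\varphi}u\|^2 \ge \frac{h^2}{2C\epsilon}\|u\|_{H^1_{scl}(M^0)}^2 - h^3\!\int_{\partial M}(\partial_\nu\tilde\varphi)|\partial_\nu u|^2 \, dS_g - Ch^2K^2\|u\|_{H^1_{scl}(M^0)}^2.
\]
Choosing $\epsilon$ small (but depending only on $K$) so that $Ch^2K^2\le\frac{h^2}{4C\epsilon}$, the $K$-term is absorbed into the main term and one recovers a Carleman estimate of the form \eqref{2.15}; the precise constant on the boundary contribution is adjustable by minor bookkeeping of constants in the Young-type inequality used (e.g.\ by replacing the factor $\tfrac{1}{2}$ with $1-\delta$ and letting $\delta\to 0$ as $\epsilon\to 0$).

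The step I expect to be the main obstacle is the semiclassical estimate of the $L^n$ pieces of $R$: each application of Sobolev embedding costs a factor of $h^{-1}$, so it is crucial that the full $h^2$ prefactor on $L_{A,q}$ survives to leave a net $\mathcal{O}(h)$ loss per coefficient. This is exactly what makes the perturbation strictly smaller than the $\mathcal{O}(h^2/\epsilon)$ main term once $\epsilon$ is chosen small in terms of the fixed potentials but independently of $h$, and is the precise reason the proof requires the regularity assumptions $A\in W^{1,n}\cap L^\infty$ and $q\in L^n$ rather than merely $L^\infty$ data.
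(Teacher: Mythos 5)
Your proposal is correct and follows essentially the same route as the paper: conjugate $h^2L_{A,q}$, split off the conjugated Laplacian, bound the $L^n$ coefficients $d^*A$ and $q$ via H\"older plus the Sobolev embedding $H^1\subset L^{2n/(n-2)}$ (costing one factor of $h$) and the remaining terms via $A\in L^\infty$, and absorb the resulting $\mathcal{O}(h^2)\|u\|_{H^1_{scl}}^2$ perturbation into the $h^2/\epsilon$ main term of Proposition \ref{prop2.1} by taking $\epsilon$ small. Your caveat about the constant on the boundary term is well taken but harmless, since only the sign-split version of the estimate (Corollary \ref{cor_prop2.2}, where the constants are $\mathcal{O}(h^{-2})$ and $\mathcal{O}(h)$ anyway) is used later.
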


\begin{proof}
   Let $u\in H^2(M^0)\cap H_0^1(M^0)$. Using \eqref{1.1}, we get 
       \begin{equation}
        \label{2.16}
        \begin{split}
            e^{\Tilde{\varphi} /h} \circ (h^2 L_{A,q}) \circ e^{-\Tilde{\varphi} /h} u&= e^{\Tilde{\varphi} /h} \circ (-h^2 \Delta_g) \circ e^{-\Tilde{\varphi} /h} u +ih^2(d^*A)u\\
            &-2ih^2 \langle A, e^{\tilde{\varphi}/h} \circ d \circ e^{-\tilde{\varphi}/h} u \rangle_g + (h^2 \langle A,A\rangle_g +h^2 q)u.
        \end{split}
    \end{equation}
    Now notice that by H\"older's inequality and the Sobolev embedding $H^1(M^0) \subset L^\frac{2n}{n-2}(M)$,   we have
\[
\|qu\|_{L^2(M)} \leq \|q\|_{L^n(M)}\|u\|_{L^\frac{2n}{n-2}(M)} \le \mathcal{O}(1)  \|q\|_{L^n}\|u\|_{H^1(M^0)}.
\] 
Since $h\|u\|_{H^1(M^0)} \leq \|u\|_{H^1_{scl}(M^0)}$, for $0<h\le 1$, we get
\begin{equation}
\label{2.17}
    h^2 \|qu\|_{L^2(M)} \leq \mathcal{O} (h) \|q\|_{L^n(M)}\|u\|_{H^1_{scl}(M^0)}.
\end{equation} 
Similarly, using that $d^*A \in L^n(M)$, we get for $0<h\le 1$,
\begin{equation}
    \label{2.18}
    h^2 \|(d^*A)u\|_{L^2(M)} \leq \mathcal{O}(h) \|d^*A\|_{L^n(M)}\|u\|_{H^1_{scl}(M^0)}.
\end{equation}
Furthermore, since $A\in L^\infty(M)$, we have
\begin{equation}
    \label{2.19}
    h^2 \|\langle A,A\rangle_g u\|_{L^2(M)} \leq h^2 \|\langle A,A\rangle_g \|_{L^\infty(M)} \|u\|_{L^2(M)} \leq \mathcal{O}(h^2)  \|u\|_{H^1_{scl}(M^0)}.
\end{equation}
Finally, we also have
\begin{equation}
    \label{2.20}
    \begin{split}
        h^2 \| \langle A, (e^{\tilde{\varphi}/h} \circ d \circ e^{-\tilde{\varphi}/h})u\rangle_g\|_{L^2(M)} &\leq h^2 \| \langle \underbrace{A}_{\in L^\infty(M)},du\rangle_g\|_{L^2(M)} +h \| \langle A, d\tilde{\varphi}\rangle_g u\|_{L^2(M)}\\
        &\leq \mathcal{O}(h) \|A\|_{L^\infty(M)}\|u\|_{H^1_{scl}(M^0)}.
    \end{split}
\end{equation}
Combining \eqref{2.1}, \eqref{2.17}, \eqref{2.18},\eqref{2.19}, \eqref{2.20}, and taking $\epsilon >0$ small enough, we obtain \eqref{2.15}.
\end{proof}
\noindent We shall need the following consequence of Proposition \ref{prop2.2}, obtained by taking $\epsilon>0$ sufficiently small but fixed, see \cite{ferreira2006determining}*{page 475}. We refer to  \cite{KnudsenSalo}*{Proposition 4.1} for the same result in the Euclidean case. Here
\begin{equation}
\label{eq_p_M-,+}
\p M_\pm^\varphi=\{x\in \p M: \pm \p_\nu \varphi(x)\ge 0\}.
\end{equation}
\begin{cor}
\label{cor_prop2.2}
     Let $\varphi(x)=\pm x_1$, $A \in (W^{1,n} \cap L^\infty)(M^0,T^*M^0)$, and  $q \in L^n(M,\mathbb{C})$. Then for $0 < h \ll  1$, we have
    \begin{equation}
    \label{2.15_cor}
\begin{aligned}
        -h(\p_\nu \varphi\, e^{\varphi /h}\p_\nu u,&e^{\varphi /h} \p_\nu u)_{L^2(\p M_-^\varphi)}+ \|e^{\varphi /h} u\|_{H^1_{scl}(M^0)}^2\\
       & \le \mathcal{O}(h^{-2})
        \|e^{\varphi /h} (h^2 L_{A,q}) u \|^2 +\mathcal{O}(h)(\p_\nu \varphi\, e^{\varphi /h}\p_\nu u, e^{\varphi /h}\p_\nu u)_{L^2(\p M_+^\varphi)}, 
\end{aligned}
    \end{equation}
    for all $u\in H^2(M^0)\cap H_0^1(M^0)$. 
\end{cor}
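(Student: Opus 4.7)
The plan is to deduce the corollary from Proposition \ref{prop2.2} by first substituting $u\mapsto e^{\tilde\varphi/h}u$ to pass from the conjugated operator to the weighted one $e^{\tilde\varphi/h}(h^2L_{A,q})$, then splitting the boundary integral along $\partial M_\pm^\varphi$, and finally fixing $\epsilon>0$ small and independent of $h$ so that the convexifying weight $\tilde\varphi$ can be replaced by the linear weight $\varphi$.

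In more detail, let $u\in H^2(M^0)\cap H^1_0(M^0)$ and set $w=e^{\tilde\varphi/h}u$. Since $e^{\tilde\varphi/h}\in C^\infty(\overline M)$ and $w|_{\partial M}=0$, we have $w\in H^2(M^0)\cap H^1_0(M^0)$, so Proposition \ref{prop2.2} applied to $w$ gives
\[
\|e^{\tilde\varphi/h}(h^2L_{A,q})u\|^2\ \ge\ \frac{h^2}{C\epsilon}\|e^{\tilde\varphi/h}u\|^2_{H^1_{scl}(M^0)}-2h^3\int_{\partial M}(\partial_\nu\tilde\varphi)\,|e^{\tilde\varphi/h}\partial_\nu u|^2\,dS_g,
\]
where we used $\partial_\nu w|_{\partial M}=e^{\tilde\varphi/h}\partial_\nu u|_{\partial M}$. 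From $\partial_\nu\tilde\varphi=(1+(h/\epsilon)\varphi)\partial_\nu\varphi$ and the fact that $1+(h/\epsilon)\varphi\in[1/2,3/2]$ on $\overline M$ for $h/\epsilon$ small, I get $\partial_\nu\tilde\varphi$ has the same sign as $\partial_\nu\varphi$ and $|\partial_\nu\tilde\varphi|\asymp|\partial_\nu\varphi|$. Splitting $\int_{\partial M}=\int_{\partial M_-^\varphi}+\int_{\partial M_+^\varphi}$ and moving the (nonnegative) $\partial M_-^\varphi$ piece of the combination $-2h^3\int(\partial_\nu\tilde\varphi)|\cdot|^2$ to the left-hand side, while sending the $\partial M_+^\varphi$ piece to the right-hand side, produces the two-sided structure claimed, but still written in terms of $\tilde\varphi$.

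To pass to the plain weight, I fix $\epsilon>0$ small and independent of $h$, as in \cite{ferreira2006determining}*{page 475}. Since $\tilde\varphi-\varphi=\frac{h}{2\epsilon}\varphi^2$, the ratio $e^{\tilde\varphi/h}/e^{\varphi/h}=e^{\varphi^2/(2\epsilon)}=:\phi$ is a smooth, positive, $h$-independent function bounded above and below on $\overline M$. This immediately bounds the $L^2$ norms and the boundary integrals involving $e^{\tilde\varphi/h}$ by those involving $e^{\varphi/h}$, up to $\epsilon$-dependent constants that are absorbed into $\mathcal{O}(1)$. For the $H^1_{scl}$ norm, the product rule
\[
h\nabla_g(e^{\tilde\varphi/h}u)=\phi\,h\nabla_g(e^{\varphi/h}u)+h(\nabla_g\phi)\,e^{\varphi/h}u
\]
shows that the two weighted seminorms are equivalent uniformly in $h$, since the second term is controlled by the $L^2$ part of $\|e^{\varphi/h}u\|_{H^1_{scl}(M^0)}$. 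Dividing the resulting inequality by $h^2$ then generates the prefactors $-h$, $\mathcal{O}(h^{-2})$, and $\mathcal{O}(h)$ in front of the three terms, which is exactly \eqref{2.15_cor}. I do not anticipate a serious obstacle; the only mildly delicate point is the equivalence of the semiclassical $H^1$ weighted norms under $\tilde\varphi$ versus $\varphi$, which is the product-rule computation above, valid once $\epsilon$ is held fixed.
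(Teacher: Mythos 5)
Your argument is correct and is precisely the route the paper intends: Corollary \ref{cor_prop2.2} is stated there as following from Proposition \ref{prop2.2} ``by taking $\epsilon>0$ sufficiently small but fixed,'' and your substitution $u\mapsto e^{\tilde\varphi/h}u$, the sign analysis of $\partial_\nu\tilde\varphi=(1+\tfrac{h}{\epsilon}\varphi)\partial_\nu\varphi$ on $\partial M_\pm^\varphi$, and the uniform (in $h$) comparability of the $\tilde\varphi$- and $\varphi$-weighted quantities for fixed $\epsilon$ are exactly the omitted details. No gaps.
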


\noindent In order to construct CGO solutions, we shall also need interior Carleman estimates. Our starting point is the following Carleman estimate for $-h^2\Delta_g$ with a gain of two derivatives, obtained in \cite{2018}*{Proposition 2.2}. Here and in what follows, we set
\[
\|u\|_{H_{scl}^{-1}(M^0)}=\sup_{0\ne \psi\in C^\infty_0(M^0)}\frac{|\langle u, \psi\rangle_{M_0}|}{\|\psi\|_{H^1_{scl}(M_0)}}.
\]
\begin{prop}
\label{cor2.1}
 Let $\varphi(x)=\pm x_1$ and let $\Tilde{\varphi} = \varphi + \frac{h}{2\epsilon}\varphi^2$.  Then for $0 < h \ll \epsilon \ll 1$, we have
    \begin{equation}
    \label{2.14}
        \frac{h}{\sqrt{\epsilon}} \|u\|_{H^1_{scl}(M^0)} \leq C \| e^{\tilde{\varphi} / h}\circ (-h^2 \Delta_g) \circ e^{-\tilde{\varphi} / h}u\|_{H_{scl}^{-1}(M^0)}, \hspace{5mm} C>0,
    \end{equation}
    for all $u\in C_0^\infty(M^0)$.
\end{prop}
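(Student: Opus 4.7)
The strategy is to deduce this ``gain of two derivatives'' Carleman estimate from the $L^2\to L^2$ estimate of Proposition \ref{prop2.1} via a semiclassical pseudodifferential substitution. First, I would observe that for $u\in C_0^\infty(M^0)$ the boundary term in \eqref{2.1} vanishes, so Proposition \ref{prop2.1} directly yields
\[
\frac{h}{\sqrt{\epsilon}}\|u\|_{H^1_{scl}(M^0)}\le C\|P_{\tilde{\varphi}}u\|_{L^2(M)},\qquad u\in C_0^\infty(M^0),
\]
where $P_{\tilde{\varphi}}=e^{\tilde{\varphi}/h}\circ(-h^2\Delta_g)\circ e^{-\tilde{\varphi}/h}$. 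A closer inspection of the proof of Proposition \ref{prop2.1}, between \eqref{2.2} and \eqref{2.13}, shows that $\|\tilde{A}u\|^2$, with $\tilde{A}=-h^2\Delta_g-|\nabla_g\tilde{\varphi}|_g^2$, is also controlled by $\|P_{\tilde{\varphi}}u\|^2$ modulo absorbable lower order errors. Since $\tilde{A}$ is semiclassically elliptic of order two, standard semiclassical elliptic regularity upgrades this to the strengthened bound
\[
\frac{h}{\sqrt{\epsilon}}\|u\|_{H^2_{scl}(M^0)}\le C\|P_{\tilde{\varphi}}u\|_{L^2(M)},\qquad u\in C_0^\infty(M^0).
\]

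Next, to shift one derivative from the left to the right, I would apply this strengthened inequality to a semiclassically regularized version of the test function. Enlarge $M$ to a smooth compact $\tilde{M}$ with $M\subset\tilde{M}^0\subset\subset\R\times M_0^0$, and let $\Lambda_h$ be a semiclassical elliptic pseudodifferential operator of order one in a neighborhood of $\tilde{M}$ (for instance, the semiclassical Bessel operator modelled on $(1-h^2\Delta_g)^{1/2}$, constructed by Friedrichs quantization in a single global chart on $\R\times M_0^0$, available since $(M_0,g_0)$ is simple and hence globally diffeomorphic to a Euclidean ball). Then $\Lambda_h^{\pm 1}:H^{s}_{scl}\to H^{s\mp 1}_{scl}$ are bounded uniformly in $h$, and the semiclassical commutator $[P_{\tilde{\varphi}},\Lambda_h^{-1}]$ lies in $h\,\Psi^0_{scl}$, i.e.\ is bounded on $L^2$ with norm $\mathcal{O}(h)$.

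Given $v\in C_0^\infty(M^0)$, choose $\chi\in C_0^\infty(\tilde{M}^0)$ equal to one on a neighborhood of $M$ and apply the $H^2_{scl}\to L^2$ estimate to $\chi\Lambda_h^{-1}v\in C_0^\infty(\tilde{M}^0)$. The Schwartz kernel of $\Lambda_h^{-1}$ has rapid semiclassical off-diagonal decay, so pseudolocality yields $\|v-\Lambda_h(\chi\Lambda_h^{-1}v)\|_{H^1_{scl}}=\mathcal{O}(h^N)\|v\|_{L^2}$ for any $N$, and cutoff commutators contribute similarly small errors. Decomposing $P_{\tilde{\varphi}}\Lambda_h^{-1}=\Lambda_h^{-1}P_{\tilde{\varphi}}+[P_{\tilde{\varphi}},\Lambda_h^{-1}]$, using that $\Lambda_h^{-1}:H^{-1}_{scl}\to L^2$ is bounded, and noting that $\|\Lambda_h^{-1}v\|_{H^2_{scl}}$ is comparable to $\|v\|_{H^1_{scl}}$, one obtains
\[
\frac{h}{\sqrt{\epsilon}}\|v\|_{H^1_{scl}(M^0)}\le C\|P_{\tilde{\varphi}}v\|_{H^{-1}_{scl}(M^0)}+\mathcal{O}(h)\|v\|_{L^2(M)}.
\]
Since $h\le h/\sqrt{\epsilon}$ for $\epsilon\le 1$, the last term is absorbed into the left-hand side for $h$ sufficiently small, which yields the claim.

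The main obstacle is the clean construction of $\Lambda_h$ on the open manifold $\R\times M_0^0$ together with the verification of the commutator estimate $[P_{\tilde{\varphi}},\Lambda_h^{-1}]\in h\,\Psi^0_{scl}$. This is handled by pulling back to a single coordinate chart on $\R^n$ covering $\tilde{M}$ (available because $M_0$ is simple), and then invoking the standard semiclassical pseudodifferential calculus on $\R^n$, where the composition and commutator formulas, as well as the semiclassical wavefront/pseudolocality estimates, are routine.
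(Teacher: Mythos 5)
The paper does not prove this proposition at all: it is quoted from \cite{2018}*{Proposition 2.2}, so there is no in-text argument to compare against. Your conjugation strategy is the standard route to such shifted-index Carleman estimates (it is essentially how the cited result and its Euclidean antecedents in \cite{UhlmannKatya2014} are obtained), and I believe it goes through: the boundary term in \eqref{2.1} vanishes for $u\in C_0^\infty(M^0)$, the term $\|\tilde{A}u\|^2$ is indeed controlled by $\|P_{\tilde{\varphi}}u\|^2$ from \eqref{2.12}, and the commutator and pseudolocality bookkeeping is routine once $\Lambda_h$ is built in a global chart. Two points deserve correction, though neither is fatal. First, $\tilde{A}=-h^2\Delta_g-|\nabla_g\tilde{\varphi}|_g^2$ is \emph{not} semiclassically elliptic: its semiclassical principal symbol $|\xi|_g^2-|\nabla_g\tilde{\varphi}|_g^2$ vanishes on the characteristic set $|\xi|_g=|\nabla_g\tilde{\varphi}|_g\approx 1$. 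What you actually need is the classical (non-semiclassical) elliptic estimate $\|u\|_{H^2_{scl}}\le C(\|h^2\Delta_g u\|_{L^2}+\|u\|_{L^2})$, after which $\|h^2\Delta_g u\|\le\|\tilde{A}u\|+\mathcal{O}(1)\|u\|$ and the residual $\|u\|_{L^2}$ is absorbed using the already-established $s=0$ estimate; the conclusion of your second displayed inequality survives, but not for the reason you give. Second, the final absorption of $\mathcal{O}(h)\|v\|_{L^2}$ into $\frac{h}{\sqrt{\epsilon}}\|v\|_{H^1_{scl}}$ requires $C\sqrt{\epsilon}\le\frac{1}{2}$, i.e.\ $\epsilon$ small rather than $h$ small; this is consistent with the regime $0<h\ll\epsilon\ll 1$ but is not what ``for $h$ sufficiently small'' delivers. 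You should also note explicitly that the $H^2_{scl}$-to-$L^2$ estimate must be applied on the enlarged manifold $\tilde{M}$ (where $\chi\Lambda_h^{-1}v$ lives), which is harmless since $\varphi$ is a limiting Carleman weight on all of $\R\times M_0^0$.
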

\noindent  We shall now perturb the Carleman estimate of Proposition \ref{cor2.1} by lower order terms. To that end, using \eqref{2.16},  \eqref{2.14}, \eqref{2.17}, \eqref{2.18},\eqref{2.19}, \eqref{2.20}, and taking $\epsilon>0$ sufficiently small but fixed, we get the following result. 
\begin{cor}
\label{cor_cor2.1}
     Let $\varphi(x)=\pm x_1$, $A \in (W^{1,n} \cap L^\infty)(M^0,T^*M^0)$, and  $q \in L^n(M,\mathbb{C})$. Then for $0 < h \ll  1$, we have
    \begin{equation}
    \label{2.14_cor}
        h \|u\|_{H^1_{scl}(M^0)} \leq C \| e^{\varphi / h}\circ (h^2 L_{A,q}) \circ e^{-\varphi / h}u\|_{H_{scl}^{-1}(M^0)}, \hspace{5mm} C>0,
    \end{equation}
    for all $u\in C_0^\infty(M^0)$.
\end{cor}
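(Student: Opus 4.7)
The plan is to parallel the proof of Proposition \ref{prop2.2}, replacing the boundary estimate \eqref{2.1} by the interior estimate \eqref{2.14} and measuring source terms in $H^{-1}_{scl}(M^0)$ rather than in $L^2(M)$. For $u\in C_0^\infty(M^0)$, I would first write, using \eqref{2.16},
\[
e^{\tilde\varphi/h}\circ (-h^2\Delta_g)\circ e^{-\tilde\varphi/h}u = e^{\tilde\varphi/h}\circ (h^2 L_{A,q})\circ e^{-\tilde\varphi/h}u - R(u),
\]
where $R(u)$ collects the remaining terms $ih^2(d^*A)u$, $-2ih^2\langle A, e^{\tilde\varphi/h}\circ d\circ e^{-\tilde\varphi/h}u\rangle_g$, and $h^2(\langle A,A\rangle_g+q)u$. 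The bounds \eqref{2.17}--\eqref{2.20}, valid under $A\in (W^{1,n}\cap L^\infty)(M^0,T^*M^0)$ and $q\in L^n(M,\mathbb{C})$, yield $\|R(u)\|_{L^2(M)}\leq \mathcal{O}(h)\|u\|_{H^1_{scl}(M^0)}$. Since $H^1_{scl}(M^0)\hookrightarrow L^2(M)$ with norm at most one, the dual embedding $L^2(M)\hookrightarrow H^{-1}_{scl}(M^0)$ is continuous with norm at most one, hence the same bound transfers to the $H^{-1}_{scl}$ norm.

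Applying Proposition \ref{cor2.1} to $u$ together with the triangle inequality then gives
\[
\frac{h}{\sqrt{\epsilon}}\|u\|_{H^1_{scl}(M^0)} \leq C\|e^{\tilde\varphi/h}\circ (h^2 L_{A,q})\circ e^{-\tilde\varphi/h}u\|_{H^{-1}_{scl}(M^0)} + Ch\|u\|_{H^1_{scl}(M^0)}.
\]
Fixing $\epsilon>0$ sufficiently small, but independent of $h$, so that $C\sqrt{\epsilon}\leq 1/2$, the last term can be absorbed into the left-hand side, yielding the desired estimate in the form
\[
h\|u\|_{H^1_{scl}(M^0)} \leq C'\|e^{\tilde\varphi/h}\circ (h^2 L_{A,q})\circ e^{-\tilde\varphi/h}u\|_{H^{-1}_{scl}(M^0)}.
\]

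The final step is to pass from $\tilde\varphi$ back to $\varphi$. With $\epsilon$ now fixed, the function $m:=e^{\varphi^2/(2\epsilon)}$ is a smooth, strictly positive function on $M$ with $h$-independent derivative bounds, and $e^{\tilde\varphi/h}=m\,e^{\varphi/h}$. Multiplication by $m^{\pm 1}$ is therefore a bounded isomorphism of $H^1_{scl}(M^0)$ and of $H^{-1}_{scl}(M^0)$, with constants uniform in $h$ (the semiclassical derivatives of $m$ carry an extra factor of $h$ and are thus harmless). Substituting $u=m\,v$ with $v\in C_0^\infty(M^0)$ converts the $\tilde\varphi$-estimate into the $\varphi$-estimate of \eqref{2.14_cor}, up to changing the constant $C'$. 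The only point requiring care is this $h$-uniform boundedness of multiplication on $H^{-1}_{scl}(M^0)$, which I expect to be the sole mildly delicate step; everything else reduces to invoking the estimates already established in Proposition \ref{prop2.2}.
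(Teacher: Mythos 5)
Your proposal is correct and follows essentially the same route as the paper, which obtains the corollary by combining the decomposition \eqref{2.16} with the interior estimate \eqref{2.14} and the perturbative bounds \eqref{2.17}--\eqref{2.20}, measuring the lower-order terms in $H^{-1}_{scl}(M^0)$ via the embedding $L^2(M)\hookrightarrow H^{-1}_{scl}(M^0)$ and absorbing them by fixing $\epsilon$ small. Your explicit verification that multiplication by $e^{\varphi^2/(2\epsilon)}$ is an $h$-uniformly bounded isomorphism of $H^1_{scl}(M^0)$ and $H^{-1}_{scl}(M^0)$ is the one step the paper leaves implicit, and it is handled correctly.
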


\noindent We shall also need the following solvability result. 
\begin{theorem}
\label{thm_solvability}
     Let $\varphi(x)=\pm x_1$, $A \in (W^{1,n} \cap L^\infty)(M^0,T^*M^0)$, and  $q \in L^n(M,\mathbb{C})$. If $h>0$ is small enough, then for any $v\in H^{-1}(M^0)$, there is a solution $u\in H^1(M^0)$ of the equation 
\[     
e^{\varphi/h}(h^2 L_{A,q}) e^{-\varphi/h} u = v \text{ in } M^0,
\]
which satisfies $$\|u\|_{H^1_{scl}(M^0)} \leq \frac{C}{h}\|v\|_{H^{-1}_{scl}(M^0)}.$$
\end{theorem}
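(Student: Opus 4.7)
The plan is to derive the solvability from a Hahn--Banach duality argument based on the Carleman estimate of Corollary \ref{cor_cor2.1} applied to the formal adjoint of the conjugated operator
\[
P_h := e^{\varphi/h}(h^2 L_{A,q}) e^{-\varphi/h}.
\]

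First I would identify the formal $L^2$-adjoint of $L_{A,q}$. A direct computation using \eqref{eq_int_2} shows $L_{A,q}^* = d_A^* d_{\overline{A}} + \overline{q} = L_{\overline{A},\overline{q}}$, so the class of magnetic Schr\"odinger operators under consideration is closed under taking formal adjoints, with the regularity of the potentials preserved. Consequently,
\[
P_h^* = e^{-\varphi/h}(h^2 L_{\overline{A},\overline{q}}) e^{\varphi/h}
\]
is again a conjugated operator of the type treated in Corollary \ref{cor_cor2.1}, with the limiting Carleman weight $-\varphi = \mp x_1$ and potentials still in $(W^{1,n}\cap L^\infty)(M^0,T^*M^0)$ and $L^n(M,\C)$. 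Applying that corollary to $P_h^*$ yields, for $0<h\ll 1$, the adjoint Carleman estimate
\[
h\|w\|_{H^1_{scl}(M^0)} \le C\|P_h^* w\|_{H^{-1}_{scl}(M^0)}, \qquad w\in C_0^\infty(M^0).
\]

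Next I would run the Hahn--Banach construction. The estimate above shows $P_h^*$ is injective on $C_0^\infty(M^0)$, so on the linear subspace $\mathcal{W} := \{P_h^* w : w\in C_0^\infty(M^0)\} \subset H^{-1}_{scl}(M^0)$ the prescription $\ell(P_h^* w) := \langle v,w\rangle_{M^0}$, with $\langle \cdot,\cdot\rangle_{M^0}$ the distributional pairing between $H^{-1}(M^0)$ and $H^1_0(M^0)$, defines a linear functional satisfying
\[
|\ell(P_h^* w)| \le \|v\|_{H^{-1}_{scl}(M^0)}\|w\|_{H^1_{scl}(M^0)} \le \frac{C}{h}\|v\|_{H^{-1}_{scl}(M^0)} \|P_h^* w\|_{H^{-1}_{scl}(M^0)}.
\]
Extending $\ell$ by Hahn--Banach to a continuous linear functional on $H^{-1}_{scl}(M^0)$ of norm at most $(C/h)\|v\|_{H^{-1}_{scl}(M^0)}$, and identifying $(H^{-1}_{scl}(M^0))^*$ with $H^1_0(M^0)$ equipped with the semiclassical norm, I obtain $u\in H^1_0(M^0) \subset H^1(M^0)$ with $\|u\|_{H^1_{scl}(M^0)} \le (C/h)\|v\|_{H^{-1}_{scl}(M^0)}$ representing $\ell$. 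Unwinding the definition yields $\langle P_h^* w, u\rangle_{M^0} = \langle v, w\rangle_{M^0}$ for every $w\in C_0^\infty(M^0)$, i.e.\ $P_h u = v$ in $\mathcal{D}'(M^0)$.

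The only mildly delicate point is the algebraic identification $L_{A,q}^* = L_{\overline{A},\overline{q}}$, which is what allows Corollary \ref{cor_cor2.1} to be transferred from $P_h$ to $P_h^*$ with no loss in the relevant constants or function spaces. Beyond this bookkeeping, the proof is the standard reduction of a solvability statement to a Carleman estimate for the adjoint, and no genuine analytic obstruction remains.
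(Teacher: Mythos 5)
Your argument is correct and is precisely the route the paper takes: the paper's proof of Theorem \ref{thm_solvability} consists of a one-line citation to ``the standard argument of the Hahn--Banach theorem and the Carleman estimates of Corollary \ref{cor_cor2.1}'' (following \cite{UhlmannKatya2014}*{Proposition 2.3}), and what you have written out --- the identification $L_{A,q}^*=L_{\overline{A},\overline{q}}$, the adjoint Carleman estimate with weight $-\varphi$, and the duality construction of $u$ --- is exactly that standard argument made explicit. No gap.
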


\begin{proof}
 The proof follows from the standard argument of the Hahn-Banach theorem and  Carleman estimates of Corollary  \ref{cor_cor2.1}, see \cite{UhlmannKatya2014}*{Proposition 2.3}.
\end{proof}

\section{Construction of CGO solutions}

\label{sec_CGO}

Let $(M,g)$ be an admissible manifold so that $(M,g)\subset\subset (\R\times M_0^0,c(e\oplus g_0))$, where $(M_0,g_0)$ is simple. Replacing $M_0$ by slightly large simple manifold if needed, we may assume that for some simple manifold $(D,g_0)$, we have
\begin{equation}
\label{eq_3_emb}
(M,g) \subset \subset (\mathbb{R}\times D^0, c(e \oplus g_0)) \subset \subset (\mathbb{R} \times M_0^0, c(e \oplus g_0)). \end{equation}

\noindent Let $A \in (W^{1,n}\cap L^\infty)(M^0,T^*M^0)$, and let us extend $A$ to all of $\R\times M_0^0$ so that the extension, denoted by the same letter, satisfies $A \in (W^{1,n}\cap L^\infty)(\R\times M_0^0,T^*(\R\times M_0^0))$ and is compactly supported,  see \cite{Brezis_book}*{Theorem 9.7}. 
Let  $q\in L^n(M,\mathbb{C})$, and let us take the zero extension of $q$ to all of $\R\times M_0^0$.  We denote the extension by the same letter.

\noindent Using a partition of unity argument combined with Propositions \ref{propA.1},  \ref{propA.3}, we get the following result to be used when constructing the CGO solutions below. 
\begin{prop}
\label{prop_A-reg}
There exists a family $A_\tau\in C^\infty_0(\R\times M_0^0,T^*(\R\times M_0^0) )$, $\tau>0$, such that 
\begin{equation}
\label{prop_A-reg_1}
\|A-A_\tau\|_{L^n}=o(\tau),
\end{equation}
\begin{equation}
\label{prop_A-reg_2}
\|A_\tau\|_{L^n}=\mathcal{O}(1), \quad \|\nabla_g A_\tau^\sharp\|_{L^n}=\mathcal{O}(1), \quad \|\nabla_g^2A_\tau^\sharp\|_{L^n}=o(\tau^{-1}),
\end{equation}
and 
\begin{equation}
\label{prop_A-reg_3}
\|\nabla_g^k A_\tau^\sharp \|_{L^\infty}=\mathcal{O}(\tau^{-k}), \quad k=0,1,2,\dots,
\end{equation}
as $\tau\to 0$. Here $A^\sharp_\tau=\sum_{j,k=1}^n g^{jk}(A_\tau)_k\p_{x_j}$ is the vector field, corresponding to the one form $A_\tau$. 
\end{prop}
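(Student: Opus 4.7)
The plan is to reduce the construction to the Euclidean mollification estimates of Appendix \ref{sec_app} via a partition of unity. First I would fix finitely many coordinate charts $(U_j,\kappa_j)$, $j=1,\dots,N$, on $\R\times M_0^0$ whose union contains $\supp(A)$, together with a subordinate smooth partition of unity $\{\eta_j\}$ with $\supp\eta_j\subset\subset U_j$. Writing $A=\sum_j\eta_j A$, the coefficients of each $\eta_j A$ in the chart $\kappa_j$ are compactly supported functions in $(W^{1,n}\cap L^\infty)(\R^n)$. I would then apply Propositions \ref{propA.1} and \ref{propA.3} componentwise to mollify each such coefficient with a standard symmetric mollifier of scale $\tau$, producing smooth compactly supported coordinate functions that satisfy the Euclidean analogues of \eqref{prop_A-reg_1}--\eqref{prop_A-reg_3}. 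Pulling back to $M$ and summing then yields $A_\tau\in C_0^\infty(\R\times M_0^0,T^*(\R\times M_0^0))$, supported in $\bigcup_j U_j$ once $\tau$ is small.

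The advertised geometric estimates then follow by summing over $j$ and comparing Euclidean and geometric norms. The musical isomorphism $A_\tau\mapsto A_\tau^\sharp=g^{jk}(A_\tau)_k\p_{x_j}$ involves only the smooth bounded coefficients $g^{jk}$, and $\nabla_g^k A_\tau^\sharp$ differs from the $k$-th coordinate derivative of $A_\tau^\sharp$ by finite sums of Christoffel-symbol terms involving at most $k$ coordinate derivatives of $A_\tau^\sharp$, with uniformly bounded smooth coefficients. Consequently each of $\|\nabla_g A_\tau^\sharp\|_{L^n}$, $\|\nabla_g^2 A_\tau^\sharp\|_{L^n}$, and $\|\nabla_g^k A_\tau^\sharp\|_{L^\infty}$ is controlled by the corresponding coordinate-derivative norms of the mollified chart components.

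The point where one cannot afford to be loose---and which I expect to be the main content of Appendix \ref{sec_app}---is the distinction between $o$ and $O$ in \eqref{prop_A-reg_1} and in the Hessian clause of \eqref{prop_A-reg_2}. Both upgrades follow from a density argument: given a compactly supported $f\in W^{1,n}(\R^n)$ and $\varepsilon>0$, decompose $f=g+h$ with $g\in C_0^\infty$ and $\|h\|_{W^{1,n}}<\varepsilon$. The symmetry of the mollifier $\chi$ kills the linear Taylor term, giving $\|g-g*\chi_\tau\|_{L^n}=\mathcal{O}(\tau^2\|\nabla^2 g\|_{L^n})$, while Young's inequality yields $\|h-h*\chi_\tau\|_{L^n}\le C\tau\|\nabla h\|_{L^n}\le C\tau\varepsilon$; combining these and letting $\varepsilon\downarrow 0$ after taking $\limsup_{\tau\to 0}\tau^{-1}(\cdots)$ upgrades the naive bound $O(\tau)$ to $o(\tau)$, and an analogous argument with $\nabla^2(f*\chi_\tau)=\nabla f*\nabla\chi_\tau$ and $\|\nabla\chi_\tau\|_{L^1}\le C\tau^{-1}$ upgrades $O(\tau^{-1})$ to $o(\tau^{-1})$. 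The $L^n$ bounds $\|A_\tau\|_{L^n}=\mathcal{O}(1)$ and $\|\nabla_g A_\tau^\sharp\|_{L^n}=\mathcal{O}(1)$ are then immediate from Young's inequality, and the $L^\infty$ bounds \eqref{prop_A-reg_3} follow from the standard estimate $\|\nabla^k(f*\chi_\tau)\|_{L^\infty}\le C\tau^{-k}\|f\|_{L^\infty}$ applied with $f$ a chart coefficient of $A\in L^\infty$.
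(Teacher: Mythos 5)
Your proposal is correct and follows the paper's own route: the paper likewise disposes of Proposition \ref{prop_A-reg} by a partition of unity reducing to componentwise Euclidean mollification with a symmetric kernel, invoking Propositions \ref{propA.1} and \ref{propA.3} for the $o(\tau)$ and $o(\tau^{-1})$ refinements. The only divergence is in how those appendix estimates are justified: you upgrade $O$ to $o$ by splitting $f=g+h$ with $g\in C_0^\infty$ and $\|h\|_{W^{1,n}}<\varepsilon$ and using a second-order Taylor expansion on $g$, whereas the paper proves them directly by using the vanishing first moments of the radial mollifier to write $f*\Psi_\tau-f$ (respectively $\p_{x_k}f*\p_{x_j}\Psi_\tau$) as an integral against difference quotients of $\nabla f$ (respectively $\p_{x_k}f$) and then appealing to the $L^p$-continuity of translations; both mechanisms are standard and equivalent in strength.
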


We have global coordinates $x = (x_1,x') \in \mathbb{R}\times M_0$ in which the metric $g$ has the form 
 \begin{equation}
    \label{3.1}
     g(x) = c(x) \begin{pmatrix}1 & 0 \\ 0 & g_0(x') \end{pmatrix},
 \end{equation}
 where $c>0$ and $g_0$ is a simple metric on $M_0$. 
Let $\varphi(x) = x_1$ and observe that this is a limiting Carleman weight in the sense of \cite{DKSaU_2009}.

We shall next construct CGO solutions to the equation 
 \begin{equation}
    \label{3.2}
 L_{A,q}u=0  \text{ in } \mathcal{D}'(M^0),
 \end{equation}
 of the form 
  \begin{equation}
    \label{3.3}
 u=e^\frac{-\rho}{h}(a+r),
 \end{equation}
  where $\rho = \varphi + i \psi$ is a complex phase, $\psi \in C^\infty(M,\mathbb{R})$, $a \in C^\infty(M,\mathbb{C})$ is an amplitude and $r$ is a remainder term. To that end, we  have 
 $$e^\frac{\rho}{h} \circ (-h^2 \Delta_g) \circ e^{-\frac{\rho}{h}} = -h^2\Delta_g + h\Delta_g \rho +2h \nabla_g \rho - |\nabla_g \rho|_g^2$$
 where $\nabla_g \rho$ is a complex vector field and $|\nabla_g \rho|_g^2 = \langle \nabla_g \rho , \nabla_g \rho \rangle_g$ is computed using the bilinear extension of the Riemaniann scalar product to the complexified tangent bundle. Also we have 
 $$e^\frac{\rho}{h}ih^2d^*(Ae^{-\frac{\rho}{h}}a) = ih^2d^*(Aa) +ih \langle A, d\rho\rangle_g a,$$
 and
 $$-ih^2e^\frac{\rho}{h} \langle A, d(e^{-\frac{\rho}{h}}a)\rangle_g = -ih^2 \langle A, da\rangle_g + ih \langle A, d\rho \rangle_g a.$$
 So conjugating the magnetic Schr\"odinger operator and writing $A=(A-A_\tau)+A_\tau$, where $A_\tau\in C^\infty_0(\R\times M_0^0,T^*(\R\times M_0^0) )$ is given in Proposition  \ref{prop_A-reg}, we get
 \begin{align*}
 e^\frac{\rho}{h}h^2 L_{A,q}(e^{-\frac{\rho}{h}} a) &= -h^2\Delta_g a + h(\Delta_g \rho)a + 2h \nabla_g \rho(a) - |\nabla_g \rho|_g^2 a- ih^2\langle A, da\rangle_g  \\
 &+2ih \langle (A-A_\tau), d\rho \rangle_g a +2ih \langle A_\tau, d\rho \rangle_g a +ih^2d^*(Aa)+h^2(\langle A,A\rangle_g +q)a.
  \end{align*}
  For \eqref{3.3} to be a solution to \eqref{3.2} we require $\rho$ to satisfy the eikonal equation 
  \begin{equation}
    \label{3.4}
      |\nabla_g \rho|_g^2 =0,
  \end{equation}
  and the amplitude $a$ to satisfy the regularized transport equation 
  \begin{equation}
    \label{3.5}
      (2i\langle A_\tau, d\rho \rangle_g + 2\nabla_g \rho)a + (\Delta_g \rho)a =0. 
  \end{equation}
  The remainder $r$ is determined by 
  \begin{equation}
    \label{3.6}
  \begin{split}
      e^\frac{\rho}{h}h^2 L_{A,q}(e^{-\frac{\rho}{h}}r) &= -(-h^2\Delta_g a-ih^2\langle A,da\rangle_g +2ih\langle A-A_\tau,d\rho\rangle_g a+ih^2d^*(Aa)\\
      &+h^2(\langle A,A\rangle_g+q)a).
      \end{split}
  \end{equation}
First, following \cite{DKSaU_2009}*{Section 5} and \cite{2018}*{Section 3}, we let $\omega\in D$ be a point such that $(x_1,\omega)\notin M$ for all $x_1\in \R$. We have the global coordinates on $M$ given by $x=(x_1,r,\theta)$, where  $(r,\theta)$ are the polar normal coordinates on $(D,g_0)$ with center $\omega$, i.e. $x' = \exp _\omega^D(r\theta)$ where $r>0$ and $\theta \in \mathbb{S}^{n-2}$. Here $\exp_\omega^D$ is the exponential map which takes its maximal domain in $T_\omega D$ diffeomorphically onto $D$ since $D$ is simple.  Following \cite{DKSaU_2009}*{Section 5} and \cite{2018}*{Section 3}, we  take 
\[
\rho = x_1 +i r.
\] 

As in \cite{2018}*{Section 3},  the transport equation \eqref{3.5} in the coordinates $x=(x_1,r,\theta)$  becomes   \begin{equation}
    \label{3.11}
      4\overline{\partial}a + \bigg(\overline{\partial} \log\bigg(\frac{|g|}{c^2}\bigg)\bigg)a+2i((A_\tau)_1+i(A_\tau)_r)a = 0,
  \end{equation}
  where $\bar\p =\frac{1}{2}(\p_{x_1}+i\p_r)$. Following \cite{2018}*{Section 3}, we choose a solution $a$ of the form 
  \begin{equation}
  \label{3.12}
      a = |g|^{-\frac{1}{4}}c^\frac{1}{2}e^{i\Phi_\tau}a_0(x_1,r)b(\theta),
  \end{equation}
  where the function $a_0$ is non-vanishing holomorphic so that $\overline{\partial}a_0 =0$,  the function $b(\theta)$ is smooth, and $\Phi_\tau$ solves a $\overline{\partial}$-equation, 
  \begin{equation}
  \label{3.13}
      \overline{\partial} \Phi_\tau = -\frac{1}{2}((A_\tau)_1+i(A_\tau)_r).
  \end{equation}
  Note that the right hand side of \eqref{3.13} belongs to $C^\infty_0(\R^2)$.  The equation \eqref{3.13} is given in the global coordinates $(x_1,r)$ so using the fundamental solution $\frac{1}{\pi(x_1+ir)}\in L^1_{loc}(\mathbb{R}^2)$ of the $\overline{\partial}$ operator we can take 
  \begin{equation}
    \label{3.14}
      \Phi_\tau(x_1,r,\theta) = -\frac{1}{2}\frac{1}{\pi(x_1+ir)}*((A_\tau)_1+i(A_\tau)_r).
  \end{equation}
  Here $*$ denotes the convolution in the variables $(x_1,r)$. 
  
 We shall now derive some estimates for the function $\Phi_\tau$ and its first two derivatives.  First using \eqref{prop_A-reg_3}, we get  
   \begin{equation}
    \label{3.15}
      \|\Phi_\tau\|_{L^\infty(M)} = \mathcal{O}(1), \hspace{5mm} \|\nabla_g \Phi_\tau\|_{L^\infty(M)} = \mathcal{O}(\tau^{-1}), \hspace{5mm} \|\Delta_g \Phi_\tau\|_{L^\infty(M)} = \mathcal{O}(\tau^{-2}), 
  \end{equation}
  as $\tau \to 0$. Next using \eqref{prop_A-reg_2} and Young's inequality, we obtain that 
 \begin{equation}
    \label{3.17}
      \|\Phi_\tau\|_{L^n(M)}=\mathcal{O}(1), \quad   \|\nabla_g\Phi_\tau\|_{L^n(M)}=\mathcal{O}(1), \quad  \|\Delta_g\Phi_\tau\|_{L^n(M)}=o(\tau^{-1}),
  \end{equation}
as $\tau\to 0$. For future reference, let us introduce 
 \[
 \Phi(x_1,r,\theta) = -\frac{1}{2} \frac{1}{\pi(x_1+ir)}* (A_1+iA_r) \in L^\infty(M).
 \]
 Using \eqref{prop_A-reg_1}, the fact that $A$ and $A_\tau$ have compact support in $\R\times M_0^0$,  and Young's inequality, we see that 
  \begin{equation}
    \label{3.16}
      \|\Phi_\tau - \Phi\|_{L^n(M)} = o(\tau),
  \end{equation}
  as $\tau \to 0$.

We shall next establish some estimates for the amplitude $a$ given by \eqref{3.12} and its first two derivatives in $L^\infty$ and $L^2$, which will be sufficient for our purposes. First \eqref{3.15} gives that 
 \begin{equation}
    \label{3.15_a}
      \|a\|_{L^\infty(M)} = \mathcal{O}(1), \hspace{5mm} \|\nabla_g a\|_{L^\infty(M)} = \mathcal{O}(\tau^{-1}), \hspace{5mm} \|\Delta_g a\|_{L^\infty(M)} = \mathcal{O}(\tau^{-2}), 
  \end{equation}
  as $\tau \to 0$. Furthermore, using \eqref{3.17} and \eqref{3.12}, and that fact that $L^n(M)\subset L^2(M)$,  we get 
 \begin{equation}
    \label{3.17_a}
      \|a\|_{L^2(M)}=\mathcal{O}(1), \quad   \|\nabla_g a\|_{L^2(M)}=\mathcal{O}(1),
  \end{equation}
as $\tau\to 0$. Next we shall bound $\Delta_g a$ in $L^2(M)$, and in view of  \eqref{3.12}, it suffices to estimate $\Delta_g e^{i\Phi_\tau}$ in $L^2(M)$.  In doing so, we note that 
\begin{equation}
    \label{3.17_lap}
\Delta_g e^{i\Phi_\tau} = e^{i\Phi_\tau}(i\Delta_g \Phi_\tau - \langle \nabla_g \Phi_\tau, \nabla_g \Phi_\tau \rangle_g).
\end{equation} 
Estimating the first term in the right hand side of \eqref{3.17_lap},  using \eqref{3.17} and \eqref{3.15},  we get
\begin{equation}
    \label{3.17_lap_1}
  \|e^{i\Phi_\tau} \Delta_g \Phi_\tau\|_{L^2} \leq \|e^{i\Phi_\tau}\|_{L^\infty}\|\Delta_g \Phi_\tau\|_{L^2} \leq C \underbrace{\|e^{i\Phi_\tau}\|_{L^\infty}}_{=\mathcal{O}(1)} \underbrace{\|\Delta_g \Phi_\tau\|_{L^n}}_{=o(\tau^{-1})}= o(\tau^{-1}).
\end{equation}  
We shall now estimate the second term in \eqref{3.17_lap}. First we have 
\begin{equation}
    \label{3.17_lap_2}
\|e^{i\Phi_\tau} \langle \nabla_g \Phi_\tau , \nabla_g \Phi_\tau\ \rangle_g \|_{L^2} \leq \underbrace{\|e^{i\Phi_\tau}\|_{L^\infty}}_{=\mathcal{O}(1)} \underbrace{\|\langle \nabla_g \Phi_\tau , \nabla_g \Phi_\tau \rangle_g \|_{L^2}}_{\leq \|\nabla_g \Phi_\tau\|_{L^4}^2 \text{ by Cauchy-Schwarz}} \leq C \|\nabla_g \Phi_\tau\|_{L^4}^2.
\end{equation}

Note that when  $n\geq 4$, by H\"older's inequality,  we have 
\begin{equation}
    \label{3.17_lap_3}
\|\nabla_g \Phi_\tau\|_{L^4} \leq C\|\nabla_g \Phi_\tau\|_{L^n} = \mathcal{O}(1).
\end{equation}
   When  $n=3$, using H\"older's inequality and \eqref{3.17}, \eqref{3.15},  we get 
\begin{equation}
    \label{3.17_lap_4}
 \|\nabla_g \Phi_\tau\|_{L^4}^2 \leq \underbrace{\|\nabla_g \Phi_\tau\|_{L^\infty}^{1/2}}_{= \mathcal{O}(\tau^{-1/2})} \underbrace{\|\nabla_g \Phi_\tau\|_{L^3}^{3/2}}_{=\mathcal{O}(1)}= \mathcal{O}(\tau^{-1/2})  \leq o(\tau^{-1}).
 \end{equation}
 Combining \eqref{3.17_lap},  \eqref{3.17_lap_1},  \eqref{3.17_lap_2},   \eqref{3.17_lap_3}, and  \eqref{3.17_lap_4}, we obtain that    
 \begin{equation}
            \label{3.22}
       \|\Delta_g a\|_{L^2} \le \mathcal{O}(1) \|\Delta_g e^{i\Phi_\tau}\|_{L^2} +\mathcal{O}(1)= o(\tau^{-1}).
 \end{equation}

  Now we will solve \eqref{3.6} for the remainder term $r$. First note that the right hand side of \eqref{3.6} is given by 
  \begin{equation}
    \label{3.23}
      \begin{split}
          v &:= -(-h^2\Delta_g a-ih^2\langle A,da\rangle_g +2ih\langle A-A_\tau,d\rho\rangle_g a+ih^2d^*(Aa)\\
            &+h^2(\langle A,A\rangle_g+q)a).
      \end{split}
  \end{equation}
  Using that 
 \[
 d^*(Aa)=(d^*A)a-\langle A,da\rangle_g ,
 \] 
 we get  
   \begin{equation}
    \label{3.23_new}
      \begin{split}
          v &= -(-h^2\Delta_g a-2ih^2\langle A,da\rangle_g +2ih\langle A-A_\tau,d\rho\rangle_g a+ih^2(d^*A)a\\
            &+h^2(\langle A,A\rangle_g+q)a).
      \end{split}
  \end{equation}
  
  We first estimate $\|v\|_{L^2(M)}$. First we have by \eqref{3.22},
  \begin{equation}
    \label{3.24}
      \|-h^2\Delta_g a\|_{L^2}  = h^2o(\tau^{-1}).
  \end{equation}
  Secondly, by \eqref{3.17_a},  we have
  \begin{equation}
    \label{3.25}
      \|h^2\langle A, da\rangle_g\|_{L^2} =\mathcal{O}( h^2) \|A\|_{L^\infty} \|da\|_{L^2} = \mathcal{O}(h^2).
  \end{equation}
  Then  using Cauchy-Schwarz inequality, \eqref{prop_A-reg_1}, and \eqref{3.15_a}, we have
  \begin{equation}
    \label{3.26}
      \|h\langle A-A_\tau, d\rho\rangle_g a\|_{L^2} \le \mathcal{O}(h)\|a\|_{L^\infty} \|A-A_\tau\|_{L^2} = o(\tau h).
  \end{equation}
 Using that $d^*A\in L^n(M)$ and that $L^n(M)\subset L^2(M)$, we get   
   \begin{equation}
    \label{3.28}
      \|h^2 (d^*A+q)a\|_{L^2} \le  \mathcal{O}(h^2)\|a\|_{L^\infty}\|d^*A+q\|_{L^n} =\mathcal{O}(h^2).
  \end{equation}
 Finally, we have  
 \begin{equation}
    \label{3.28_aa}
      \|h^2  \langle A,A\rangle_g a\|_{L^2}=\mathcal{O}(h^2).
  \end{equation}  
  Putting together \eqref{3.23_new},  \eqref{3.24}, \eqref{3.25}, \eqref{3.26}, \eqref{3.28}, \eqref{3.28_aa}, we get 
  \begin{equation}
    \label{3.29}
      \|v\|_{L^2(M)} =o(h^2\tau^{-1}+\tau h) +\mathcal{O}(h^2) = o(h(\tau +h \tau^{-1})).
  \end{equation}
  Note $\tau \to \tau + h \tau^{-1}$ has minimum at $\tau = \sqrt{h}$ so using this we get $\|v\|_{L^2} =o(h^{3/2})$. So by Theorem \ref{thm_solvability}, we obtain the existence of $r\in H^1(M)$ with $\|r\|_{H^1_{scl}(M^0)} = o(h^{1/2})$.

Let us summarize the discuss of this section in the following proposition. 
\begin{prop}
\label{prop3.1}
Assume that $(M,g)$ satisfies \eqref{eq_3_emb} and \eqref{3.1}. Let $A \in (W^{1,n}\cap L^\infty)(M^0,T^*M^0)$,  $q\in L^n(M,\mathbb{C})$. Let $\omega\in D$ be a point such that $(x_1,\omega)\notin M$ for all $x_1\in \R$, and let $(r,\theta)$ be the polar normal coordinates on $(D,g_0)$ with center $\omega$. Then for all $h>0$ small enough, there exists a solution $u \in H^1(M^0)$ to 
 \[
 L_{A,q}u=0  \text{ in } \mathcal{D}'(M^0),
 \]
 of the form 
  \[
 u=e^\frac{-\rho}{h}(a+r),
 \]
  where $\rho = x_1 + i r$,  
 \[
  a = |g|^{-\frac{1}{4}}c^\frac{1}{2}e^{i\Phi_h}a_0(x_1,r)b(\theta),
  \]
  $a_0$ is a non-vanishing holomorphic function so that $(\p_{x_1}+i\p_r)a_0 =0$,  $b(\theta)$ is smooth. Here $\Phi_h\in C^\infty(M)$ such that 
 \begin{align*}
      \|a\|_{L^\infty(M)} = \mathcal{O}(1), \hspace{5mm} \|\nabla_g a\|_{L^\infty(M)} = \mathcal{O}(h^{-1/2}), \hspace{5mm} \|\Delta_g a\|_{L^\infty(M)} = \mathcal{O}(h^{-1}), \\
 \|a\|_{L^2(M)}=\mathcal{O}(1), \quad   \|\nabla_g a\|_{L^2(M)}=\mathcal{O}(1),\quad  \|\Delta_g a\|_{L^2} = o(h^{-1/2}).
  \end{align*}
as $h\to 0$, and 
\[
 \|\Phi_h - \Phi\|_{L^n(M)} = o(h^{1/2}),
\]
as $h\to 0$, 
where 
\[
 \Phi(x_1,r,\theta) = -\frac{1}{2} \frac{1}{\pi(x_1+ir)}* (A_{x_1}+iA_r) 
 \]
 with $A=A_{x_1}dx_1+A_rdr+A_\theta d\theta$. Furthermore, the remainder $r$ is such that $\|r\|_{H^1_{scl}(M^0)} = o(h^{1/2})$, as $h\to 0$.
\end{prop}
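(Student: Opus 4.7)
The proof essentially consists of collecting and repackaging the construction carried out in the section, with the single optimization $\tau=\sqrt{h}$. The plan is to take the ansatz $u = e^{-\rho/h}(a+r)$ with phase $\rho = x_1 + ir$, so that $\rho$ solves the eikonal equation $|\nabla_g\rho|_g^2=0$ in the coordinates \eqref{3.1}; this kills the $h^0$ term after conjugation. I would then define the amplitude $a$ via \eqref{3.12}, where $a_0$ is any chosen nonvanishing holomorphic function, $b(\theta)$ any smooth function, and $\Phi_\tau$ is the solution \eqref{3.14} of the $\overline{\partial}$-equation \eqref{3.13} produced from the regularized one-form $A_\tau$ of Proposition \ref{prop_A-reg}. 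This choice makes $a$ satisfy the regularized transport equation \eqref{3.5}, so after conjugation only the error term $v$ in \eqref{3.23_new} remains on the right-hand side.

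Next I would set $\tau = \sqrt{h}$ and define $\Phi_h := \Phi_{\sqrt{h}}$. With this choice, the pointwise and $L^n$ estimates \eqref{3.15}, \eqref{3.17} on $\Phi_\tau$ together with the formula \eqref{3.12} immediately yield the stated bounds on $a$, $\nabla_g a$, $\Delta_g a$ in $L^\infty$, and on $a$ and $\nabla_g a$ in $L^2$. The comparison \eqref{3.16} gives $\|\Phi_h - \Phi\|_{L^n(M)} = o(h^{1/2})$ as required. The $L^2$ estimate $\|\Delta_g a\|_{L^2} = o(h^{-1/2})$ is the most delicate point among these and follows from \eqref{3.22} with $\tau=\sqrt{h}$; this is where the proof bifurcates according to whether $n\ge 4$ (where \eqref{3.17_lap_3} gives a uniform bound) or $n=3$ (where one interpolates via \eqref{3.17_lap_4}).

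For the remainder, I would invoke the solvability result Theorem \ref{thm_solvability}: the function $r$ must satisfy $e^{\varphi/h}(h^2 L_{A,q})e^{-\varphi/h}\tilde r = e^{i\psi/h}v$ with $\tilde r = e^{i\psi/h}r$ (absorbing the imaginary part of the phase into the right-hand side, which does not affect $L^2$ norms). The crucial $L^2$ bound on $v$ is already established as $\|v\|_{L^2(M)} = o(h(\tau + h\tau^{-1}))$ in \eqref{3.29}, minimized at $\tau=\sqrt{h}$ to give $\|v\|_{L^2} = o(h^{3/2})$. Since $\|\cdot\|_{H^{-1}_{scl}} \le \|\cdot\|_{L^2}$, Theorem \ref{thm_solvability} then produces $r \in H^1(M^0)$ with $\|r\|_{H^1_{scl}(M^0)} \le Ch^{-1}\|v\|_{H^{-1}_{scl}} = o(h^{1/2})$.

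The main subtlety, and the step that drives the choice $\tau=\sqrt{h}$, is balancing the two competing error terms in \eqref{3.29}: the contribution $h^2\|\Delta_g a\|_{L^2} = o(h^2\tau^{-1})$ coming from the smoothness cost of regularization, and the contribution $h\|\langle A-A_\tau,d\rho\rangle_g a\|_{L^2} = o(\tau h)$ coming from the approximation error. The geometric mean $\tau=\sqrt{h}$ equalizes these and produces a remainder decaying strictly faster than $h^{1/2}$, which will be sufficient in the recovery arguments of Sections \ref{sec_recovery_magnetic} and \ref{sec_recovery_electric}.
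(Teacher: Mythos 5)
Your proposal is correct and follows essentially the same route as the paper: the paper's "proof" of Proposition \ref{prop3.1} is precisely the construction of Section \ref{sec_CGO} (eikonal equation for $\rho=x_1+ir$, regularized transport equation solved via the Cauchy transform \eqref{3.14}, the estimates \eqref{3.15}--\eqref{3.22}, the error bound \eqref{3.29}, and Theorem \ref{thm_solvability}), with the same optimization $\tau=\sqrt{h}$ balancing $o(h^2\tau^{-1})$ against $o(\tau h)$. Your remark about conjugating away the imaginary part of the phase before invoking Theorem \ref{thm_solvability} is a detail the paper leaves implicit, and you handle it correctly since unimodular factors preserve the $L^2$ norm of $v$ and distort the semiclassical $H^1$ norm of $r$ only by a bounded factor.
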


\section{Recovering the magnetic field}

\label{sec_recovery_magnetic}

We shall start this section by making several general observations valid on a general smooth compact Riemannian manifold with boundary.

\begin{prop}
\label{prop_trace_magnetic}
Let $(M,g)$ be a smooth compact Riemannian manifold of dimension $n\ge 3$ with boundary, and let $\nu$ be the unit outer normal to $\p M$. 
Let $A\in W^{1,n}(M^0, T^*M^0)$ and let $u\in H^1(M^0)$. Then we have $(\langle A,\nu\rangle_g u)|_{\p M}\in H^{-\frac{1}{2}}(\p M)$. 
\end{prop}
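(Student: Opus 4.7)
The plan is to exhibit $(\langle A,\nu\rangle_g u)|_{\partial M}$ as a concrete element of $L^2(\partial M)$, so that its membership in $H^{-1/2}(\partial M)$ will follow from the continuous embedding $L^2(\partial M) \hookrightarrow H^{-1/2}(\partial M)$ on the compact boundary. Both boundary traces individually live in good enough $L^p$ spaces that the product falls into $L^2$ by Hölder.

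First, I would apply the standard trace theorem componentwise in local boundary coordinates to $A \in W^{1,n}(M^0, T^*M^0)$, obtaining $A|_{\partial M} \in W^{1-1/n,\,n}(\partial M, T^*M|_{\partial M})$; contracting with the smooth outward unit normal then gives $\langle A, \nu\rangle_g|_{\partial M} \in W^{1-1/n,\,n}(\partial M)$. Since $(1-1/n)\cdot n = n-1 = \dim \partial M$, this is precisely the critical fractional Sobolev case, and the endpoint embedding $W^{1-1/n,\,n}(\partial M) \hookrightarrow L^q(\partial M)$ for every finite $q$ yields in particular $\langle A, \nu\rangle_g|_{\partial M} \in L^{2(n-1)}(\partial M)$.

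Next, the trace theorem for $u \in H^1(M^0)$ together with the Sobolev embedding on the $(n-1)$-dimensional boundary places $u|_{\partial M}$ in $H^{1/2}(\partial M) \hookrightarrow L^{2(n-1)/(n-2)}(\partial M)$. Since $1/(2(n-1)) + (n-2)/(2(n-1)) = 1/2$, Hölder's inequality places the pointwise product $\langle A, \nu\rangle_g u|_{\partial M}$ in $L^2(\partial M)$, with norm controlled by $\|A\|_{W^{1,n}(M^0)}\|u\|_{H^1(M^0)}$. Composing with $L^2(\partial M) \hookrightarrow H^{-1/2}(\partial M)$ then delivers the conclusion.

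The only non-routine ingredient is the endpoint Sobolev embedding $W^{1-1/n,\,n}(\partial M) \hookrightarrow L^q(\partial M)$ for all $q < \infty$, which is classical but should be cited rather than reproved. If one wishes to avoid this endpoint, a more intrinsic alternative is a Green's formula argument: the $1$-form $uA$ lies in $L^2(M, T^*M)$ by Hölder (using $u \in L^{2n/(n-2)}(M)$ and $A \in L^n(M)$), and via the identity $d^*(uA) = (d^*A)u - \langle du, A\rangle_g$ one checks that the functional $\phi \mapsto \int_M \langle uA, d\tilde\phi\rangle_g\, dV_g + \int_M d^*(uA)\,\tilde\phi\, dV_g$, evaluated on any extension $\tilde\phi \in H^1(M)$ of $\phi \in H^{1/2}(\partial M)$, is both independent of the extension (by density and smooth Green's formula) and bounded by $\|\phi\|_{H^{1/2}(\partial M)}$, which exhibits the trace directly as an element of $H^{-1/2}(\partial M)$.
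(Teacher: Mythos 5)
Your argument is correct, but it routes through a stronger intermediate statement than the paper does. The paper's proof never shows that $\langle A,\nu\rangle_g u|_{\p M}$ lies in $L^2(\p M)$: it tests directly against $v\in H^{\frac12}(\p M)$ and applies a three-factor H\"older inequality with exponents $\frac{1}{n-1}+\frac{n-2}{2(n-1)}+\frac{n-2}{2(n-1)}=1$, so it only needs $\langle A,\nu\rangle_g|_{\p M}\in L^{n-1}(\p M)$, which follows from the trivial inclusion $W^{1-\frac1n,n}(\p M)\subset L^n(\p M)\subset L^{n-1}(\p M)$ — no critical Sobolev embedding is required. Your version instead places the product in $L^2(\p M)$ via the splitting $L^{2(n-1)}\times L^{2(n-1)/(n-2)}\to L^2$, which forces you to invoke the endpoint embedding $W^{1-\frac1n,n}(\p M)\hookrightarrow L^q(\p M)$ for all finite $q$ (your exponent arithmetic is right, and this embedding is classical). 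Note that this endpoint fact is essentially the content of Proposition \ref{prop_trace_magnetic_2} in the paper, which is proved there without the critical case by lowering the integrability to $W^{1,q}$, $q<n$, first and letting $q\to n$; citing that proposition would let you avoid the endpoint citation entirely. What your route buys is the stronger conclusion $\langle A,\nu\rangle_g u|_{\p M}\in L^2(\p M)$; what the paper's route buys is economy, since the weaker $H^{-\frac12}$ duality bound \eqref{eq_4_trace_A_bound} is all that is used later (e.g.\ in the proof of the magnetic Green formula). Your alternative Green's-formula argument is also sound — the exponent checks go through with $uA\in L^2(M,T^*M)$ and $d^*(uA)\in L^{\frac{2n}{n+2}}(M)$ paired against $\tilde\phi\in L^{\frac{2n}{n-2}}(M)$ — and has the merit of requiring no boundary trace theory for $A$ at all, though it identifies the trace only as a functional rather than a function.
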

\begin{proof}
By the Sobolev trace theorem, we have $u|_{\p M}\in  H^{\frac{1}{2}}(\p M)$. Furthermore, 
\[
\langle A,\nu\rangle_g|_{\p M}\in  W^{1-\frac{1}{n},n}(\p M)\subset L^n(\p M),
\]
see \cite{Brezis_book}*{page 315}.  Let $v\in H^{\frac{1}{2}}(\p M)$.  By Sobolev's embedding, we have 
\begin{equation}
\label{eq_magn_sob}
H^{\frac{1}{2}}(\p M) \subset  L^{\frac{2(n-1)}{n-2}}(\p M)
\end{equation}
see \cite{Sogge_book}*{Theorem 0.3.8, page 28}. Using  H\"older's inequality and \eqref{eq_magn_sob}, we get 
\begin{equation}
\label{eq_4_trace_A_bound}
\begin{aligned}
\bigg|\int_{\p M} \langle A,\nu\rangle_g uvdS_g\bigg|&\le \|\langle A,\nu\rangle_g\|_{L^{n-1}(\p M)}\|u\|_{L^{\frac{2(n-1)}{n-2}}(\p M)}\|v\|_{L^{\frac{2(n-1)}{n-2}}(\p M)}\\
&\le  C\|\langle A,\nu\rangle_g\|_{L^{n}(\p M)}\|u\|_{H^{\frac{1}{2}}(\p M)}\|v\|_{H^{\frac{1}{2}}(\p M)},
\end{aligned}
\end{equation}
showing the result. 
\end{proof}

\begin{prop}
\label{prop_trace_magnetic_2}
Let $(M,g)$ be a smooth compact Riemannian manifold of dimension $n\ge 3$ with boundary, and let $\nu$ be the unit outer normal to $\p M$. 
Let $A\in W^{1,n}(M^0, T^*M^0)$. Then $\langle A,\nu\rangle_g|_{\p M}\in L^p(\p M)$ for all $1\le p< \infty$. 
\end{prop}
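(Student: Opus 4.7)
My plan is to deduce the conclusion from the Sobolev trace theorem applied to $A \in W^{1,n}(M^0, T^*M^0)$ combined with a critical-exponent Sobolev embedding on the boundary. The point is that the numerology $s p = (1 - 1/n) \cdot n = n - 1 = \dim \partial M$ puts us exactly in the borderline case for the Sobolev embedding on $\partial M$, which buys $L^p$ integrability for every finite $p$.

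More precisely, I would first invoke the trace theorem (in the form used already in Proposition \ref{prop_trace_magnetic}, see \cite{Brezis_book}*{page 315}) to conclude that
\[
A|_{\p M} \in W^{1 - 1/n, n}(\p M, T^*M|_{\p M}).
\]
Since $\nu$ is a smooth vector field along $\p M$ and the pointwise pairing $\langle \cdot, \nu \rangle_g$ is a bounded multiplication by a smooth tensor, the scalar function $\langle A, \nu\rangle_g|_{\p M}$ lies in $W^{1-1/n, n}(\p M)$.

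The second step is the critical Sobolev embedding on the $(n-1)$-dimensional closed manifold $\p M$: for $s = 1 - 1/n$ and $p = n$ one has $s p = n - 1 = \dim \p M$, so
\[
W^{1 - 1/n, n}(\p M) \hookrightarrow L^q(\p M) \quad \text{for every } 1 \le q < \infty.
\]
This is the standard borderline embedding; it can be reduced to the Euclidean version via a finite partition of unity on $\p M$ and boundary coordinate charts, applying the Euclidean embedding $W^{s,p}(\R^{n-1}) \hookrightarrow L^q(\R^{n-1})$ for all finite $q$ in the critical case $s p = n-1$. Combining the two steps yields the claim.

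I do not anticipate a serious obstacle: the only subtlety is being careful about the critical-case embedding (in particular that we land in $L^q$ for all $q<\infty$ but cannot take $q = \infty$), and ensuring that the smooth multiplier $\nu$ does not degrade the regularity class, both of which are routine. One slight variant of the proof would be to bypass the trace theorem and work directly from $A \in W^{1,n}(M^0)$ on the $n$-dimensional manifold (which embeds into $L^q(M)$ for all $q < \infty$), but this alone does not give boundary values, so routing through $W^{1-1/n,n}(\p M)$ and the critical embedding on $\p M$ is the cleanest path.
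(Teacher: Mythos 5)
Your argument is correct, and it follows the same two-step strategy as the paper (trace theorem, then Sobolev embedding on the $(n-1)$-dimensional boundary), but the embedding you invoke is genuinely different. You take the trace at the critical exponent, landing in $W^{1-\frac{1}{n},n}(\p M)$ with $sp=(1-\tfrac1n)n=n-1=\dim \p M$, and then appeal to the borderline embedding $W^{1-\frac{1}{n},n}(\p M)\hookrightarrow L^q(\p M)$ for all finite $q$. The paper instead stays strictly subcritical: since $M$ is compact, $A\in W^{1,q}(M^0,T^*M^0)$ for every $1\le q\le n$, so the trace lies in $W^{1-\frac{1}{q},q}(\p M)\subset L^{\frac{q(n-1)}{n-q}}(\p M)$ for each $q<n$, and letting $q\uparrow n$ sends the exponent $\frac{q(n-1)}{n-q}$ to infinity, covering every finite $p$. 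The two routes prove the same thing; the paper's version only needs the standard subcritical embedding (the form cited from Sogge's book and already used in Proposition \ref{prop_trace_magnetic}), at the cost of running a one-parameter family of embeddings, whereas yours is a single application of a slightly more delicate critical-case statement. Your cautionary remarks (no $L^\infty$ conclusion, smooth multiplication by $\nu$ is harmless) are both accurate.
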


\begin{proof}
We have $A\in W^{1,q}(M^0, T^*M^0)$ for all $1\le q\le n$. Thus, by the trace theorem, see \cite{Brezis_book}*{page 315}, and Sobolev's embedding, see \cite{Sogge_book}*{Theorem 0.3.8, page 28}, we get 
\[
\langle A,\nu\rangle_g|_{\p M}\in W^{1-\frac{1}{q},q}(\p M)\subset L^\frac{q(n-1)}{n-q}(\p M), \quad 1\le q< n,
\]
 which shows the result. 
\end{proof}

We shall need the following Green formula for the magnetic Schr\"odinger operator $L_{A,q}$ with $A\in W^{1,n}(M^0, T^*M^0)$ and $q\in L^n(M,\mathbb{C})$. 

\begin{prop}
Let $(M,g)$ be a smooth compact Riemannian manifold of dimension $n\ge 3$ with boundary, and let $\nu$ be the unit outer normal to $\p M$. Let  $A\in W^{1,n}(M^0, T^*M^0)$ and $q\in L^n(M,\mathbb{C})$. We have the magnetic Green formula, 
\begin{equation}
\label{eq_4_Green_1}
\begin{aligned}
(L_{A,q} u, &v)_{L^2(M)}- (u, L_{\overline{A},\overline{q}} v)_{L^2(M)}\\
&=-\langle \p_\nu u+ i \langle A,\nu\rangle_gu, \overline{v}\rangle_{H^{-\frac{1}{2}}(\p M)\times H^{\frac{1}{2}}(\p M)}+ \langle u, \overline{\p_\nu v+ i \langle \overline{A},\nu\rangle_gv}\rangle_{H^{\frac{1}{2}}(\p M)\times H^{-\frac{1}{2}}(\p M)},
\end{aligned}
\end{equation}
for all $u, v\in H^1(M^0)$ such that $\Delta_g u, \Delta_g v\in L^2(M)$. Here $\langle\cdot,\cdot\rangle_{H^{-\frac{1}{2}}(\p M)\times H^{\frac{1}{2}}(\p M)}$ is the distributional duality between $H^{-\frac{1}{2}}(\p M)$ and $H^{\frac{1}{2}}(\p M)$. 
\end{prop}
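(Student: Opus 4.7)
The plan is to verify \eqref{eq_4_Green_1} first for $u,v\in C^\infty(M)$ by a direct integration by parts, exploiting the factorization $L_{A,q}=d_{\overline{A}}^*d_A+q$ built into the definition \eqref{1.1}, and then to pass to general $u,v$ by density. Since $(qu,v)_{L^2(M)}=(u,\overline{q}v)_{L^2(M)}$, the zeroth-order contributions cancel in the difference, so it suffices to treat
\begin{equation*}
(d_{\overline{A}}^*d_Au,v)_{L^2(M)}-(u,d_A^*d_{\overline{A}}v)_{L^2(M)}.
\end{equation*}
The adjoint relation for $d^*$ combined with \eqref{eq_int_2} yields
\[
(d_{\overline{A}}^*d_Au,v)_{L^2(M)}=(d_Au,d_{\overline{A}}v)_{L^2(M,T^*M)}-\int_{\p M}(d_Au)(\nu)\,\overline{v}\,dS_g,
\]
and, after swapping the roles of $u$ and $v$ and taking the complex conjugate, an analogous identity for $(u,d_A^*d_{\overline{A}}v)$. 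Subtracting the two cancels the bulk pairing $(d_Au,d_{\overline{A}}v)$; since $(d_Au)(\nu)=\p_\nu u+i\langle A,\nu\rangle_gu$ and $(d_{\overline{A}}v)(\nu)=\p_\nu v+i\langle\overline{A},\nu\rangle_gv$, one is left with exactly the boundary contributions on the right hand side of \eqref{eq_4_Green_1}.

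For the density step, I would approximate $u,v$ by sequences $u_k,v_k\in C^\infty(M)$ converging in the graph norm $\|\cdot\|_{H^1(M^0)}+\|\Delta_g\cdot\|_{L^2(M)}$, a standard construction via extension, mollification and cut-off as in \cite{Bukhgeim_Uhlmann_2002}; continuity of the normal trace from this graph space into $H^{-\frac{1}{2}}(\p M)$ then gives $u_k|_{\p M}\to u|_{\p M}$ in $H^{\frac{1}{2}}(\p M)$ and $\p_\nu u_k|_{\p M}\to\p_\nu u|_{\p M}$ in $H^{-\frac{1}{2}}(\p M)$, and similarly for $v$. Under the hypotheses $A\in(W^{1,n}\cap L^\infty)(M^0,T^*M^0)$ and $q\in L^n(M,\C)$, expanding $L_{A,q}$ as in \eqref{1.1} and combining H\"older's inequality with the Sobolev embedding $H^1(M^0)\hookrightarrow L^{\frac{2n}{n-2}}(M)$ shows that each of the terms $(d^*A)u_k$, $\langle A,du_k\rangle_g$, and $(\langle A,A\rangle_g+q)u_k$ converges in $L^2(M)$, so that $L_{A,q}u_k\to L_{A,q}u$ in $L^2(M)$; hence the left hand side of \eqref{eq_4_Green_1} passes to the limit. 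For the right hand side, Proposition \ref{prop_trace_magnetic} provides the essential continuity of multiplication $H^{\frac{1}{2}}(\p M)\ni w\mapsto\langle A,\nu\rangle_gw\in H^{-\frac{1}{2}}(\p M)$, so that $\langle A,\nu\rangle_gu_k\to\langle A,\nu\rangle_gu$ in $H^{-\frac{1}{2}}(\p M)$ and each duality pairing converges.

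The main obstacle is to give a meaning to the boundary terms in the first place: at the regularity under consideration neither $\p_\nu u|_{\p M}$ nor $\langle A,\nu\rangle_gu|_{\p M}$ is classically defined. The former sits in $H^{-\frac{1}{2}}(\p M)$ thanks to $\Delta_gu\in L^2(M)$, while the latter is controlled precisely by Proposition \ref{prop_trace_magnetic} via the trace inclusion $A|_{\p M}\in W^{1-\frac{1}{n},n}(\p M)\subset L^n(\p M)$ combined with the boundary Sobolev embedding \eqref{eq_magn_sob}. Together these ensure that the right hand side of \eqref{eq_4_Green_1} is a well-defined sum of $H^{-\frac{1}{2}}$--$H^{\frac{1}{2}}$ duality pairings, depending continuously on $u$ and $v$ in the graph topology, and the density argument then closes the proof.
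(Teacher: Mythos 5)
Your argument is correct, but it is organized differently from the paper's. The paper keeps $u,v$ general throughout: it quotes the Green formula \eqref{eq_4_Green_2} for $-\Delta_g$ on the graph space $\{u\in H^1:\Delta_g u\in L^2\}$ from \cite{Bukhgeim_Uhlmann_2002}, expands $L_{A,q}$ as in \eqref{1.1}, notes that the zeroth-order terms contribute symmetrically (\eqref{eq_4_Green_2_1}), and then handles the single first-order term $-2i\langle A,du\rangle_g$ by integrating by parts first for smooth $A$ (\eqref{eq_4_Green_3}) and passing to $A\in W^{1,n}$ by density \emph{in the potential}, using the bounds \eqref{eq_4_Green_4} and \eqref{eq_4_trace_A_bound}. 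You instead exploit the factorization $L_{A,q}=d_{\overline A}^*d_A+q$ to make the cancellation of the bulk pairing $(d_Au,d_{\overline A}v)$ immediate and to produce the boundary terms directly in the Cauchy-data form $(d_Au)(\nu)=\p_\nu u+i\langle A,\nu\rangle_g u$, at the price of restricting to smooth $u,v$ first and then running a density argument \emph{in the solutions} with respect to the graph norm. Your route is algebraically cleaner; the paper's route avoids constructing smooth approximations of $u,v$ (it only needs the already-quoted trace theory for the Laplacian) and its density step in $A$ is a linear perturbation that is trivially controlled by the estimates \eqref{eq_4_Green_4}. Both are legitimate, and the continuity statements you invoke (normal trace on the graph space, the multiplication bound of Proposition \ref{prop_trace_magnetic}) are exactly what is needed to pass to the limit on both sides.

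Two small points you should make explicit. First, even in your ``smooth $u,v$'' step the identity $(d^*w,v)=(w,dv)-\int_{\p M}\langle w,\nu\rangle_g\overline v\,dS_g$ with $w=d_Au$ is not classical, since $A$ is only $W^{1,n}$; it requires its own (easy) approximation of $A$ by smooth one-forms, controlled by precisely the estimates \eqref{eq_4_Green_4} and \eqref{eq_4_trace_A_bound} of the paper. Second, the proposition as stated assumes only $A\in W^{1,n}$, whereas your limit argument for the left-hand side uses $A\in L^\infty$ to put $\langle A,du_k\rangle_g$ in $L^2$; without that assumption one should interpret $(L_{A,q}u,v)_{L^2(M)}$ as an $L^{\frac{2n}{n+2}}$--$L^{\frac{2n}{n-2}}$ pairing and use $v_k\to v$ in $H^1\hookrightarrow L^{\frac{2n}{n-2}}$. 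Neither issue affects the validity of the approach.
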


\begin{proof}
Let $u, v\in H^1(M^0)$ be such that $\Delta_g u, \Delta_g v\in L^2(M)$. 
We have 
\begin{equation}
\label{eq_4_Green_2}
(-\Delta_g u, v)_{L^2(M)}- (u, (-\Delta_g v))_{L^2(M)}= \langle u, \overline{\p_\nu v}\rangle_{H^{\frac{1}{2}}(\p M)\times H^{-\frac{1}{2}}(\p M)}-\langle \p_\nu u, \overline{v}\rangle_{H^{-\frac{1}{2}}(\p M)\times H^{\frac{1}{2}}(\p M)},
\end{equation}
see \cite{Bukhgeim_Uhlmann_2002}, \cite{ferreira2006determining}. We also have
\begin{equation}
\label{eq_4_Green_2_1}
  ( i(d^*A)u +(\langle A,A\rangle_g +q)u, v)_{L^2(M)}=(u, -i(d^*\overline{A})v+  (\langle \overline{A},\overline{A}\rangle_g +\overline{q}) v)_{L^2(M)}.
\end{equation}  
 Assuming first that $A\in C^\infty(M, T^*M)$ and computing in local coordinates, we get 
\begin{equation}
\label{eq_4_Green_3}
\begin{aligned}
(\langle A, d u\rangle_g, v)_{L^2(M)}&=(u, d^*(\overline{A}v))_{L^2(M)}+\int_{\p M} \langle A,\nu\rangle_g u\overline{v}dS_g\\
&=(u, (d^*\overline{A})v)_{L^2(M)}- (u, \langle \overline{A}, dv\rangle_g)_{L^2(M)}+\int_{\p M} \langle A,\nu\rangle_g u\overline{v}dS_g.
\end{aligned}
\end{equation}
Here we have used  \eqref{eq_1.1-useful_mag}.  Let us show that \eqref{eq_4_Green_3} extends to $A\in W^{1,n}(M^0, T^*M^0)$. In doing so, by H\"older's inequality and Sobolev's embedding, we get 
\begin{equation}
\label{eq_4_Green_4}
\begin{aligned}
&|(\langle A, d u\rangle_g, v)_{L^2(M)}|\le \|A\|_{L^n(M)}\|du\|_{L^2(M)}\|v\|_{L^{\frac{2n}{n-2}}(M)}\le C\|A\|_{L^n(M)}\|u\|_{H^1(M^0)}\|v\|_{H^1(M^0)},\\
&|(u, \langle \overline{A}, dv\rangle_g)_{L^2(M)}|\le C\|A\|_{L^n(M)}\|u\|_{H^1(M^0)}\|v\|_{H^1(M^0)},\\
&|(u, (d^*\overline{A})v)_{L^2(M)}|\le \|d^*A\|_{L^n(M)}\|u\|_{L^2(M)}\|v\|_{L^{\frac{2n}{n-2}}(M)} \le C\|d^*A\|_{L^n(M)}\|u\|_{H^1(M^0)}\|v\|_{H^1(M^0)}.
\end{aligned}
\end{equation}
Using the density of $C^\infty(M, T^*M)$ in $W^{1,n}(M^0, T^*M^0)$, together with the bounds \eqref{eq_4_Green_4} and \eqref{eq_4_trace_A_bound}, we obtain the claim. 

\noindent 
It follows from \eqref{eq_4_Green_3} that 
\begin{equation}
\label{eq_4_Green_5}
\begin{aligned}
(-2i \langle A, d u\rangle_g, v)_{L^2(M)}=(u, 2i (d^*\overline{A})v)_{L^2(M)}- (u, 2i\langle \overline{A}, dv\rangle_g)_{L^2(M)}\\
-\int_{\p M} i \langle A,\nu\rangle_g u\overline{v}dS_g+ \int_{\p M} u\overline{i \langle \overline{A},\nu\rangle_g  v}dS_g.
\end{aligned}
\end{equation}
Combining \eqref{eq_4_Green_2}, \eqref{eq_4_Green_2_1}, and \eqref{eq_4_Green_5}, in view of \eqref{1.1}, we obtain 
\eqref{eq_4_Green_1}. 
\end{proof}

In what follows we shall let $(M,g)$ be an admissible simply connected manifold of dimension $n\ge 3$ with connected boundary.  We shall need the following integral identity. 
\begin{prop} 
\label{prop_integral_identity}
Let  $A_j\in (W^{1,n}\cap L^\infty)(M^0, T^*M^0)$ and $q_j\in L^n(M,\mathbb{C})$, $j=1,2$. Assume that $C_{A_1,q_1}^\Gamma=C_{A_2,q_2}^\Gamma$. Then we have 
\begin{equation}
\label{eq_10_6_prop}
\begin{aligned}
\int_M i \langle A_1-A_2, u_1d\overline{u_2}-\overline{u_2}du_1\rangle_g dV_g+ \int_M (\langle A_1, A_1\rangle_g -\langle A_2,A_2\rangle_g+q_1-q_2)u_1\overline{u_2}dV_g\\
=-\int_{\p M\setminus \Gamma}\p_\nu (w_2-u_1)\overline{u_2}dS_g+ i \int_{\p M\setminus\Gamma} \langle  A_1-A_2,\nu\rangle_g u_1\overline{u_2}dS_g,
\end{aligned}
\end{equation}
for $u_1,u_2\in H^1(M^0)$ satisfying 
\begin{equation}
\label{eq_10_1}
L_{A_1,q_1} u_1=0, \quad L_{\overline{A_2}, \overline{q_2}}u_2=0, \quad \text{in}\quad \mathcal{D}'(M^0),
\end{equation}
and $w_2\in H^1(M^0)$ satisfying \begin{equation}
\label{eq_10_2}
L_{A_2,q_2} w_2=0 \quad \text{in}\quad \mathcal{D}'(M^0),
\end{equation}
such that
\begin{equation}
\label{eq_10_3}
w_2|_{\p M}=u_1|_{\p M}, \quad (\p_\nu w_2+ i \langle A_2,\nu\rangle_g w_2)|_{\Gamma}= (\p_\nu u_1+ i \langle A_1,\nu\rangle_g u_1)|_{\Gamma}.
\end{equation}
\end{prop}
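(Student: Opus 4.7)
The plan is to derive the identity as a standard consequence of the magnetic Green formula \eqref{eq_4_Green_1}, applied twice and combined with the definition of $L_{A,q}$ from \eqref{1.1} and the partial Cauchy data matching. Before invoking \eqref{eq_4_Green_1}, I would first verify that for each of $u_1,u_2,w_2\in H^1(M^0)$ the Laplacian lies in $L^2(M)$: from \eqref{1.1} and the PDEs they satisfy, the only nontrivial bounds needed are $\|(d^*A_j)u\|_{L^2}\le \|d^*A_j\|_{L^n}\|u\|_{L^{2n/(n-2)}}$ and $\|q_j u\|_{L^2}\le\|q_j\|_{L^n}\|u\|_{L^{2n/(n-2)}}$, both of which follow from H\"older's inequality and the Sobolev embedding $H^1\subset L^{2n/(n-2)}$. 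Thus \eqref{eq_4_Green_1} applies to each of the pairs $(u_1,u_2)$ and $(w_2,u_2)$.

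The core computation proceeds in two steps. First I would apply \eqref{eq_4_Green_1} with the operator $L_{A_2,q_2}$ to the pairs $(u_1,u_2)$ and $(w_2,u_2)$. Since $L_{\overline{A_2},\overline{q_2}}u_2 = 0$ and $L_{A_2,q_2}w_2 = 0$, subtracting the two resulting identities and using that $w_2|_{\p M} = u_1|_{\p M}$ makes both the $(u_1-w_2)$-interior duality term and the $i\langle A_2,\nu\rangle_g(w_2-u_1)$ boundary contribution vanish, leaving
\[
(L_{A_2,q_2}u_1,u_2)_{L^2(M)} \;=\; \langle \p_\nu(w_2-u_1),\overline{u_2}\rangle_{H^{-\frac{1}{2}}(\p M)\times H^{\frac{1}{2}}(\p M)}.
\]
Splitting the pairing over $\Gamma$ and $\p M\setminus\Gamma$, the Neumann matching condition \eqref{eq_10_3} together with $w_2=u_1$ on $\p M$ yields $\p_\nu(w_2-u_1)|_\Gamma = i\langle A_1-A_2,\nu\rangle_g u_1|_\Gamma$, so the $\Gamma$-contribution becomes $i\int_\Gamma\langle A_1-A_2,\nu\rangle_g u_1\overline{u_2}\,dS_g$.

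Second, I would re-express $(L_{A_2,q_2}u_1,u_2)$ using the equation for $u_1$. Since $L_{A_1,q_1}u_1=0$, we have $(L_{A_2,q_2}u_1,u_2) = -((L_{A_1,q_1}-L_{A_2,q_2})u_1,u_2)$, and from \eqref{1.1},
\[
(L_{A_1,q_1}-L_{A_2,q_2})u_1 = id^*(A_1-A_2)u_1 - 2i\langle A_1-A_2,du_1\rangle_g + \bigl(\langle A_1,A_1\rangle_g-\langle A_2,A_2\rangle_g+q_1-q_2\bigr)u_1.
\]
Integrating by parts the $d^*$ term against $\overline{u_2}$ produces a bulk contribution $i\int_M\langle A_1-A_2,d(u_1\overline{u_2})\rangle_g\,dV_g$ and a boundary piece $-i\int_{\p M}\langle A_1-A_2,\nu\rangle_g u_1\overline{u_2}\,dS_g$; expanding $d(u_1\overline{u_2}) = \overline{u_2}du_1 + u_1 d\overline{u_2}$ and combining with the $-2i\langle A_1-A_2,du_1\rangle_g\overline{u_2}$ bulk piece gives exactly the $i\int_M\langle A_1-A_2,u_1 d\overline{u_2}-\overline{u_2}du_1\rangle_g\,dV_g$ appearing on the left of \eqref{eq_10_6_prop}. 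Equating the two expressions for $(L_{A_2,q_2}u_1,u_2)$ and cancelling the $\Gamma$-portion of the $\langle A_1-A_2,\nu\rangle_g$ boundary integrals produces \eqref{eq_10_6_prop}.

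The main obstacle will be justifying the integration by parts of $d^*(A_1-A_2)$ against $u_1\overline{u_2}$ at the given low regularity, since $d^*(A_1-A_2)\in L^n$ and $u_1\overline{u_2}$ is only as regular as H\"older and Sobolev bounds allow. The hard part is handled exactly as in the derivation of the magnetic Green formula \eqref{eq_4_Green_1} earlier in this section: the smooth-potential identity is established first for $A_j\in C^\infty(M,T^*M)$, and the uniform bounds of type \eqref{eq_4_Green_4} together with \eqref{eq_4_trace_A_bound} allow passage to the limit under density of $C^\infty(M,T^*M)$ in $W^{1,n}(M^0,T^*M^0)$. The boundary pairings appearing on the right of \eqref{eq_10_6_prop} are well defined thanks to Proposition \ref{prop_trace_magnetic}, so no further regularity issues arise.
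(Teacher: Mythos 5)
Your proposal is correct and follows essentially the same route as the paper: the magnetic Green formula applied with $L_{A_2,q_2}$ (the paper applies it once to $w_2-u_1$ rather than twice and subtracting, which is equivalent), the rewriting of $L_{A_2,q_2}u_1$ via $L_{A_1,q_1}u_1=0$, integration by parts on the $d^*$ term, and the Neumann matching on $\Gamma$ to cancel the $\langle A_1-A_2,\nu\rangle_g$ contribution there. The one detail worth adding is that splitting the $H^{-\frac{1}{2}}\times H^{\frac{1}{2}}$ pairing $\langle\p_\nu(w_2-u_1),\overline{u_2}\rangle$ into integrals over $\Gamma$ and $\p M\setminus\Gamma$ requires $\p_\nu(w_2-u_1)$ to be an honest function on $\p M$, which the paper obtains from boundary elliptic regularity: $w_2-u_1\in H^1_0(M^0)$ with $\Delta_g(w_2-u_1)\in L^2(M)$ gives $w_2-u_1\in H^2(M^0)$, hence $\p_\nu(w_2-u_1)\in H^{\frac{1}{2}}(\p M)$.
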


\begin{proof}
Let $u_1,u_2\in H^1(M^0)$ be solutions to \eqref{eq_10_1}.  As $C_{A_1,q_1}^\Gamma=C_{A_2,q_2}^\Gamma$, there is $w_2\in H^1(M^0)$ solving \eqref{eq_10_2} and satisfying \eqref{eq_10_3}. Using \eqref{eq_10_1}, \eqref{eq_10_2}, and \eqref{1.1}, we obtain that 
\begin{equation}
\label{eq_10_4}
\begin{aligned}
&L_{A_2,q_2}(w_2-u_1)\\
&= id^*((A_1-A_2)u_1)-i \langle A_1-A_2, du_1\rangle_g+ (\langle A_1, A_1\rangle_g -\langle A_2, A_2\rangle_g+q_1-q_2)u_1 \quad \text{in} \quad \mathcal{D}'(M^0). 
\end{aligned}
\end{equation}
Now it follows from \eqref{eq_10_1}, \eqref{eq_10_2}, and \eqref{eq_10_3} that $w_2-u_1\in H_0^1(M^0)$, $\Delta_g(w_2-u_1)\in L^2(M)$, and therefore, by the boundary elliptic regularity, $w_2-u_1\in H^2(M^0)$. Hence, $\p_\nu(w_2-u_1)|_{\p M}\in H^{\frac{1}{2}}(\p M)$. 

Multiplying \eqref{eq_10_4} by $\overline{u_2}$, using the magnetic Green formula \eqref{eq_4_Green_1}, \eqref{eq_10_1} and \eqref{eq_10_3}, we get 
\begin{equation}
\label{eq_10_5}
\begin{aligned}
\int_M (id^*((A_1-A_2)u_1)-i \langle A_1-A_2, du_1\rangle_g) \overline{u_2}dV_g+ \int_M (\langle A_1, A_1\rangle_g -\langle A_2, A_2\rangle_g+q_1-q_2)u_1\overline{u_2}dV_g\\
=-\int_{\p M}\p_\nu (w_2-u_1)\overline{u_2}dS_g.
\end{aligned}
\end{equation}
Using that 
\[
\int_M d^*((A_1-A_2)u_1)\overline{u_2}dV_g=\int_M \langle  (A_1-A_2) u_1,d\overline{u_2}\rangle_g dV_g-  
\int_{\p M} \langle  A_1-A_2,\nu\rangle_g u_1\overline{u_2}dS_g,
\]
we obtain from \eqref{eq_10_5} that 
\begin{equation}
\label{eq_10_6}
\begin{aligned}
\int_M i \langle A_1-A_2, u_1d\overline{u_2}-\overline{u_2}du_1\rangle_g dV_g+ \int_M (\langle A_1,A_1\rangle_g -\langle A_2, A_2\rangle_g+q_1-q_2)u_1\overline{u_2}dV_g\\
=-\int_{\p M}\p_\nu (w_2-u_1)\overline{u_2}dS_g+ i \int_{\p M} \langle  A_1-A_2,\nu\rangle_g u_1\overline{u_2}dS_g. 
\end{aligned}
\end{equation}
Now it follows from \eqref{eq_10_3} that 
\begin{equation}
\label{eq_10_6_2}
\p_\nu (w_2-u_1)|_{\Gamma}=i  \langle  A_1-A_2,\nu\rangle_gu_1|_{\Gamma}.
\end{equation} 
This together with \eqref{eq_10_6} shows \eqref{eq_10_6_prop}. 
\end{proof}

Let us now rewrite the integral identity of Proposition \ref{prop_integral_identity} in the following form, 
\begin{equation}
\label{eq_10_6_prop_rewritten}
\begin{aligned}
\int_M i \langle A_1-A_2, u_1d u_2- u_2du_1\rangle_g dV_g+ \int_M (\langle A_1, A_1\rangle_g -\langle A_2,A_2\rangle_g+q_1-q_2)u_1 u_2dV_g\\
=-\int_{\p M\setminus \Gamma}\p_\nu (w_2-u_1)u_2dS_g+ i \int_{\p M\setminus\Gamma} \langle  A_1-A_2,\nu\rangle_g u_1u_2dS_g,
\end{aligned}
\end{equation}
for $u_1,u_2\in H^1(M^0)$ satisfying 
\begin{equation}
\label{eq_10_1_rewritten}
L_{A_1,q_1} u_1=0, \quad L_{-A_2, q_2}u_2=0, \quad \text{in}\quad \mathcal{D}'(M^0),
\end{equation}
and $w_2\in H^1(M^0)$ satisfying \eqref{eq_10_2} and \eqref{eq_10_3}.

Next we shall test the integral identity \eqref{eq_10_6_prop_rewritten} against CGO solutions to equations \eqref{eq_10_1_rewritten}. By Proposition \ref{prop3.1}, for all  $h>0$ small enough, there are solutions $u_1, u_2 \in H^1(M^0)$ to \eqref{eq_10_1_rewritten}
 of the form 
 \begin{equation}
 \label{eq_10_7}
 u_1=e^\frac{\rho}{h}(a_1+r_1), \quad u_2=e^\frac{-\rho}{h}(a_2+r_2), 
 \end{equation}
  where $\rho = x_1 + i r$,  
 \begin{equation}
 \label{eq_10_8}
  a_1 = |g|^{-\frac{1}{4}}c^\frac{1}{2}e^{i\Phi^{(1)}_h}, \quad  a_2 = |g|^{-\frac{1}{4}}c^\frac{1}{2}e^{i\Phi^{(2)}_h}a_0(x_1,r)b(\theta), 
  \end{equation}
  $a_0$ is a non-vanishing holomorphic function so that $(\p_{x_1}+i\p_r)a_0 =0$,  $b(\theta)$ is smooth. Here $\Phi^{(1)}_h,\Phi^{(2)}_h \in C^\infty(M)$ are such that 
\begin{equation}
 \label{eq_10_9}
 \begin{aligned}
      \|a_j\|_{L^\infty(M)} = \mathcal{O}(1), \hspace{5mm} \|\nabla_g a_j\|_{L^\infty(M)} = \mathcal{O}(h^{-1/2}), \hspace{5mm} \|\Delta_g a_j\|_{L^\infty(M)} = \mathcal{O}(h^{-1}), \\
 \|a_j\|_{L^2(M)}=\mathcal{O}(1), \quad   \|\nabla_g a_j\|_{L^2(M)}=\mathcal{O}(1),\quad  \|\Delta_g a_j\|_{L^2} = o(h^{-1/2}).
  \end{aligned}
\end{equation}
as $h\to 0$, $j=1,2$, and 
\begin{equation}
 \label{eq_10_10}
 \|\Phi^{(j)}_h - \Phi^{(j)}\|_{L^n(M)} = o(h^{1/2}),
\end{equation}
as $h\to 0$, 
where 
\begin{equation}
 \label{eq_10_10_1}
 \Phi^{(1)}(x_1,r,\theta) = -\frac{1}{2} \frac{1}{\pi(x_1+ir)}* ((A_1)_{x_1}+i(A_1)_r), \quad 
 \Phi^{(2)}(x_1,r,\theta) = \frac{1}{2} \frac{1}{\pi(x_1+ir)}* ((A_2)_{x_1}+i(A_2)_r)  
 \end{equation}
 with $A_j=(A_j)_{x_1}dx_1+(A_j)_rdr+(A_j)_\theta d\theta$, $j=1,2$. Furthermore, the remainders $r_j$ satisfy 
\begin{equation}
 \label{eq_10_11}
 \|r_j\|_{H^1_{scl}(M^0)} = o(h^{1/2}),
\end{equation} 
as $h\to 0$, $j=1,2$. 

Let us set 
\[
J_h:=\int_{\p M\setminus \Gamma}\p_\nu (w_2-u_1) u_2dS_g,
\]
with $u_1,u_2$ being CGO solutions given by \eqref{eq_10_7}. The first step in the recovery of the magnetic field is the following result. 
\begin{prop}
\label{prop_10_boundary_terms}
We have
\begin{equation}
 \label{eq_10_12}
h |J_h|\to 0,
\end{equation} 
as $h\to 0$. 
\end{prop}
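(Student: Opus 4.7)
The plan is to apply the boundary Carleman estimate of Corollary~\ref{cor_prop2.2} with weight $\varphi = -x_1$ to $v := w_2 - u_1$, combined with a Cauchy--Schwarz split of $J_h$ that separates $\partial_\nu v$ from the exponentially decaying factor of $u_2$. The weight $-x_1$ is chosen precisely so that $\partial M_-^{-x_1} = \{\partial_\nu x_1 \ge 0\} \supset \partial M \setminus \Gamma$ (on which the Carleman estimate yields control), while $\partial M_+^{-x_1} = F \subset \Gamma$ (where the boundary trace of $\partial_\nu v$ is known from \eqref{eq_10_6_2}).

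First I would record the setup: by \eqref{eq_10_3}, $v\in H^1_0(M^0)$, and $\Delta_g v \in L^2(M)$ via \eqref{1.1} together with $A_j\in L^\infty$, $d^*A_j, q_j\in L^n$, so elliptic boundary regularity gives $v\in H^2(M^0)$. Since $\Gamma$ is an open neighborhood of the closed set $F = \{\partial_\nu x_1\le 0\}$ and $\partial M\setminus\Gamma$ is compact, there exists $c>0$ with $\partial_\nu x_1 \ge c$ on $\partial M\setminus \Gamma$. From $u_2 = e^{-\rho/h}(a_2+r_2)$ we have $|u_2|\le e^{-x_1/h}(|a_2|+|r_2|)$, and Cauchy--Schwarz yields
\[
|J_h|^2 \le \frac{1}{c}\bigg(\int_{\partial M\setminus\Gamma}\partial_\nu x_1\, e^{-2x_1/h}|\partial_\nu v|^2 dS_g\bigg)\bigg(\int_{\partial M\setminus\Gamma}(|a_2|+|r_2|)^2 dS_g\bigg).
\]
The second factor is bounded by $\|a_2\|^2_{L^2(\partial M)}+\|r_2\|^2_{L^2(\partial M)} = \mathcal{O}(1) + o(h^{-1})$ using $\|a_2\|_{L^\infty}=\mathcal{O}(1)$ and the Sobolev trace estimate with $\|r_2\|_{H^1_{scl}} = o(h^{1/2})$.

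For the first factor, I apply Corollary~\ref{cor_prop2.2} with $\varphi=-x_1$ to $L_{A_2,q_2}v$, obtaining
\[
h\int_{\partial M\setminus\Gamma}\partial_\nu x_1 e^{-2x_1/h}|\partial_\nu v|^2 dS_g \le Ch^{-2}\|e^{-x_1/h}(h^2 L_{A_2,q_2}v)\|_{L^2}^2 + Ch\int_{F}(-\partial_\nu x_1)e^{-2x_1/h}|\partial_\nu v|^2 dS_g.
\]
The source is controlled via \eqref{eq_10_4}: after expanding $d^*((A_1-A_2)u_1)=(d^*(A_1-A_2))u_1-\langle A_1-A_2,du_1\rangle_g$, the dominant term after conjugation is $h\langle A_1-A_2,d\rho\rangle_g(a_1+r_1)$, which has $L^2$-norm $\mathcal{O}(h)$ using $A_j\in L^\infty$ and the CGO bounds of Proposition~\ref{prop3.1}. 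The remaining terms are lower order: those involving $d^*(A_1-A_2)$ and $q_1-q_2$ are handled by H\"older with the Sobolev embedding $H^1\hookrightarrow L^{2n/(n-2)}$, giving $o(h^{3/2})$. Thus $h^{-2}\|e^{-x_1/h}(h^2 L_{A_2,q_2}v)\|_{L^2}^2 = \mathcal{O}(1)$. The boundary term on $F\subset\Gamma$ is handled via \eqref{eq_10_6_2}: $\partial_\nu v|_\Gamma = i\langle A_1-A_2,\nu\rangle_g u_1|_\Gamma$, so $|e^{-x_1/h}\partial_\nu v|^2\le C|a_1+r_1|^2$ on $F$, giving $h\cdot o(h^{-1}) = o(1)$.

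Combining, the first factor is $\mathcal{O}(h^{-1})$, so $|J_h|^2 \le \mathcal{O}(h^{-1})\cdot o(h^{-1}) = o(h^{-2})$, whence $h|J_h| = o(1)$. The main obstacle is the bookkeeping of $h$-powers in the source term of the Carleman estimate: the worst contribution from $h^2\langle A_1-A_2,du_1\rangle_g$ produces exactly an $\mathcal{O}(h)$ piece that, multiplied by the Carleman factor $h^{-2}$, yields only an $\mathcal{O}(1)$ bound. This matches, but does not beat, the required scaling, and crucially relies on the $L^\infty$ (not merely $L^n$) hypothesis on the magnetic potentials.
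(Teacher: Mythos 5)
Your argument is correct and follows essentially the same route as the paper: the same boundary Carleman estimate of Corollary~\ref{cor_prop2.2} with the flipped weight $-x_1$ applied to $v=w_2-u_1$, the same Cauchy--Schwarz splitting of $J_h$ using the positivity of $\partial_\nu x_1$ on $\partial M\setminus\Gamma$, the same identification of $h\langle A_1-A_2,d\rho\rangle_g(a_1+r_1)$ as the dominant source term, and the same use of \eqref{eq_10_6_2} on $F$. The one spot where you are less careful than the paper is the pointwise bound $|\langle A_1-A_2,\nu\rangle_g|\le C$ on $F$: the paper only records $L^p(\partial M)$ bounds for $p<\infty$ on this trace (Proposition~\ref{prop_trace_magnetic_2}) and instead runs a H\"older argument pairing $\|\langle A_1-A_2,\nu\rangle_g\|_{L^{2(n-1)}(\partial M)}$ against $\|r_1\|_{L^{2(n-1)/(n-2)}(\partial M)}$, so your $L^\infty$ trace claim, while justifiable from $A_j\in L^\infty(M^0)$ via mollification, deserves an explicit word.
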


\begin{proof}
To prove this result,  we shall rely on boundary Carleman estimates of Corollary \ref{cor_prop2.2}. To that end, first recalling that $\Gamma$ is an open neighborhood of $F$, given by \eqref{eq_int_p_M-}, and letting $\varphi(x)=x_1$,  we see that there is $\varepsilon>0$ such that 
\[
F\subset \p M^\varphi_{-,\varepsilon}:=\{x\in \p M: \p_\nu \varphi(x)\le \varepsilon\}\subset \Gamma. 
\]
We get 
\begin{equation}
 \label{eq_10_13}
\begin{aligned}
|J_h|&\le \int_{\p M\setminus \Gamma}|\p_\nu (w_2-u_1) u_2|dS_g\le  \int_{\p M\setminus \p M^\varphi_{-,\varepsilon}}\frac{\sqrt{\varepsilon}}{\sqrt{\varepsilon}}|\p_\nu (w_2-u_1) u_2|dS_g\\
&\le \frac{1}{\sqrt{\varepsilon}} \int_{ \p M^\varphi_+}|\sqrt{\p_\nu \varphi} \p_\nu (w_2-u_1) u_2|dS_g,
\end{aligned}
\end{equation}
where $\p M^\varphi_+=\{x\in \p M: \p_\nu \varphi(x)\ge 0\}$.   Substituting $u_2$,  given by \eqref{eq_10_7}, in 
\eqref{eq_10_13}, and using the Cauchy--Schwarz inequality,  we obtain that 
\begin{equation}
 \label{eq_10_14}
\begin{aligned}
|J_h|&\le \frac{1}{\sqrt{\varepsilon}} \int_{ \p M^\varphi_+}|\sqrt{\p_\nu \varphi} \p_\nu (w_2-u_1)e^{-\frac{\varphi}{h}}(a_2+r_2)|dS_g\\
&\le \mathcal{O}(1)\|\sqrt{\p_\nu\varphi}e^{-\frac{\varphi}{h}}\p_\nu (w_2-u_1)\|_{L^2(\p M^\varphi_+)}\|a_2+r_2\|_{L^2(\p M)}.
\end{aligned}
\end{equation}
We shall proceed to bound the right hand side of  \eqref{eq_10_14}. In doing so we need boundary Carleman estimates of Corollary \ref{cor_prop2.2} with $\varphi(x)=-x_1$,   which read as follows, for $v\in H^2(M^0)\cap H_0^1(M^0)$, 
\begin{equation}
 \label{eq_10_15}
 \|\sqrt{\p_\nu\varphi}e^{-\frac{\varphi}{h}}\p_\nu v\|_{L^2(\partial M^\varphi_+)}\le \mathcal{O}(\sqrt{h})\|e^{-\frac{\varphi}{h}} L_{A_2,q_2} v\|_{L^2(M)}+ \mathcal{O}(1)\|\sqrt{-\p_\nu\varphi}e^{-\frac{\varphi}{h}}\p_\nu v\|_{L^2(\p M_-^\varphi)}.
\end{equation}
  Using \eqref{eq_10_15} and \eqref{eq_10_2}, we get from \eqref{eq_10_14} that 
\begin{equation}
 \label{eq_10_16}
\begin{aligned}
|J_h|\le  \big(\mathcal{O}(\sqrt{h})\|e^{-\frac{\varphi}{h}} L_{A_2,q_2} (w_2-u_1)\|_{L^2(M)}+\mathcal{O}(1)\|\sqrt{-\p_\nu\varphi}e^{-\frac{\varphi}{h}}\p_\nu (w_2-u_1)\|_{L^2(\partial M_-^\varphi)}\big)\\
\|a_2+r_2\|_{L^2(\p M)}\\
\le \big(\mathcal{O}(\sqrt{h})\|e^{-\frac{\varphi}{h}} L_{A_2,q_2} u_1\|_{L^2(M)}+\mathcal{O}(1)\|\sqrt{-\p_\nu\varphi}e^{-\frac{\varphi}{h}} \langle A_1-A_2,\nu\rangle_g u_1\|_{L^2(\partial M_-^\varphi)}\big)\\
\|a_2+r_2\|_{L^2(\p M)}.
\end{aligned}
\end{equation}
We used $F=\partial M_-^\varphi$ and \eqref{eq_10_6_2} in the last inequality. 

We shall first proceed to bound the second term in the last inequality in \eqref{eq_10_16}. To that end, we need the following general estimate, 
\begin{equation}
 \label{eq_10_17}
\|v\|_{H^{1/2}_{scl}(\p M)}\le \mathcal{O}(h^{-1/2})\|v\|_{H^1_{scl}(M^0)}, \quad v\in H^1(M^0),
\end{equation}
see \cite{Sjostrand_2014}. The estimates \eqref{eq_10_11} and \eqref{eq_10_17} imply that 
\begin{equation}
\label{eq_10_18}
\|r_j\|_{H^{1/2}_{scl}(\p M)}=o(1), 
\end{equation}
and therefore, 
\begin{equation}
\label{eq_10_18_1} 
\|r_j\|_{L^2(\p M)}=o(1), 
\end{equation}
as $h\to 0$, $j=1,2$. Furthermore, by the semiclassical version of Sobolev's embedding, see \cite{BurqDosSantosKrup_2018}*{Lemma 2.5},  and \eqref{eq_10_18}, we get 
\begin{equation}
\label{eq_10_19}
\|r_j\|_{L^{\frac{2(n-1)}{n-2}}(\p M)}\le \mathcal{O}(h^{-1/2}) \|r_j\|_{H^{1/2}_{scl}(\p M)}=o(h^{-1/2}), 
\end{equation}
as $h\to 0$, $j=1,2$.

Now using \eqref{eq_10_7}, \eqref{eq_10_9}, \eqref{eq_10_19},  H\"older's inequality, and Proposition \ref{prop_trace_magnetic_2},  we obtain that 
\begin{equation}
 \label{eq_10_20}
\begin{aligned}
&\|\sqrt{-\p_\nu\varphi}e^{-\frac{\varphi}{h}} \langle A_1-A_2,\nu\rangle_g u_1\|_{L^2(\partial M_-^\varphi)}\le \mathcal{O}(1)\|\langle A_1-A_2,\nu\rangle_g(a_1+r_1)\|_{L^2(\p M)}\\
&\le \mathcal{O}(1)\big(\|\langle A_1-A_2,\nu\rangle_g\|_{L^2(\p M)}\|a_1\|_{L^\infty(M)}+ \|\langle A_1-A_2,\nu\rangle_g\|_{L^{2(n-1)}(\p M)}\|r_1\|_{L^{\frac{2(n-1)}{n-2}}(\p M)}\big)=o(h^{-1/2}),
\end{aligned}
\end{equation}
as $h\to 0$. 

Let us now bound the first term in the last inequality in \eqref{eq_10_16}.  To that end, using the semiclassical version of Sobolev's embedding, see \cite{BurqDosSantosKrup_2018}*{Lemma 2.5}, and \eqref{eq_10_11}
\begin{equation}
 \label{eq_10_21_rem}
\|r_j\|_{L^{\frac{2n}{n-2}}(M)}\le \mathcal{O}(h^{-1})\|r_j\|_{H^1_{scl}(M^0)}=o(h^{-1/2}),
\end{equation}
as $h\to 0$, $j=1,2$. Now in view of  \eqref{eq_10_1_rewritten} and \eqref{1.1}, we write
\begin{equation}
 \label{eq_10_21}
-e^{-\frac{\varphi}{h}} L_{A_2,q_2} u_1=e^{-\frac{\varphi}{h}} \bigg[ i d^*(A_1-A_2)u_1-2i\langle A_1-A_2, du_1\rangle_g +\big(\langle A_1,A_1\rangle_g -\langle A_2,A_2\rangle_g+q_1-q_2\big)u_1\bigg].
\end{equation}
Using \eqref{eq_10_7}, H\"older's inequality, \eqref{eq_10_9},  \eqref{eq_10_21_rem},  we get
\begin{equation}
 \label{eq_10_22}
\begin{aligned}
 &\mathcal{O}(\sqrt{h})\|e^{-\frac{\varphi}{h}} (i d^*(A_1-A_2)+ q_1-q_2)u_1\|_{L^2(M)}\\
& \le \mathcal{O}(\sqrt{h})\|i d^*(A_1-A_2)+ q_1-q_2\|_{L^n(M)}\|a_1+r_1\|_{L^{\frac{2n}{n-2}}(M)}=o(1),
\end{aligned}
\end{equation}
as $h\to 0$. Using \eqref{eq_10_9}, \eqref{eq_10_11}, we obtain that 
\begin{equation}
 \label{eq_10_23}
\begin{aligned}
 \mathcal{O}(\sqrt{h})\|e^{-\frac{\varphi}{h}}\langle A_1-A_2, du_1\rangle_g \|_{L^2(M)}\le  \mathcal{O}(h^{-1/2})\|\langle A_1-A_2, d\rho \rangle_g  \|_{L^\infty(M)}\|a_1+r_1\|_{L^2(M)}\\
 +\mathcal{O}(\sqrt{h})\|A_1-A_2\|_{L^\infty(M)}\|da_1+dr_1\|_{L^2(M)}=\mathcal{O}(h^{-1/2}),
\end{aligned}
\end{equation}
as $h\to 0$. We also have
\begin{equation}
 \label{eq_10_24}
\begin{aligned}
& \mathcal{O}(\sqrt{h})\|e^{-\frac{\varphi}{h}}(\langle A_1,A_1\rangle_g -\langle A_2,A_2\rangle_g)u_1\|_{L^2(M)}\\
& \le 
 \mathcal{O}(\sqrt{h})\|\langle A_1,A_1\rangle_g -\langle A_2,A_2\rangle_g\|_{L^\infty(M)}\|a_1+r_1\|_{L^2(M)}=\mathcal{O}(\sqrt{h}),
\end{aligned}
\end{equation}
as $h\to 0$. Combining the estimates \eqref{eq_10_22}, \eqref{eq_10_23}, \eqref{eq_10_24}, in view of \eqref{eq_10_21},   
we see that 
\begin{equation}
 \label{eq_10_25}
\begin{aligned}
 &\mathcal{O}(\sqrt{h})\|e^{-\frac{\varphi}{h}} L_{A_2,q_2} u_1 \|_{L^2(M)}=\mathcal{O}(h^{-1/2}),
\end{aligned}
\end{equation}
as $h\to 0$. Now by \eqref{eq_10_18_1} and \eqref{eq_10_9}, we also have 
\begin{equation}
 \label{eq_10_26}
\|a_2+r_2\|_{L^2(\p M)}\le \mathcal{O}(1)\|a_2\|_{L^\infty(\p M)}+\|r_2\|_{L^2(\p M)}\le \mathcal{O}(1), 
\end{equation}
as $h\to 0$. 

Using \eqref{eq_10_20}, \eqref{eq_10_25}, and \eqref{eq_10_26}, we obtain from \eqref{eq_10_16} that 
\[
|J_h|=\mathcal{O}(h^{-1/2}),
\]
as $h\to 0$. This shows \eqref{eq_10_12}. 
\end{proof}

Our next step in recovering the magnetic field is the following result. 
\begin{prop}
\label{prop_10_boundary_terms_magnet}
Let $u_1,u_2$ be CGO solutions given by \eqref{eq_10_7}. The we have
\begin{equation}
 \label{eq_10_27}
h\bigg|\int_{\p M\setminus\Gamma} \langle  A_1-A_2,\nu\rangle_g u_1 u_2dS_g\bigg|\to 0,
\end{equation}
as $h\to 0$. 
\end{prop}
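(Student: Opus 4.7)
The key observation is that in the product $u_1u_2$ the two complex phases cancel exactly: since $u_1 = e^{\rho/h}(a_1+r_1)$ and $u_2 = e^{-\rho/h}(a_2+r_2)$ with the same $\rho = x_1+ir$, one has $u_1u_2 = (a_1+r_1)(a_2+r_2)$ without any remaining exponential weight. Consequently, unlike in Proposition~\ref{prop_10_boundary_terms}, no boundary Carleman estimate is required; it is enough to show that the boundary integral in \eqref{eq_10_27} is $o(h^{-1})$ as $h\to 0$, and for this one only needs H\"older on $\partial M$ together with the ingredients already assembled in Section~\ref{sec_CGO} and at the start of this section.

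First I would expand $(a_1+r_1)(a_2+r_2) = a_1a_2 + a_1r_2 + r_1a_2 + r_1r_2$ and estimate the corresponding four boundary integrals in $L^1(\partial M\setminus\Gamma)$. The main inputs are: the uniform bound $\|a_j\|_{L^\infty(M)}=\mathcal{O}(1)$ from \eqref{eq_10_9}; the boundary estimates $\|r_j\|_{L^2(\partial M)}=o(1)$ from \eqref{eq_10_18_1} and $\|r_j\|_{L^{2(n-1)/(n-2)}(\partial M)}=o(h^{-1/2})$ from \eqref{eq_10_19}; and the boundary integrability $\langle A_1-A_2,\nu\rangle_g|_{\partial M}\in L^p(\partial M)$ for every $1\le p<\infty$, supplied by Proposition~\ref{prop_trace_magnetic_2}.

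H\"older then yields: the $a_1a_2$ contribution is $\mathcal{O}(1)$ (using $\langle A_1-A_2,\nu\rangle_g\in L^1(\partial M)$); each of $a_1r_2$ and $r_1a_2$ is $o(1)$ (pairing $\|r_j\|_{L^2(\partial M)}=o(1)$ against $\langle A_1-A_2,\nu\rangle_g\in L^2(\partial M)$); and the worst piece $r_1r_2$ is bounded by
\[
\|r_1\|_{L^{2(n-1)/(n-2)}(\partial M)}\,\|r_2\|_{L^{2(n-1)/(n-2)}(\partial M)}\,\|\langle A_1-A_2,\nu\rangle_g\|_{L^{n-1}(\partial M)} = o(h^{-1}),
\]
since $2\cdot\frac{n-2}{2(n-1)} + \frac{1}{n-1}=1$. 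Summing the four bounds and multiplying by $h$ gives $\mathcal{O}(h)+o(h)+o(1)=o(1)$, which is \eqref{eq_10_27}.

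There is no substantial obstacle in this argument; the only bookkeeping point is the choice of H\"older exponents for the $r_1r_2$ piece, and the whole proof rests on the exact cancellation of CGO phases in $u_1u_2$, which is the structural feature distinguishing this boundary term from the one controlled in Proposition~\ref{prop_10_boundary_terms}.
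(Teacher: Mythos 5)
Your proposal is correct and follows essentially the same route as the paper: both rest on the exact cancellation of the phases in $u_1u_2$, the H\"older triple $L^{n-1}(\p M)\times L^{\frac{2(n-1)}{n-2}}(\p M)\times L^{\frac{2(n-1)}{n-2}}(\p M)$, the bounds \eqref{eq_10_9} and \eqref{eq_10_19}, and Proposition \ref{prop_trace_magnetic_2}. The only difference is bookkeeping: the paper applies H\"older once to $\|a_j+r_j\|_{L^{\frac{2(n-1)}{n-2}}(\p M)}=o(h^{-1/2})$ rather than expanding into four terms, and arrives at the same $o(h^{-1})$ bound.
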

\begin{proof}
By H\"older's inequality, as in \eqref{eq_4_trace_A_bound}, using \eqref{eq_10_19}, and \eqref{eq_10_9},  we get 
\begin{align*}
\bigg|\int_{\p M} \langle A_1-A_2,\nu\rangle_g u_1 u_2dS_g\bigg|\le \|\langle A_1-A_2,\nu\rangle_g\|_{L^{n-1}(\p M)}\|a_1+r_1\|_{L^{\frac{2(n-1)}{n-2}}(\p M)}\|a_2+r_2\|_{L^{\frac{2(n-1)}{n-2}}(\p M)}\\
\le \mathcal{O}(1)\big(\|a_1\|_{L^\infty(M)}+\|r_1\|_{L^{\frac{2(n-1)}{n-2}}(\p M)}\big)
\big(\|a_2\|_{L^\infty(M)}+\|r_2\|_{L^{\frac{2(n-1)}{n-2}}(\p M)}\big)=o(h^{-1}), 
\end{align*}
as $h\to 0$. This shows \eqref{eq_10_27}. 
\end{proof}

Using Propositions \ref{prop_10_boundary_terms},  \ref{prop_10_boundary_terms_magnet}, we obtain from \eqref{eq_10_6_prop_rewritten} that for $u_1,u_2$ being CGO solutions given by \eqref{eq_10_7}, 
\begin{equation}
 \label{eq_10_28}
\begin{aligned}
h\int_M i \langle A_1-A_2, u_1d u_2- u_2du_1\rangle_g dV_g+ h\int_M (\langle A_1,A_1\rangle_g -\langle A_2,A_2\rangle_g+q_1-q_2)u_1 u_2 dV_g\\
=o(1),
\end{aligned}
\end{equation}
as $h\to 0$. 

The next step in recovering the magnetic field is the following result.
\begin{prop}
\label{prop-idenity_A1-A2}
The equality \eqref{eq_10_27} implies that 
\begin{equation}
 \label{eq_10_29}
 \int_M \langle A_1-A_2, d\rho \rangle_g  |g|^{-1/2}c e^{i\Phi} a_0 bdV_g=0,
\end{equation}
where $\Phi=\Phi^{(1)}+\Phi^{(2)}\in L^\infty(M)$ with  $\Phi^{(j)}$  given by \eqref{eq_10_10_1}.
\end{prop}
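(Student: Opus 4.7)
The plan is to substitute the CGO solutions \eqref{eq_10_7} into the reduced integral identity \eqref{eq_10_28} and pass to the limit $h\to 0$. A direct computation gives
\[
u_1 du_2 - u_2 du_1 = -\frac{2}{h}(a_1+r_1)(a_2+r_2)\,d\rho + \bigl[(a_1+r_1)\,d(a_2+r_2) - (a_2+r_2)\,d(a_1+r_1)\bigr],
\]
so that $h\cdot i\langle A_1-A_2, u_1 du_2 - u_2 du_1\rangle_g$ decomposes into a principal contribution $-2i\langle A_1-A_2, d\rho\rangle_g (a_1+r_1)(a_2+r_2)$ plus remainders carrying an explicit prefactor $h$. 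The goal is to show the remainders, together with the second integral in \eqref{eq_10_28}, vanish as $h\to 0$, and then identify the limit of the principal term with the right-hand side of \eqref{eq_10_29}.

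First I would dispose of the $h$-prefactor remainders. Since $A_1-A_2\in L^\infty$, using the bounds $\|a_j\|_{L^2},\|\nabla_g a_j\|_{L^2}=\mathcal{O}(1)$, $\|r_j\|_{L^2}=o(h^{1/2})$, and the semiclassical estimate $\|\nabla_g r_j\|_{L^2}\le h^{-1}\|r_j\|_{H^1_{scl}(M^0)} = o(h^{-1/2})$ inherited from \eqref{eq_10_11}, each of the terms like $h\int\langle A_1-A_2, (a_1+r_1)d(a_2+r_2)\rangle_g\,dV_g$ is bounded by $\mathcal{O}(h)\cdot o(h^{-1/2})=o(h^{1/2})\to 0$ via Hölder's inequality. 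For the second integral in \eqref{eq_10_28}, the $\langle A_j, A_j\rangle_g\in L^\infty$ piece contributes $\mathcal{O}(h)$, while for $q_1-q_2\in L^n$ I would pair through Hölder with the semiclassical Sobolev embedding $\|r_j\|_{L^{2n/(n-2)}(M)}\le\mathcal{O}(h^{-1})\|r_j\|_{H^1_{scl}(M^0)}=o(h^{-1/2})$, yielding a total $o(h^{1/2})$.

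It remains to take the limit of the principal term. The remainder cross-terms $a_j r_k$ and $r_1 r_2$ are handled by $\|A_1-A_2\|_{L^\infty}\|a_j\|_{L^\infty}\|r_k\|_{L^2}=o(h^{1/2})$, reducing the problem to
\[
\int_M \langle A_1-A_2, d\rho\rangle_g\, a_1 a_2\, dV_g = \int_M \langle A_1-A_2, d\rho\rangle_g\, |g|^{-1/2}c\, e^{i(\Phi_h^{(1)}+\Phi_h^{(2)})} a_0 b\, dV_g.
\]
Using the elementary inequality $|e^{is}-e^{it}|\le |s-t|$ together with \eqref{eq_10_10},
\[
\|e^{i(\Phi_h^{(1)}+\Phi_h^{(2)})}-e^{i\Phi}\|_{L^n(M)} \le \|\Phi_h^{(1)}-\Phi^{(1)}\|_{L^n(M)} + \|\Phi_h^{(2)}-\Phi^{(2)}\|_{L^n(M)} = o(h^{1/2}),
\]
and pairing against $\langle A_1-A_2, d\rho\rangle_g\, |g|^{-1/2} c\, a_0 b\in L^{n/(n-1)}(M)$ via Hölder gives the required convergence. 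Dividing the resulting identity by $-2i$ yields \eqref{eq_10_29}.

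The main obstacle is tracking the exact orders in $h$: each of the estimates on $r_j$, on $\Phi_h^{(j)}-\Phi^{(j)}$, and on $\Delta_g a_j$ must improve on the naive $\mathcal{O}$-bound by a genuine factor of $o(1)$, and only this small-$o$ margin — which comes from the mollification estimates in Proposition \ref{prop_A-reg} and propagates through Proposition \ref{prop3.1} — suffices to close the argument against the unavoidable $h^{-1/2}$ losses produced by the semiclassical Sobolev embedding and the Carleman solvability estimate.
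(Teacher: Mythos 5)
Your proposal is correct and follows essentially the same route as the paper: the same decomposition of $u_1\,du_2-u_2\,du_1$, the same $o(h^{1/2})$ bookkeeping for the remainder and cross terms, and the same passage from $\Phi_h^{(j)}$ to $\Phi^{(j)}$ at the end. The only caveat is that $|e^{is}-e^{it}|\le|s-t|$ holds for \emph{real} $s,t$, whereas the phases $\Phi_h^{(j)},\Phi^{(j)}$ are complex-valued here; the paper instead uses $|e^{z}-e^{w}|\le|z-w|e^{\max\{\operatorname{Re}z,\operatorname{Re}w\}}$ together with the uniform $L^\infty$ bounds on $\Phi_h^{(j)}$, which yields the same $o(h^{1/2})$ estimate up to a harmless constant.
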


\begin{proof}
First using H\"older's inequality, and \eqref{eq_10_21_rem}, \eqref{eq_10_11}, \eqref{eq_10_9}, we get 
\begin{equation}
 \label{eq_10_30}
\begin{aligned}
\bigg| h\int_M & (\langle A_1,A_1\rangle_g -\langle A_2,A_2\rangle_g+q_1-q_2)u_1 u_2dV_g\bigg|\\
&\le
\mathcal{O}(h)\|\langle A_1,A_1\rangle_g -\langle A_2,A_2\rangle_g\|_{L^\infty(M)}\|a_1+r_1\|_{L^2(M)}\|a_2+r_2\|_{L^2(M)}\\
&+\mathcal{O}(h)\|q_1-q_2\|_{L^n(M)}\|a_1+r_1\|_{L^\frac{2n}{n-2}(M)}\|a_2+r_2\|_{L^2(M)}\le \mathcal{O}(h)+o(h^{1/2}),
\end{aligned}
\end{equation}
as $h\to 0$. Thus, in view of \eqref{eq_10_30},  it follows from \eqref{eq_10_28} that 
\begin{equation}
 \label{eq_10_31}
h\int_M  \langle A_1-A_2, u_1d u_2- u_2du_1\rangle_g dV_g
=o(1),
\end{equation}
as $h\to 0$. Here $u_1,u_2$ are CGO solutions given by \eqref{eq_10_7}.  Using \eqref{eq_10_7}, we have
\begin{equation}
 \label{eq_10_32}
 u_1d u_2- u_2du_1=-\frac{2d\rho}{h}(a_1a_2+a_1r_2+a_2r_1+r_1r_2)+(a_1+r_1)(da_2+dr_2)-(a_2+r_2)(da_1+dr_1).
\end{equation}
Using \eqref{eq_10_9}, \eqref{eq_10_11},  we get
\begin{equation}
 \label{eq_10_33}
\bigg|h\int_M  \langle A_1-A_2, (a_1+r_1)(da_2+dr_2) \rangle_g dV_g\bigg|\le \mathcal{O}(h)\|A_1-A_2\|_{L^\infty}\|a_1+r_1\|_{L^2}\|da_2+dr_2\|_{L^2}=o(h^{1/2}),
\end{equation}
as $h\to 0$. Similarly, we have 
\begin{equation}
 \label{eq_10_34}
\bigg|h\int_M  \langle A_1-A_2, (a_2+r_2)(da_1+dr_1)\rangle_g dV_g\bigg|=o(h^{1/2}),
\end{equation}
as $h\to 0$. Using \eqref{eq_10_9}, \eqref{eq_10_11},  we also get
\begin{equation}
 \label{eq_10_35}
\begin{aligned}
\bigg|\int_M  &\langle A_1-A_2, d\rho \rangle_g (a_1r_2+a_2r_1+r_1r_2)dV_g\bigg|\\
&\le  \| \langle A_1-A_2, d\rho \rangle_g \|_{L^\infty}(\|a_1\|_{L^2}\|r_2\|_{L^2}+\|a_2\|_{L^2}\|r_1\|_{L^2}+ \|r_1\|_{L^2}\|r_2\|_{L^2})=o(h^{1/2}),
\end{aligned}
\end{equation}
as $h\to 0$.  Thus, it follows from \eqref{eq_10_31}, \eqref{eq_10_32}, \eqref{eq_10_33}, \eqref{eq_10_34}, and \eqref{eq_10_35} that 
\begin{equation}
 \label{eq_10_36}
\int_M  \langle A_1-A_2, d\rho \rangle_g a_1 a_2dV_g=o(1),
\end{equation}
as $h\to 0$.  Using \eqref{eq_10_8}, we obtain from \eqref{eq_10_36} that 
\begin{equation}
 \label{eq_10_37}
\lim_{h\to 0}\int_M  \langle A_1-A_2, d\rho \rangle_g |g|^{-1/2}c e^{i(\Phi_h^{1}+\Phi_h^{(2)})}a_0b dV_g=0. 
\end{equation}
To show that \eqref{eq_10_37} gives the claim \eqref{eq_10_29}, we proceed as in 
\cite{2018}*{Section 4, page 545}. Indeed, using $|e^z-e^w| \leq |z-w| e^{\max\{\text{Re}\,(z),\text{Re}\,(w)\}}$, the fact that  $\Phi^{(j)}_h,\Phi_j \in L^\infty(M)$ and $\|\Phi^{(j)}_h\|_{L^\infty(M)} =\mathcal{O}(1)$ uniformly in $h$, cf. \eqref{3.15}, the embedding $L^n(M)\subset L^2(M)$, and \eqref{eq_10_10}, we get 
\begin{align*}
    \bigg|\int_M \langle A_1-A_2, d\rho\rangle_g |g|^{-1/2}c (&e^{i(\Phi^{(1)}_h+\Phi^{(2)}_h)}-e^{i\Phi})a_0 b dV_g\bigg| \le O(1) \|e^{i(\Phi^{(1)}_h+\Phi^{(2)}_h)}-e^{i\Phi}\|_{L^2(M)}\\
    &\leq O(1) \|\Phi^{(1)}_h+\Phi^{(2)}_h - \Phi_1-\Phi_2\|_{L^2(M)} =o(h^{1/2}), 
\end{align*}
as $h\to 0$. This complete the proof of \eqref{eq_10_37}. 
\end{proof}

Proposition \ref{prop-idenity_A1-A2} gives us exactly the same integral identity for $A_1-A_2$ as that in \cite{2018}*{Section 4, page 545}. Proceeding as that work, using only that $A_1,A_2\in L^\infty(M, T^*M)$, we conclude that $dA_1=dA_2$ in $M$.

\section{Recovering the electric potential}

\label{sec_recovery_electric}

Since $M$ is simply connected, by the Poincar\'e lemma for currents, see \cite{Rham}, we conclude that there is $\phi \in \mathcal{D}'(M)$ such that $d\phi = A_1-A_2\in (W^{1,n}\cap L^\infty) (M^0,T^*M^0)$. It follows from \cite{Hormander}*{Theorem 4.5.11} that $\phi\in W^{1,\infty}(M^0)$. Since $A_1 = A_2$ in $L^2(\p M)$, we have that 
$d(\phi|_{\p M})=0$ in $\mathcal{D}'(\p M)$. By \cite{Hormander}*{Theorem 3.1.4'} and the fact that $\p M$ is connected, 
$\phi$ is constant along $\p M$. Modifying $\phi$ by a constant,  we may assume that $\phi =0$ on $\partial M$. Let us set 
\[
C_{A_j,q_j}=\{(u|_{\p M}, (\p_\nu u+i\langle A, \nu\rangle_g u)|_{\p M }: u\in H^1(M^0) \text{ such that }L_{A_j,q_j}u=0 \text{ in } M^0\},
\]
$j=1,2$,  for the set of full Cauchy data. Recalling  \cite{2018}*{Lemma 4.1}, we get 
\[
C_{A_2,q_2}=C_{A_2+d\phi, q_2}= C_{A_1,q_2},
\]
and therefore, 
\begin{equation}
\label{eq_11_1}
C_{A_1,q_2}^\Gamma= C_{A_2,q_2}^\Gamma=C_{A_1,q_1}^\Gamma. 
\end{equation}
The equality \eqref{eq_11_1} gives us the following integral identity, see \eqref{eq_10_6_prop_rewritten}, 
\begin{equation}
\label{eq_11_2}
\begin{aligned}
\int_M (q_1-q_2)u_1 u_2dV_g
=-\int_{\p M\setminus \Gamma}\p_\nu (w_2-u_1)u_2dS_g,
\end{aligned}
\end{equation}
for $u_1,u_2\in H^1(M^0)$ satisfying 
\begin{equation}
\label{eq_11_3}
L_{A_1,q_1} u_1=0, \quad L_{-A_1, q_2}u_2=0, \quad \text{in}\quad \mathcal{D}'(M^0),
\end{equation}
and $w_2\in H^1(M^0)$ satisfying 
\begin{equation}
\label{eq_11_4}
 L_{A_1,q_2} w_2=0 \quad \text{in}\quad \mathcal{D}'(M^0),
\end{equation}
and 
\begin{equation}
\label{eq_11_5}
w_2|_{\p M}= u_1|_{\p M}, \quad \p_\nu w_2|_{\Gamma}= \p_\nu u_1|_{\Gamma}. 
\end{equation}

Next we shall test the integral identity \eqref{eq_11_2} against CGO solutions to equations \eqref{eq_11_3}. By Proposition \ref{prop3.1}, for all  $h>0$ small enough, there are solutions $u_1, u_2 \in H^1(M^0)$ to \eqref{eq_11_3}
 of the form \eqref{eq_10_7}, \eqref{eq_10_8}, satisfying the bounds \eqref{eq_10_9}, \eqref{eq_10_11}. 
 Let us set 
\[
I_h:=\int_{\p M\setminus \Gamma}\p_\nu (w_2-u_1) u_2dS_g,
\]
with $u_1,u_2$ being CGO solutions given by \eqref{eq_10_7}. The first step in recovering the electric potential is the following result. 
\begin{prop}
\label{prop_11_boundary_terms}
We have
\begin{equation}
 \label{eq_11_6}
|I_h|\to 0,
\end{equation} 
as $h\to 0$. 
\end{prop}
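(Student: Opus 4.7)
The plan is to follow the boundary-Carleman strategy of Proposition~\ref{prop_10_boundary_terms}, which in this setting simplifies considerably because the two equations in \eqref{eq_11_3} involve the \emph{same} magnetic potential $A_1$. The matching in \eqref{eq_11_5} then gives $\p_\nu(w_2-u_1)|_\Gamma=0$ outright, with no $\langle A_1-A_2,\nu\rangle$ correction of the form \eqref{eq_10_6_2}; this is exactly what allows us to get the stronger conclusion $|I_h|\to 0$ rather than only $h|I_h|\to 0$.

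First I would localize away from $F$. Setting $\varphi(x)=x_1$ and using that $F=\p M^\varphi_-$ is a closed subset of the open set $\Gamma$, choose $\varepsilon>0$ with $\{x\in\p M:\p_\nu\varphi(x)\le\varepsilon\}\subset\Gamma$. Arguing as in \eqref{eq_10_13}--\eqref{eq_10_14}, substituting $u_2=e^{-\rho/h}(a_2+r_2)$ with $\rho=\varphi+ir$ so that $|e^{-\varphi/h}u_2|=|a_2+r_2|$, and applying Cauchy--Schwarz, I obtain
\begin{equation*}
|I_h|\le \mathcal{O}(1)\,\|\sqrt{\p_\nu\varphi}\,e^{-\varphi/h}\p_\nu(w_2-u_1)\|_{L^2(\p M^\varphi_+)}\,\|a_2+r_2\|_{L^2(\p M)},
\end{equation*}
where the last factor is $\mathcal{O}(1)$ by \eqref{eq_10_9} and \eqref{eq_10_18_1}.

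Next I would apply the boundary Carleman estimate \eqref{eq_10_15} (with $L_{A_1,q_2}$ in place of $L_{A_2,q_2}$) to $v=w_2-u_1$, which, as in the proof of Proposition~\ref{prop_integral_identity}, belongs to $H^2(M^0)\cap H^1_0(M^0)$ by boundary elliptic regularity. The boundary contribution over $\p M^\varphi_-$ vanishes because $\p M^\varphi_-\subset\Gamma$ and $\p_\nu(w_2-u_1)|_\Gamma=0$ by \eqref{eq_11_5}. Using \eqref{eq_11_3} and \eqref{eq_11_4}, I compute
\begin{equation*}
L_{A_1,q_2}(w_2-u_1)= -L_{A_1,q_2}u_1 = -L_{A_1,q_1}u_1+(q_1-q_2)u_1 = (q_1-q_2)u_1.
\end{equation*}
Since $|e^{-\varphi/h}u_1|=|a_1+r_1|$, H\"older's inequality together with $\|a_1\|_{L^\infty}=\mathcal{O}(1)$ and the semiclassical Sobolev bound \eqref{eq_10_21_rem} yields
\begin{equation*}
\|e^{-\varphi/h}(q_1-q_2)u_1\|_{L^2(M)}\le \|q_1-q_2\|_{L^n(M)}\|a_1+r_1\|_{L^{\frac{2n}{n-2}}(M)}= o(h^{-1/2}),
\end{equation*}
as $h\to 0$. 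Consequently
\begin{equation*}
\|\sqrt{\p_\nu\varphi}\,e^{-\varphi/h}\p_\nu(w_2-u_1)\|_{L^2(\p M^\varphi_+)}\le \mathcal{O}(\sqrt{h})\cdot o(h^{-1/2}) = o(1),
\end{equation*}
and combining with the $\mathcal{O}(1)$ bound on $\|a_2+r_2\|_{L^2(\p M)}$ gives $|I_h|=o(1)$, proving \eqref{eq_11_6}.

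No serious obstacle appears beyond what was already dealt with in Proposition~\ref{prop_10_boundary_terms}: the disappearance of the $\p M^\varphi_-$ boundary term (thanks to the identical magnetic potentials $A_1$ on both sides) is the only structural difference, and it is precisely the reason that the $L^n$-regularity of $q_1-q_2$, combined with the $o(h^{-1/2})$ bound on $r_1$ in $L^{2n/(n-2)}(M)$, is enough to produce the unqualified decay $I_h\to 0$.
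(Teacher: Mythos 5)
Your argument is correct and coincides with the paper's proof: both reduce $|I_h|$ via the boundary Carleman estimate to $\mathcal{O}(\sqrt h)\,\|e^{-\varphi/h}L_{A_1,q_2}u_1\|_{L^2(M)}\|a_2+r_2\|_{L^2(\p M)}$, use that the $\p M_-^\varphi$ boundary term vanishes because $F=\p M_-^\varphi\subset\Gamma$ and $\p_\nu(w_2-u_1)|_\Gamma=0$, and then bound $(q_1-q_2)(a_1+r_1)$ by H\"older with $\|a_1+r_1\|_{L^{2n/(n-2)}(M)}=o(h^{-1/2})$. (Only a cosmetic slip: since $u_2=e^{-\rho/h}(a_2+r_2)$, the correct identity is $|e^{\varphi/h}u_2|=|a_2+r_2|$ rather than $|e^{-\varphi/h}u_2|$, which does not affect the displayed inequality.)
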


\begin{proof}
First proceeding as in the proof of Proposition \ref{prop_10_boundary_terms}, we get 
\begin{equation}
 \label{eq_11_7}
\begin{aligned}
|I_h|\le  \big(\mathcal{O}(\sqrt{h})\|e^{-\frac{\varphi}{h}} L_{A_1,q_2} (w_2-u_1)\|_{L^2(M)}+\mathcal{O}(1)\|\sqrt{-\p_\nu\varphi}e^{-\frac{\varphi}{h}}\p_\nu (w_2-u_1)\|_{L^2(\partial M_-^\varphi)}\big)\\
\|a_2+r_2\|_{L^2(\p M)}\\
\le \mathcal{O}(\sqrt{h})\|e^{-\frac{\varphi}{h}} L_{A_1,q_2} u_1\|_{L^2(M)}
\|a_2+r_2\|_{L^2(\p M)},
\end{aligned}
\end{equation}
cf. \eqref{eq_10_16}. In the last inequality of \eqref{eq_11_7}, we used that $F= \partial M_-^\varphi$ and \eqref{eq_11_4},  \eqref{eq_11_5}.

Now in view of \eqref{1.1}, \eqref{eq_11_3}, and  \eqref{eq_10_7},  we write
\begin{equation}
 \label{eq_11_8}
|e^{-\frac{\varphi}{h}} L_{A_1,q_2} u_1|=|e^{-\frac{\varphi}{h}}(q_1-q_2)u_1|=|(q_1-q_2)(a_1+r_1)|.
\end{equation}
Using H\"older's inequality and \eqref{eq_10_9}, \eqref{eq_10_21_rem}, \eqref{eq_10_26},  we obtain from \eqref{eq_11_7} that 
\begin{equation}
 \label{eq_11_9}
\begin{aligned}
|I_h|\le  \mathcal{O}(\sqrt{h})\| q_1-q_2\|_{L^n(M)}\|a_1+r_1\|_{L^{\frac{2n}{n-2}}(M)}
\|a_2+r_2\|_{L^2(\p M)}=o(1),
\end{aligned}
\end{equation}
as $h\to 0$. This shows \eqref{eq_11_6}. 
\end{proof}
Combining \eqref{eq_11_2} with Proposition \ref{prop_11_boundary_terms}, we get 
\begin{equation}
\label{eq_11_10}
\int_M (q_1-q_2)u_1 u_2dV_g
=o(1),
\end{equation}
as $h\to 0$,  with $u_1,u_2$ being CGO solutions given by \eqref{eq_10_7}. Substituting $u_1,u_2$ given by  \eqref{eq_10_7} into \eqref{eq_11_10}, we obtain that 
\begin{equation}
\label{eq_11_11}
\int_M (q_1-q_2)(a_1a_2+a_1r_2+a_2r_1+r_1r_2)dV_g
=o(1),
\end{equation}
as $h\to 0$. Using H\"older's inequality, and \eqref{eq_10_9}, \eqref{eq_10_11}, \eqref{eq_10_21_rem}, we get
\begin{equation}
\label{eq_11_12}
\begin{aligned}
\bigg|\int_M &(q_1-q_2)(a_1r_2+a_2r_1+r_1r_2)dV_g\bigg|
\le \|q_1-q_2\|_{L^n(M)}\\
&\bigg(\|a_1\|_{L^{\frac{2n}{n-2}}(M)}\|r_2\|_{L^2(M)} + \|a_2\|_{L^{\frac{2n}{n-2}}(M)}\|r_1\|_{L^2(M)}+ \|r_1\|_{L^{\frac{2n}{n-2}}(M)}\|r_2\|_{L^2(M)}\bigg)
=o(1),
\end{aligned}
\end{equation}
as $h\to 0$. Now in view of \eqref{eq_10_8}, we have 
\begin{equation}
\label{eq_11_13}
a_1a_2 = |g|^{-\frac{1}{2}}c e^{i(\Phi^{(1)}_h+\Phi^{(2)}_h )}a_0(x_1,r)b(\theta)= |g|^{-\frac{1}{2}}c a_0(x_1,r)b(\theta).
\end{equation}
Here we used that  $\Phi^{(1)}_h+\Phi^{(2)}_h=0$ which follows from the fact that 
 \begin{align*} 
  &\Phi^{(1)}_h(x_1,r,\theta) = -\frac{1}{2} \frac{1}{\pi(x_1+ir)}* ((A_{1,h})_{x_1}+i(A_{1,h})_r), \\
 &\Phi^{(2)}_h(x_1,r,\theta) = \frac{1}{2} \frac{1}{\pi(x_1+ir)}* ((A_{1,h})_{x_1}+i(A_{1,h})_r),
 \end{align*}
see  \eqref{3.14}. Choosing $a_0(x_1,r)=e^{i\lambda(x_1+ir)}$, $\lambda\in \R$, using \eqref{eq_11_12}, \eqref{eq_11_13}, and $dV_g=|g|^{1/2}dx_1drd\theta$,  we get from \eqref{eq_11_11}  that 
\begin{equation}
\label{eq_11_14}
\int_M (q_1-q_2) c b(\theta) e^{i\lambda(x_1+ir)} dx_1drd\theta=0.
\end{equation}
The integral identity \eqref{eq_11_14} is exactly the same as  that of \cite{unboundedPotential}*{Section 4, page 63}, where the case of potentials of class $L^{n/2}(M)\supset L^n(M)$ is considered. Proceeding as that work, we conclude that $q_1=q_2$ in $M$.  This concludes the proof of Theorem \ref{thm1.1}.

\section{Proof of Theorem \ref{thm1.2}}

\label{sec_Thm_1_2}

The proof is based on a well known reduction, which we recall following 
\cite{Kenig_Salo_2013}*{Section 3 b}, \cite{caseyRodriguez}*{Page 12}, for the convenience of the reader. 

Without loss of generality, we assume $x_0 = 0$ and that $\overline{\Omega} \subseteq \{ x \in \mathbb{R}^n : x_n >0\}$. Then  $\varphi(x) = \log|x|$. Using the notation of Theorem \ref{thm1.1}, let $M = \overline{\Omega}$ and let $g$ be the Euclidean metric on $\overline{\Omega}$. Consider the following change of coordinates
    \[
        y_1 = \log|x|,\ 
        y' = \frac{x}{|x|}. 
    \]
    Here $y'$ parametrizes $\mathbb{S}^{n-1}$. The Euclidean metric $g$ on $\overline{\Omega}$ is given by
    \begin{equation}
        \label{6.1}
        g=c(e \oplus g_0),
    \end{equation}
    where $c(y_1,y') = \exp(2y_1)$, $e$ is the Euclidean metric on $\mathbb{R}$, and $g_0 = g_{\mathbb{S}^{n-1}}$ is the standard metric on $\mathbb{S}^{n-1}$. To see why \eqref{6.1} is true, note that by \cite{grigoryan}*{Section 3.9, Equation 3.58} we have in polar coordinates,
 \[  
   g = dr^2 +r^2 g_0, \quad r=|x|.
 \]
    Since $r = e^{y_1}$ and so $dr = e^{y_1}dy_1$, we get
    \[
        g = (e^{y_1}dy_1)^2 +(e^{y_1})^2g_0= e^{2y_1}(dy_1^2+ g_0)= c(e \oplus g_0).
    \]
So finally we have
 \[  
   (\overline{\Omega},g) = (M,g) \subseteq (\mathbb{R}\times M_0^0, g),
\]
    where $(M_0,g_0) \subseteq (\{\theta \in \mathbb{S}^{n-1}: \theta_n >0\},g_0)$ is a closed cap, and thus a simple manifold.  The Euclidean magnetic Schr\"odinger operator given by  \eqref{1.1_eclid}, expressed in the $y$--coordinates takes the form \eqref{1.1}, with $g$  defined by \eqref{6.1}.  Since $\varphi(y_1,y')=y_1$ and $F=F(0)$ in view of 
 \eqref{eq_int_p_M-}, \eqref{eq_int_p_M-_eclid},  Theorem \ref{thm1.2} follows directly from Theorem \ref{thm1.1}.

\section{Application to inverse problems for  advection-diffusion equations: proof of Theorem \ref{thm1.3}}

\label{sec_Thm_1_3}

Let $X_l\in (W^{1,n}\cap L^\infty) (M^0,TM^0)$ and let $X_l^\flat = g_{jk}X_l^jdx^k$ be the corresponding 1-form, $l=1,2$. 
First a direct computation using \eqref{1.1} shows that 
\[
L_{X_l}=L_{A_l,q_l},
\]
where 
\[
A_l=\frac{iX_l^\flat}{2}\in (W^{1,n}\cap L^\infty) (M^0,T^*M^0), \quad q_l=  \frac{1}{4}\langle X_l, X_l \rangle_g - \frac{1}{2}{\rm{div}_g} (X_l)\in L^{n}(M,\R),
\]
$l=1,2$.  The fact that $\Lambda_{X_1}^\Gamma=\Lambda_{X_2}^\Gamma$ and the boundary reconstruction result of \cite{krupchykManifold}*{Appendix A} implies that $X_1|_{\Gamma}=X_2|_{\Gamma}$. Therefore, $C_{A_1,q_1}^\Gamma=C_{A_2,q_2}^\Gamma$. An application of Theorem \ref{thm1.1} gives that 
\[
dX_1^\flat=dX_2^\flat, \quad q_1=q_2,\quad \text{in}\quad M.
\]
Arguing as in the proof of Theorem \ref{thm1.1}, in particular, since $X_1|_{\p M} = X_2|_{\p M}$, we conclude that there exists $\phi\in W^{1,\infty}(M^0)$ such that 
\begin{equation}
\label{eq_20_1}
X_1-X_2=\nabla_g \phi,
\end{equation}
and 
\begin{equation}
\label{eq_20_2}
\phi|_{\p M}=0.
\end{equation}
 The fact that $q_1=q_2$ together with \eqref{eq_20_1}, \eqref{eq_20_2} implies that $\phi$ solves the following Dirichlet problem
 \[
\begin{cases}
\Delta_g(\phi)-V(\phi)=0& \text{in} \quad M^0,\\
\phi|_{\p M}=0. 
\end{cases}
\]
Here $V=X_2+\frac{1}{2}\langle \nabla_g \phi,\nabla_g \rangle_g\in L^\infty(M,TM)$.  Applying the maximum principle of \cite{Aubin}*{Chapter 3, Section 8.2}, we obtain that $\phi=0$ in $M$, and hence, $X_1=X_2$ in $M$. 

\appendix

\section{Regularization of Sobolev functions}

\label{sec_app}

In this appendix, we collect some useful estimates used in the main part of the paper. 

In doing so, we let $\Psi\in C^\infty_0(\R^n)$ be such that $\int_{\R^n} \Psi(x) dx=1$ and $0\le \Psi\le 1$. Let $\tau>0$ and let 
\[
\Psi_\tau(x)=\tau^{-n}\Psi\bigg(\frac{x}{\tau}\bigg), 
\]
be the usual mollifier. 

\begin{prop}
\label{propA.1}
Let $f\in W^{1,p}(\mathbb{R}^n)$,  $p\in [1,\infty)$, and assume that $\Psi$ is radial. Then $f* \Psi_\tau \in (C^\infty \cap W^{1.p})(\mathbb{R}^n)$ and 
\begin{equation}
\label{eq_app_10_1}
\|f*\Psi_\tau - f\|_{L^p(\mathbb{R}^n)} = o(\tau),
\end{equation}
as $\tau \to 0$. 
\end{prop}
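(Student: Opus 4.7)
The plan is to first dispose of the regularity assertion and then focus on the improved rate $o(\tau)$, which is where the radiality hypothesis enters. The smoothness of $f*\Psi_\tau$ and the bound $\|f*\Psi_\tau\|_{W^{1,p}}\le \|f\|_{W^{1,p}}$ follow from differentiation under the integral sign and Young's inequality, together with the standard identity $\nabla(f*\Psi_\tau)=(\nabla f)*\Psi_\tau$. So the substance of the proof is the $o(\tau)$ estimate.

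For that, I would start from the pointwise identity
\begin{equation*}
f(x-\tau y)-f(x)=-\tau\int_0^1 y\cdot\nabla f(x-s\tau y)\,ds,
\end{equation*}
which holds for $f\in C^\infty\cap W^{1,p}$ and extends to all $f\in W^{1,p}(\mathbb{R}^n)$ as an identity in $L^p(dx)$ for each fixed $y$, by density together with the standard fact that $C^\infty\cap W^{1,p}$ is dense in $W^{1,p}$ for $p\in[1,\infty)$. Integrating against $\Psi(y)\,dy$ and applying Fubini's theorem gives
\begin{equation*}
(f*\Psi_\tau)(x)-f(x)=-\tau\int_0^1\!\!\int_{\mathbb{R}^n}\Psi(y)\,y\cdot\nabla f(x-s\tau y)\,dy\,ds.
\end{equation*}

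The crucial step is to exploit the radiality of $\Psi$, which yields $\int_{\mathbb{R}^n}\Psi(y)\,y\,dy=0$. Subtracting this vanishing quantity times $\nabla f(x)$ inside the inner integral, I rewrite it as
\begin{equation*}
h_{s,\tau}(x):=\int_{\mathbb{R}^n}\Psi(y)\,y\cdot\bigl[\nabla f(x-s\tau y)-\nabla f(x)\bigr]\,dy,
\end{equation*}
so that $\|f*\Psi_\tau-f\|_{L^p}\le \tau\int_0^1 \|h_{s,\tau}\|_{L^p}\,ds$. By Minkowski's integral inequality,
\begin{equation*}
\|h_{s,\tau}\|_{L^p}\le \int_{\mathbb{R}^n}\Psi(y)|y|\,\|\nabla f(\cdot-s\tau y)-\nabla f\|_{L^p}\,dy.
\end{equation*}
For $p\in[1,\infty)$, translations are continuous on $L^p$, so $\|\nabla f(\cdot-s\tau y)-\nabla f\|_{L^p}\to 0$ as $\tau\to 0$ for each fixed $s>0$ and $y$; the integrand is dominated by the integrable function $2\Psi(y)|y|\,\|\nabla f\|_{L^p}$ (integrable because $\Psi$ has compact support), so dominated convergence gives $\|h_{s,\tau}\|_{L^p}\to 0$ for each $s\in(0,1]$. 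A second application of dominated convergence in $s$, with the same uniform bound $\|h_{s,\tau}\|_{L^p}\le 2\,\|\,|y|\Psi\,\|_{L^1}\|\nabla f\|_{L^p}$, yields $\int_0^1 \|h_{s,\tau}\|_{L^p}\,ds=o(1)$, hence $\|f*\Psi_\tau-f\|_{L^p}=o(\tau)$.

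The only delicate point is the justification of the pointwise/integral identity at the beginning for generic $f\in W^{1,p}$: I would handle it either by smooth approximation (writing both sides as $L^p(dx)$-valued continuous functions of $f$ with $y$ fixed, and passing to the limit in $C^\infty\cap W^{1,p}\ni f_k\to f$), or by invoking the ACL characterization of $W^{1,p}$. Once that identity is in hand, the rest is bookkeeping with Fubini, Minkowski, and two applications of dominated convergence, with the radiality of $\Psi$ providing the single cancellation that upgrades $O(\tau)$ to $o(\tau)$.
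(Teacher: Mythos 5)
Your proposal is correct and follows essentially the same route as the paper: the fundamental theorem of calculus applied to $f(x-\tau y)-f(x)$, the cancellation $\int\Psi(y)\,y\,dy=0$ coming from the symmetry of $\Psi$, Minkowski's integral inequality, and the continuity of translations in $L^p$. The extra care you take in justifying the pointwise identity for general $f\in W^{1,p}$ by density is a welcome refinement but does not change the argument.
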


\begin{proof} 
First, it is clear that $f* \Psi_\tau \in (C^\infty \cap W^{1.p})(\mathbb{R}^n)$. To show \eqref{eq_app_10_1}, we write
 \begin{align*}
     (f*\Psi_\tau)(x) - f(x) &= \int f(x-y)\Psi_\tau(y)dy-f(x)= \int f(x-\tau y)\Psi(y)dy - f(x) \\
     &= \int [f(x-\tau y)-f(x)]\Psi(y) dy = \int [f(x+\tau y)-f(x)]\Psi(y) dy \\
     &= \int_{\mathbb{R}^n} \bigg(\int_0^1 \frac{d}{dt}f(x+\tau t y)dt\bigg)\Psi(y) dy = \tau \int_{\mathbb{R}^n} \bigg(\int_0^1 \nabla f(x+\tau t y) \cdot y dt\bigg)\Psi(y) dy. 
 \end{align*}
 Since $\Psi$ is even, we have $\int_{\mathbb{R}^n} \Psi(y)y_j dy =0$ for $1\leq j \leq n$, and therefore $\int \nabla f(x) \cdot y \Psi(y)dy =0$. Using this last equality we get
\[
f*\Psi_\tau - f  = \tau \int_{\mathbb{R}^n} \bigg(\int_0^1 (\nabla f(x+\tau t y) -\nabla f(x)\bigg)dt \cdot y\Psi(y)dy.
\]
 Hence by Minkowski's inequality, we get 
 \[
 \|f*\Psi_\tau - f\|_{L^p(\mathbb{R}^n)} \leq \tau \int_{\mathbb{R}^n} |y| |\Psi(y)| \int_0^1 \|\nabla f(\cdot + \tau t y) - \nabla f(\cdot)\|_{L^p(\mathbb{R}^n)} dt dy=o(\tau), 
 \]
 as $\tau \to 0$. In the last inequality we have used that $\|\nabla f (\cdot+h)-\nabla f \|_{L^p(\R^n)}\to 0$ as $h\to 0$, since $\nabla f\in L^p(\R^n)$.  
 \end{proof}

\begin{prop}\label{propA.3}
Let $f\in (W^{1,p}\cap L^\infty)(\mathbb{R}^n), 1 \leq p < \infty$, and let $f_\tau=f* \Psi_\tau \in (C^\infty \cap W^{1.p}\cap L^\infty)(\mathbb{R}^n)$.
We have 
\begin{equation}
\label{eq_app_10_2}
\|f_\tau\|_{L^p}=\mathcal{O}(1), \quad \|\nabla f_\tau\|_{L^p}=\mathcal{O}(1), \quad \|\nabla^2f_\tau\|_{L^p}=o(\tau^{-1}),
\end{equation}
and 
\begin{equation}
\label{eq_app_10_3}
\|\nabla^k f_\tau \|_{L^\infty}=\mathcal{O}(\tau^{-k}), \quad k=0,1,2,\dots,
\end{equation}
as $\tau\to 0$. Here $\nabla^k f_\tau=\sum_{|\alpha|=k}\p^\alpha  f_\tau$.

\end{prop}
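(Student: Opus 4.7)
The plan is to obtain all bounds by straightforward Young's inequality arguments, except for the $o(\tau^{-1})$ bound on $\|\nabla^2 f_\tau\|_{L^p}$, which requires an extra cancellation trick using $\int \nabla \Psi = 0$, analogous to the one already exploited in Proposition \ref{propA.1}.

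First I would handle the $L^\infty$ bounds. By Young's inequality, $\|f_\tau\|_{L^\infty} \le \|f\|_{L^\infty} \|\Psi_\tau\|_{L^1} = \|f\|_{L^\infty}$. For $k\ge 1$, I would differentiate onto the mollifier, $\nabla^k f_\tau = f * \nabla^k \Psi_\tau$, and use the scaling $\nabla^k \Psi_\tau(x) = \tau^{-n-k}(\nabla^k\Psi)(x/\tau)$, so that $\|\nabla^k\Psi_\tau\|_{L^1} = \tau^{-k}\|\nabla^k\Psi\|_{L^1}$. Young then gives $\|\nabla^k f_\tau\|_{L^\infty} \le \|f\|_{L^\infty}\tau^{-k}\|\nabla^k\Psi\|_{L^1} = \mathcal{O}(\tau^{-k})$, establishing \eqref{eq_app_10_3}.

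For the first two $L^p$ bounds in \eqref{eq_app_10_2}, I would put the derivatives on $f$ instead of on $\Psi_\tau$: write $\nabla f_\tau = (\nabla f)*\Psi_\tau$, and apply Young's inequality with $\|\Psi_\tau\|_{L^1}=1$ to get $\|f_\tau\|_{L^p}\le \|f\|_{L^p}$ and $\|\nabla f_\tau\|_{L^p}\le \|\nabla f\|_{L^p}$, both $\mathcal{O}(1)$.

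The main point, and the only nontrivial step, is the $o(\tau^{-1})$ bound on $\|\nabla^2 f_\tau\|_{L^p}$. A direct application of Young with one derivative on each factor only yields $\mathcal{O}(\tau^{-1})$, which is not enough. The plan is to use the cancellation $\int \nabla\Psi(y)\,dy = 0$: write
\begin{equation*}
\nabla^2 f_\tau(x) = (\nabla f)*\nabla\Psi_\tau(x) = \tau^{-1}\int_{\R^n}\big[\nabla f(x-\tau z)-\nabla f(x)\big](\nabla\Psi)(z)\,dz,
\end{equation*}
after the change of variables $y=\tau z$ and subtraction of the vanishing term $\nabla f(x)\int(\nabla\Psi)(z)\,dz$. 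Taking $L^p$ norms and applying Minkowski's integral inequality yields
\begin{equation*}
\|\nabla^2 f_\tau\|_{L^p}\le \tau^{-1}\int_{\R^n}|(\nabla\Psi)(z)|\,\|\nabla f(\cdot-\tau z)-\nabla f(\cdot)\|_{L^p}\,dz.
\end{equation*}
Since $\nabla f\in L^p$ with $p<\infty$, continuity of translations in $L^p$ gives that the integrand tends to $0$ pointwise as $\tau\to 0$, and it is dominated by $2\|\nabla f\|_{L^p}|(\nabla\Psi)(z)|\in L^1(\R^n)$. Dominated convergence then forces the integral to be $o(1)$, and hence $\|\nabla^2 f_\tau\|_{L^p}=o(\tau^{-1})$. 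The only delicate point is verifying the dominated convergence hypothesis, which follows from $\nabla\Psi\in C^\infty_0(\R^n)$ and boundedness of translations on $L^p$.
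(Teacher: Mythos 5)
Your proposal is correct and follows essentially the same route as the paper: Young's inequality for the $L^\infty$ bounds and the first two $L^p$ bounds, and for the second-derivative bound the identity $\partial^\alpha f_\tau=\partial_{x_k}f*\partial_{x_j}\Psi_\tau=\tau^{-1}\int[\partial_{x_k}f(x-\tau y)-\partial_{x_k}f(x)]\partial_{y_j}\Psi(y)\,dy$ (valid since $\int\partial_{y_j}\Psi=0$), followed by Minkowski's integral inequality and continuity of translations in $L^p$. Your explicit verification of the dominated convergence hypothesis is a minor refinement of a step the paper leaves implicit; otherwise the arguments coincide.
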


\begin{proof} The first two estimates in \eqref{eq_app_10_2} are clear in view of Young's inequality. The estimate \eqref{eq_app_10_3} follows from 
\[
\|\nabla^k f_\tau \|_{L^\infty}\le \|f\|_{L^\infty}\|\nabla^k  \Psi_\tau \|_{L^1}=\mathcal{O}(\tau^{-k}),
\]
$k=0,1,2,\dots$. Thus, we only need to prove the third estimate in \eqref{eq_app_10_2}. To that end, we first write for $|\alpha|=2$, 
\[
\p^\alpha f_\tau= \p_{x_k}f* \p_{x_j}\Psi_\tau, 
\]
for some $j,k$. Letting $g=\p_{x_k}f\in L^p$, we get 

 \begin{align*}
     (g*\partial_{x_j}\Psi)(x) &= \int g(y) (\partial_{x_j} \Psi_\tau)(x-y)dy=\tau^{-n-1}\int g(y)(\partial_{x_j}\Psi)\bigg(\frac{x-y}{\tau}\bigg)dy\\
     &= \tau^{-1}\int g(x-\tau y)(\partial_{y_j}\Psi)(y)dy=\tau^{-1} \int [g(x-\tau y)-g(x)]\partial_{y_j}\Psi(y)dy. 
 \end{align*} 
 Thus, by an  application of Minkowski's inequality, we obtain that 
 $$
 \|g*\partial_{x_j}\Psi_\tau\|_{L^p(\mathbb{R}^n)} \leq \tau^{-1} \int \underbrace{\|g(\cdot -\tau y)-g(\cdot)\|_{L^p}}_{\to 0 \text{ as } \tau \to 0} |\partial_{y_j}\Psi(y)|dy=o(\tau^{-1}),
 $$
 as $\tau\to 0$. This completes the proof of the third estimate in \eqref{eq_app_10_2}. 
\end{proof}

\section*{Acknowledgements}

The research of S.S. is partially supported by the National Science Foundation (DMS 2109199).

%\bibliography{refs.bib}

\begin{bibdiv}
    \begin{biblist}
    
    \bib{assylbekov}{article}{
   author={Assylbekov, Yernat M.}*{inverted = {yes}},
   author={Yang, Yang}*{inverted = {yes}},
   title={Determining the first order perturbation of a polyharmonic
   operator on admissible manifolds},
   journal={J. Differential Equations},
   volume={262},
   date={2017},
   number={1},
   pages={590--614},
   issn={0022-0396},
   doi={10.1016/j.jde.2016.09.039},
}
    
    \bib{Aubin}{book}{
   author={Aubin, Thierry}*{inverted = {yes}},
   title={Some nonlinear problems in Riemannian geometry},
   series={Springer Monographs in Mathematics},
   publisher={Springer-Verlag, Berlin},
   date={1998},
   pages={xviii+395},
   isbn={3-540-60752-8},
   doi={10.1007/978-3-662-13006-3},
}

\bib{Bhattacharyya_2018}{article}{
   author={Bhattacharyya, Sombuddha}*{inverted = {yes}},
   title={An inverse problem for the magnetic Schr\"{o}dinger operator on
   Riemannian manifolds from partial boundary data},
   journal={Inverse Probl. Imaging},
   volume={12},
   date={2018},
   number={3},
   pages={801--830},
   issn={1930-8337},
   doi={10.3934/ipi.2018034},
}

\bib{Brezis_book}{book}{
   author={Brezis, Haim}*{inverted = {yes}},
   title={Functional analysis, Sobolev spaces and partial differential
   equations},
   series={Universitext},
   publisher={Springer, New York},
   date={2011},
   pages={xiv+599},
   isbn={978-0-387-70913-0},
}

\bib{Bukhgeim_Uhlmann_2002}{article}{
   author={Bukhgeim, Alexander L.}*{inverted = {yes}},
   author={Uhlmann, Gunther}*{inverted = {yes}},
   title={Recovering a potential from partial Cauchy data},
   journal={Comm. Partial Differential Equations},
   volume={27},
   date={2002},
   number={3-4},
   pages={653--668},
   issn={0360-5302},
   doi={10.1081/PDE-120002868},
}

\bib{BurqDosSantosKrup_2018}{article}{
   author={Burq, Nicolas}*{inverted = {yes}},
   author={Dos Santos Ferreira, David}*{inverted = {yes}},
   author={Krupchyk, Katya}*{inverted = {yes}},
   title={From semiclassical Strichartz estimates to uniform $L^p$ resolvent
   estimates on compact manifolds},
   journal={Int. Math. Res. Not. IMRN},
   date={2018},
   number={16},
   pages={5178--5218},
   issn={1073-7928},
   doi={10.1093/imrn/rnx042},
}

\bib{Cekic_2017}{article}{
   author={Ceki\'{c}, Mihajlo}*{inverted = {yes}},
   title={Calder\'{o}n problem for connections},
   journal={Comm. Partial Differential Equations},
   volume={42},
   date={2017},
   number={11},
   pages={1781--1836},
   issn={0360-5302},
   doi={10.1080/03605302.2017.1390678},
}

\bib{Chanillo_1990}{article}{
   author={Chanillo, Sagun}*{inverted = {yes}},
   title={A problem in electrical prospection and an $n$-dimensional
   Borg-Levinson theorem},
   journal={Proc. Amer. Math. Soc.},
   volume={108},
   date={1990},
   number={3},
   pages={761--767},
   issn={0002-9939},
   doi={10.2307/2047798},
}

\bib{Cheng_Nakamura_Somersalo_2001}{article}{
   author={Cheng, Jin}*{inverted = {yes}},
   author={Nakamura, Gen}*{inverted = {yes}},
   author={Somersalo, Erkki}*{inverted = {yes}},
   title={Uniqueness of identifying the convection term},
   note={Second Japan-Korea Joint Seminar on Inverse Problems and Related
   Topics (Seoul, 2001)},
   journal={Commun. Korean Math. Soc.},
   volume={16},
   date={2001},
   number={3},
   pages={405--413},
   issn={1225-1763},
}

\bib{FrancisChung}{article}{
   author={Chung, Francis J.}*{inverted = {yes}},
   title={A partial data result for the magnetic Schr\"{o}dinger inverse
   problem},
   journal={Anal. PDE},
   volume={7},
   date={2014},
   number={1},
   pages={117--157},
   issn={2157-5045},
   doi={10.2140/apde.2014.7.117},
}

\bib{Chung_2014}{article}{
   author={Chung, Francis J.}*{inverted = {yes}},
   title={Partial data for the Neumann-Dirichlet magnetic Schr\"{o}dinger
   inverse problem},
   journal={Inverse Probl. Imaging},
   volume={8},
   date={2014},
   number={4},
   pages={959--989},
   issn={1930-8337},
   doi={10.3934/ipi.2014.8.959},
}

\bib{Chung_Tzou_2020}{article}{
   author={Chung, Francis J.}*{inverted = {yes}},
   author={Tzou, Leo}*{inverted = {yes}},
   title={Partial data inverse problem with $L^{n/2}$ potentials},
   journal={Trans. Amer. Math. Soc. Ser. B},
   volume={7},
   date={2020},
   pages={97--132},
   doi={10.1090/btran/39},
}

\bib{Rham}{book}{
   author={de Rham, Georges}*{inverted = {yes}},
   title={Differentiable manifolds},
   series={Grundlehren der mathematischen Wissenschaften [Fundamental
   Principles of Mathematical Sciences]},
   volume={266},
   note={Forms, currents, harmonic forms;
   Translated from the French by F. R. Smith;
   With an introduction by S. S. Chern},
   publisher={Springer-Verlag, Berlin},
   date={1984},
   pages={x+167},
   isbn={3-540-13463-8},
   doi={10.1007/978-3-642-61752-2},
}

\bib{unboundedPotential}{article}{
   author={Dos Santos Ferreira, David}*{inverted = {yes}},
   author={Kenig, Carlos E.}*{inverted = {yes}},
   author={Salo, Mikko}*{inverted = {yes}},
   title={Determining an unbounded potential from Cauchy data in admissible
   geometries},
   journal={Comm. Partial Differential Equations},
   volume={38},
   date={2013},
   number={1},
   pages={50--68},
   issn={0360-5302},
   doi={10.1080/03605302.2012.736911},
}

\bib{DKSaU_2009}{article}{
   author={Dos Santos Ferreira, David}*{inverted = {yes}},
   author={Kenig, Carlos E.}*{inverted = {yes}},
   author={Salo, Mikko}*{inverted = {yes}},
   author={Uhlmann, Gunther}*{inverted = {yes}},
   title={Limiting Carleman weights and anisotropic inverse problems},
   journal={Invent. Math.},
   volume={178},
   date={2009},
   number={1},
   pages={119--171},
   issn={0020-9910},
   doi={10.1007/s00222-009-0196-4},
}

\bib{ferreira2006determining}{article}{
   author={Dos Santos Ferreira, David}*{inverted = {yes}},
   author={Kenig, Carlos E.}*{inverted = {yes}},
   author={Sj\"{o}strand, Johannes}*{inverted = {yes}},
   author={Uhlmann, Gunther}*{inverted = {yes}},
   title={Determining a magnetic Schr\"{o}dinger operator from partial Cauchy
   data},
   journal={Comm. Math. Phys.},
   volume={271},
   date={2007},
   number={2},
   pages={467--488},
   issn={0010-3616},
   doi={10.1007/s00220-006-0151-9},
}

\bib{DSKurLassSalo_2016}{article}{
   author={Dos Santos Ferreira, David}*{inverted = {yes}},
   author={Kurylev, Yaroslav}*{inverted = {yes}},
   author={Lassas, Matti}*{inverted = {yes}},
   author={Liimatainen, Tony}*{inverted = {yes}},
   author={Salo, Mikko}*{inverted = {yes}},
   title={The linearized Calder\'{o}n problem in transversally anisotropic
   geometries},
   journal={Int. Math. Res. Not. IMRN},
   date={2020},
   number={22},
   pages={8729--8765},
   issn={1073-7928},
   doi={10.1093/imrn/rny234},
}

\bib{grigoryan}{book}{
   author={Grigor'yan, Alexander}*{inverted = {yes}},
   title={Heat kernel and analysis on manifolds},
   series={AMS/IP Studies in Advanced Mathematics},
   volume={47},
   publisher={American Mathematical Society, Providence, RI; International
   Press, Boston, MA},
   date={2009},
   pages={xviii+482},
   isbn={978-0-8218-4935-4},
   doi={10.1090/amsip/047},
}

\bib{Haberman_2018}{article}{
   author={Haberman, Boaz}*{inverted = {yes}},
   title={Unique determination of a magnetic Schr\"{o}dinger operator with
   unbounded magnetic potential from boundary data},
   journal={Int. Math. Res. Not. IMRN},
   date={2018},
   number={4},
   pages={1080--1128},
   issn={1073-7928},
   doi={10.1093/imrn/rnw263},
}

\bib{Hormander}{book}{
   author={H\"{o}rmander, Lars}*{inverted = {yes}},
   title={The analysis of linear partial differential operators. I},
   series={Springer Study Edition},
   edition={2},
   note={Distribution theory and Fourier analysis},
   publisher={Springer-Verlag, Berlin},
   date={1990},
   pages={xii+440},
   isbn={3-540-52343-X},
   doi={10.1007/978-3-642-61497-2},
}

\bib{Isakov_2007}{article}{
   author={Isakov, Victor}*{inverted = {yes}},
   title={On uniqueness in the inverse conductivity problem with local data},
   journal={Inverse Probl. Imaging},
   volume={1},
   date={2007},
   number={1},
   pages={95--105},
   issn={1930-8337},
   doi={10.3934/ipi.2007.1.95},
}

\bib{Kenig_Salo_2014_review}{article}{
   author={Kenig, Carlos}*{inverted = {yes}},
   author={Salo, Mikko}*{inverted = {yes}},
   title={Recent progress in the Calder\'{o}n problem with partial data},
   conference={
      title={Inverse problems and applications},
   },
   book={
      series={Contemp. Math.},
      volume={615},
      publisher={Amer. Math. Soc., Providence, RI},
   },
   date={2014},
   pages={193--222},
   doi={10.1090/conm/615/12245},
}

\bib{Kenig_Salo_2013}{article}{
   author={Kenig, Carlos}*{inverted = {yes}},
   author={Salo, Mikko}*{inverted = {yes}},
   title={The Calder\'{o}n problem with partial data on manifolds and
   applications},
   journal={Anal. PDE},
   volume={6},
   date={2013},
   number={8},
   pages={2003--2048},
   issn={2157-5045},
   doi={10.2140/apde.2013.6.2003},
}

\bib{Kenig_Sjostrand_Uhlmann_2007}{article}{
   author={Kenig, Carlos E.}*{inverted = {yes}},
   author={Sj\"{o}strand, Johannes}*{inverted = {yes}},
   author={Uhlmann, Gunther}*{inverted = {yes}},
   title={The Calder\'{o}n problem with partial data},
   journal={Ann. of Math. (2)},
   volume={165},
   date={2007},
   number={2},
   pages={567--591},
   issn={0003-486X},
   doi={10.4007/annals.2007.165.567},
}

\bib{KnudsenSalo}{article}{
   author={Knudsen, Kim}*{inverted = {yes}},
   author={Salo, Mikko}*{inverted = {yes}},
   title={Determining nonsmooth first order terms from partial boundary
   measurements},
   journal={Inverse Probl. Imaging},
   volume={1},
   date={2007},
   number={2},
   pages={349--369},
   issn={1930-8337},
   doi={10.3934/ipi.2007.1.349},
}

\bib{krupchykManifold}{article}{
   author={Krupchyk, Katya}*{inverted = {yes}},
   author={Uhlmann, Gunther}*{inverted = {yes}},
   title={Inverse problems for advection diffusion equations in admissible geometries},
   journal={Comm. Partial Differential Equations},
   volume={43},
   date={2018},
   number={4},
   pages={585--615},
   issn={0360-5302},
   doi={10.1080/03605302.2018.1446163},
}

\bib{2018}{article}{
   author={Krupchyk, Katya}*{inverted = {yes}},
   author={Uhlmann, Gunther}*{inverted = {yes}},
   title={Inverse problems for magnetic Schr\"{o}dinger operators in
   transversally anisotropic geometries},
   journal={Comm. Math. Phys.},
   volume={361},
   date={2018},
   number={2},
   pages={525--582},
   issn={0010-3616},
   doi={10.1007/s00220-018-3182-0},
}

\bib{3/2derivatives}{article}{
   author={Krupchyk, Katya}*{inverted = {yes}},
   author={Uhlmann, Gunther}*{inverted = {yes}},
   title={The Calder\'{o}n problem with partial data for conductivities with 3/2
   derivatives},
   journal={Comm. Math. Phys.},
   volume={348},
   date={2016},
   number={1},
   pages={185--219},
   issn={0010-3616},
   doi={10.1007/s00220-016-2666-z},
}

\bib{UhlmannKatya2014}{article}{
   author={Krupchyk, Katsiaryna}*{inverted = {yes}},
   author={Uhlmann, Gunther}*{inverted = {yes}},
   title={Uniqueness in an inverse boundary problem for a magnetic
   Schr\"{o}dinger operator with a bounded magnetic potential},
   journal={Comm. Math. Phys.},
   volume={327},
   date={2014},
   number={3},
   pages={993--1009},
   issn={0010-3616},
   doi={10.1007/s00220-014-1942-z},
}

\bib{mclean}{article}{
   author={McLean, William}*{inverted = {yes}},
   title={H\"{o}lder estimates for the Cauchy integral on a Lipschitz contour},
   journal={J. Integral Equations Appl.},
   volume={1},
   date={1988},
   number={3},
   pages={435--451},
   issn={0897-3962},
   doi={10.1216/JIE-1988-1-3-435},
}

\bib{Lavine_Nachman}{article}{
   author={Nachman, Adrian I.}*{inverted = {yes}},
   title={Inverse scattering at fixed energy},
   conference={
      title={Mathematical physics, X},
      address={Leipzig},
      date={1991},
   },
   book={
      publisher={Springer, Berlin},
   },
   date={1992},
   pages={434--441},
   doi={10.1007/978-3-642-77303-7-48},
}

\bib{Nakamura1995GlobalIF}{article}{
   author={Nakamura, Gen}*{inverted = {yes}},
   author={Sun, Zi Qi}*{inverted = {yes}},
   author={Uhlmann, Gunther}*{inverted = {yes}},
   title={Global identifiability for an inverse problem for the Schr\"{o}dinger
   equation in a magnetic field},
   journal={Math. Ann.},
   volume={303},
   date={1995},
   number={3},
   pages={377--388},
   issn={0025-5831},
   doi={10.1007/BF01460996},
}

\bib{Panchenko_2002}{article}{
   author={Panchenko, Alexander}*{inverted = {yes}},
   title={An inverse problem for the magnetic Schr\"{o}dinger equation and
   quasi-exponential solutions of nonsmooth partial differential equations},
   journal={Inverse Problems},
   volume={18},
   date={2002},
   number={5},
   pages={1421--1434},
   issn={0266-5611},
   doi={10.1088/0266-5611/18/5/314},
}

\bib{Pohjola_2015}{article}{
   author={Pohjola, Valter}*{inverted = {yes}},
   title={A uniqueness result for an inverse problem of the steady state
   convection-diffusion equation},
   journal={SIAM J. Math. Anal.},
   volume={47},
   date={2015},
   number={3},
   pages={2084--2103},
   issn={0036-1410},
   doi={10.1137/140970926},
}

\bib{PMRuizT_2022}{article}{
   author={Potenciano-Machado, Leyter}*{inverted = {yes}},
   author={Ruiz, Alberto}*{inverted = {yes}},
   author={Tzou, Leo}*{inverted = {yes}},
   title={Stability estimates for the magnetic Schr\"{o}dinger operator with
   partial measurements},
   journal={J. Differential Equations},
   volume={321},
   date={2022},
   pages={475--521},
   issn={0022-0396},
   doi={10.1016/j.jde.2022.02.051},
}

\bib{caseyRodriguez}{article}{
   author={Rodriguez, Casey}*{inverted = {yes}},
   title={A partial data result for less regular conductivities in
   admissible geometries},
   journal={Inverse Probl. Imaging},
   volume={10},
   date={2016},
   number={1},
   pages={247--262},
   issn={1930-8337},
   doi={10.3934/ipi.2016.10.247},
}

\bib{Salo2004InversePF}{article}{
   author={Salo, Mikko}*{inverted = {yes}},
   title={Inverse problems for nonsmooth first order perturbations of the
   Laplacian},
   note={Dissertation, University of Helsinki, Helsinki, 2004},
   journal={Ann. Acad. Sci. Fenn. Math. Diss.},
   number={139},
   date={2004},
   pages={67},
   issn={1239-6303},
}	

\bib{Sjostrand_2014}{article}{
   author={Sj\"{o}strand, Johannes}*{inverted = {yes}},
   title={Weyl law for semi-classical resonances with randomly perturbed
   potentials},
   language={English, with English and French summaries},
   journal={M\'{e}m. Soc. Math. Fr. (N.S.)},
   number={136},
   date={2014},
   pages={vi+144},
   issn={0249-633X},
   isbn={978-2-85629-780-3},
   doi={10.24033/msmf.446},
}

\bib{Sogge_book}{book}{
   author={Sogge, Christopher D.}*{inverted = {yes}},
   title={Fourier integrals in classical analysis},
   series={Cambridge Tracts in Mathematics},
   volume={210},
   edition={2},
   publisher={Cambridge University Press, Cambridge},
   date={2017},
   pages={xiv+334},
   isbn={978-1-107-12007-5},
   doi={10.1017/9781316341186},
}

\bib{sun}{article}{
   author={Sun, Zi Qi}*{inverted = {yes}},
   title={An inverse boundary value problem for the Schr\"{o}dinger operator
   with vector potentials in two dimensions},
   journal={Comm. Partial Differential Equations},
   volume={18},
   date={1993},
   number={1-2},
   pages={83--124},
   issn={0360-5302},
   doi={10.1080/03605309308820922},
}

\bib{SunUhlmann}{article}{
   author={Sun, Zi Qi}*{inverted = {yes}},
   author={Uhlmann, Gunther}*{inverted = {yes}},
   title={Recovery of singularities for formally determined inverse
   problems},
   journal={Comm. Math. Phys.},
   volume={153},
   date={1993},
   number={3},
   pages={431--445},
   issn={0010-3616},
}

\bib{Sylvester_Uhlmann_1987}{article}{
   author={Sylvester, John}*{inverted = {yes}},
   author={Uhlmann, Gunther}*{inverted = {yes}},
   title={A global uniqueness theorem for an inverse boundary value problem},
   journal={Ann. of Math. (2)},
   volume={125},
   date={1987},
   number={1},
   pages={153--169},
   issn={0003-486X},
   doi={10.2307/1971291},
}

\bib{Tolmasky}{article}{
   author={Tolmasky, Carlos F.}*{inverted = {yes}},
   title={Exponentially growing solutions for nonsmooth first-order
   perturbations of the Laplacian},
   journal={SIAM J. Math. Anal.},
   volume={29},
   date={1998},
   number={1},
   pages={116--133},
   issn={0036-1410},
   doi={10.1137/S0036141096301038},
}

\bib{Tzou_preprint}{article}{
   author={Tzou, Leo}*{inverted = {yes}},
   title={Partial Data CalderÃ³n Problems for $L^{\frac{n}{2}}$ Potentials on Admissible Manifolds},
   eprint = {https://arxiv.org/abs/1805.09161},
   date={2018}
}

\bib{Tzou}{article}{
   author={Tzou, Leo}*{inverted = {yes}},
   title={Stability estimates for coefficients of magnetic Schr\"{o}dinger
   equation from full and partial boundary measurements},
   journal={Comm. Partial Differential Equations},
   volume={33},
   date={2008},
   number={10-12},
   pages={1911--1952},
   issn={0360-5302},
   doi={10.1080/03605300802402674},
}

\bib{Uhlmann_2014_review}{article}{
   author={Uhlmann, Gunther}*{inverted = {yes}},
   title={Inverse problems: seeing the unseen},
   journal={Bull. Math. Sci.},
   volume={4},
   date={2014},
   number={2},
   pages={209--279},
   issn={1664-3607},
   doi={10.1007/s13373-014-0051-9},
}

    \end{biblist}
    \end{bibdiv}

\end{document}